\renewcommand{\tocappendix}[3]{%
  \indentlabel{\IfStrEq{#3}{Bibliography}{}{#1}\@ifnotempty{#2}{ #2.\quad}}#3}
\DeclareMathOperator{\Aut}{Aut}
\newcommand{\bp}{p}
\newcommand{\bq}{q}
\newcommand{\dox}[1]{\|#1\|}
\newcommand{\dogo}[1]{\|#1\|}
\numberwithin{figure}{section}% There seems to be a bug with doing anything other than this.
\theoremstyle{definition}
\begin{document}

%\title[Geometry and dynamics in hyperbolic metric spaces]{Geometry and dynamics in Gromov hyperbolic metric spaces I \\ {\small{\textnormal{With an emphasis on non-proper settings}}}}

\title[Tukia's isomorphism theorem in CAT($-1$) spaces]{
\uppercase{
\large Tukia's isomorphism theorem in CAT($-1$) spaces
}
}

\keywords{CAT(-1) spaces, rank one symmetric spaces of noncompact type, Kleinian groups, rigidity theorems, geometrically finite groups, Hausdorff dimension, quassisymmetric maps}

\subjclass[2010]{Primary 20F69, 53C24; Secondary 20F67, 20H10, 30F40}

\authortushar\authordavid\authormariusz

\begin{Abstract}
We prove a generalization of Tukia's ('85) isomorphism theorem, which states that any isomorphism between two geometrically finite groups extends equivariantly to a quasisymmetric homeomorphism between their limit sets. Tukia worked in the setting of real hyperbolic spaces of finite dimension, and his theorem cannot be generalized as stated to the setting of CAT($-1$) spaces. We exhibit examples of type-preserving isomorphisms of geometrically finite subgroups of finite-dimensional rank one symmetric spaces of noncompact type (ROSSONCTs) whose boundary extensions are not quasisymmetric. A sufficient condition for a type-preserving isomorphism to extend to a quasisymmetric equivariant homeomorphism between limit sets is that one of the groups in question is a lattice, and that the underlying base fields are the same, or if they are not the same then the base field of the space on which the lattice acts has the larger dimension. This in turn leads to a generalization of a rigidity theorem of Xie ('08) to the setting of finite-dimensional ROSSONCTs.
\end{Abstract}

\maketitle

%\tableofcontents

%%%%%%%%%%%%%%%%%%%%%%%%%%%%%%%%%%
\draftnewpage
\section{Introduction}
%%%%%%%%%%%%%%%%%%%%%%%%%%%%%%%%%%

Tukia's isomorphism theorem \cite[Theorem 3.3]{Tukia2} states that any type-preserving isomorphism $\Phi$ between two geometrically finite subgroups of $\Isom(\H^d)$ (not necessarily the same $d$ for both groups) extends to a quasisymmetric equivariant homeomorphism between their limit sets. In this note we prove a generalization of this theorem to the setting of CAT(-1) spaces, paying particular attention to the case of rank one symmetric spaces of noncompact type (ROSSONCTs). However, our theorem cannot be stated as the naive (word-for-word) generalization of Tukia's theorem, since such a generalization is false (cf. Example \ref{exampletukia} and Remark \ref{remarktukia}). Instead, we establish sufficient conditions on a type-preserving isomorphism $\Phi$ between two geometrically finite subgroups of $\Isom(X)$, where $X$ is a CAT(-1) space, in order for the conclusion of Tukia's theorem to hold.

\begin{convention}
\label{conventionimplied}
The symbols $\lesssim$, $\gtrsim$, and $\asymp$ will denote asymptotics; a subscript of $\plus$ indicates that the asymptotic is additive, and a subscript of $\times$ indicates that it is multiplicative. For example, $A\lesssim_{\times,K} B$ means that there exists a constant $C > 0$ (the \emph{implied constant}\label{pageimpliedconstant}), depending only on $K$, such that $A\leq C B$. Moreover, $A\lesssim_{\plus,\times}B$ means that there exist constants $C_1,C_2 > 0$ so that $A\leq C_1 B + C_2$.
%In general, dependence of the implied constant(s) on universal objects such as the metric space $X$, the group $G$, and the distinguished point $\zero\in X$ (cf. Notation \ref{standingassumptions}) will be omitted from the notation.
\end{convention}

\begin{convention}
The symbol $\triangleleft$ will be used to indicate the end of a nested proof.
\end{convention}

\begin{convention}
Given a distinguished point $\zero\in X$, we write
\begin{align*}
\dox x = \dist(\zero,x) \text{ and } \dogo g = \dox{g(\zero)}.
\end{align*}
\end{convention}

\begin{convention}
We denote the maximum of two numbers $A,B$ by $A\vee B$.
\end{convention}

{\bf Acknowledgements.} The research of the first-named author was supported in part by a 2014-2015 Faculty Research Grant from the University of Wisconsin--La Crosse. The research of the second-named author was supported in part by the EPSRC Programme Grant EP/J018260/1. The research of the third-named author was supported in part by the NSF grant DMS-1361677. The authors thank Marc Bourdon, Pierre Py, and the anonymous referee for helpful remarks and suggestions.

\subsection{Geometrically finite groups}
It was known for a long time that every finitely generated Fuchsian group has a finite-sided convex fundamental domain (e.g. \cite[Theorem 4.6.1]{Katok_book}). This result does not generalize beyond two dimensions (e.g. \cite{Bers, Jorgensen}), but subgroups of $\Isom(\H^3)$ with finite-sided fundamental domains came to be known as \emph{geometrically finite} groups. Several equivalent definitions of geometrical finiteness in the three-dimensional setting became known, for example Beardon and Maskit's condition that the limit set is the union of the radial limit set $\Lr$ with the set $\Lbp$ of bounded parabolic points \cite{BeardonMaskit}, but the situation in higher dimensions was somewhat murky until Bowditch \cite{Bowditch_geometrical_finiteness} wrote a paper that described which equivalences remain true in higher dimensions, and which do not. The condition of a finite-sided convex fundamental domain is no longer equivalent to any other conditions in higher dimensions (e.g. \cite{Apanasov}), so a higher-dimensional Kleinian group is said to be \emph{geometrically finite} if it satisfies any of Bowditch's five equivalent conditions (GF1)-(GF5). Four of Bowditch's equivalent conditions make sense in the more general setting of pinched Hadamard manifolds \cite{Bowditch_geometrical_finiteness2}, but when the generality is extended further to CAT(-1) spaces, only conditions (GF1) and (GF2) remain equivalent (cf. \cite[Remark 12.4.6]{DSU}). Groups acting on proper CAT(-1) spaces and satisfying equivalent conditions (GF1) and (GF2) were first called geometrically finite by Roblin \cite[\61F]{Roblin1}; the authors of the current paper removed the properness assumption in  \cite[\612]{DSU}, where we gave the following definition:

\begin{definition}[{\cite[Definition 12.4.1]{DSU}}; cf. {\cite[Proposition 1.10(iii)]{Roblin1}}]
\label{definitionGF}
Let $X$ be a CAT(-1) space. A group $G\leq\Isom(X)$ is said to be \emph{strongly discrete} if for all $\rho > 0$,
\[
\#\{g\in G : \|g\| \leq \rho\} < \infty.
\]
A strongly discrete group $G$ is said to be \emph{geometrically finite} if there exists a disjoint $G$-invariant collection of horoballs $\scrH$ and a basepoint $\zero\notin\bigcup\scrH$ such that
\begin{itemize}
\item[(I)] for every $\rho > 0$, the set
\[
\scrH_\rho := \{ H\in\scrH : \dist(\zero,H) \leq \rho \}
\]
is finite, and
\item[(II)] there exists $\sigma > 0$ such that
\[
\CC_\zero \subset G(B(\zero,\sigma)) \cup \bigcup\scrH,
\]
where $\CC_\zero$ denotes the quasiconvex core of $G$ (with respect to the basepoint $\zero\in X$); cf. Definition \ref{definitionconvexhull}.
\end{itemize}
\end{definition}

\begin{remark*}
The notion of geometrical finiteness is closely linked with the notion of \emph{relative hyperbolicity}, a concept introduced by Gromov in \cite{Gromov3} and subsequently studied intensively by the geometric group theory community; for recent advances see \cite{Bowditch_relatively_hyperbolic, Osin2, Yaman}. To be precise, if $G$ is a geometrically finite group whose maximal parabolic subgroups are finitely generated,\Footnote{It is possible that the arguments of \cite{Osin2} might be able to remove the hypothesis that the maximal parabolic subgroups are finitely generated. In any case, this hypothesis is satisfied when the space in question is a finite-dimensional ROSSONCT, which is our main example in this paper.\label{footnoteosin}} then $G$ is hyperbolic relative to the collection $\{\Stab(G;p) : p\in\Lbp(G)\}$, where $\Lbp(G)$ denotes the set of bounded parabolic points of $G$ and $\Stab(G;p)$ denotes the stabilizer of $p$ in $G$ (cf. Lemma \ref{lemmarelativelyhyperbolic}).
\end{remark*}

\subsection{Main results}
We now introduce the terminology necessary to state our main theorem.

\begin{definition}
\label{definitiontypepreserving}
An isomorphism between two groups acting on CAT(-1) spaces is \emph{type-preserving} if the image of a loxodromic (resp. parabolic, elliptic) isometry is loxodromic (resp. parabolic, elliptic). (For the definitions of loxodromic, parabolic, and elliptic isometries, see Definition \ref{definitionclassification1} below.)
\end{definition}

\begin{definition}
\label{definitionquasisymmetric}
Let $(Z,\Dist)$ and $(\w Z,\w\Dist)$ be metric spaces. A homeomorphism $\phi:Z\to\w Z$ is said to be \emph{quasisymmetric} if there exists an increasing homeomorphism $f:(0,\infty)\to(0,\infty)$ such that
\[
\frac{\w\Dist(\phi(z),\phi(y))}{\w\Dist(\phi(z),\phi(x))} \leq f\left(\frac{\Dist(z,y)}{\Dist(z,x)}\right) \all x,y,z\in Z.
\]
\end{definition}

\begin{theorem}[Generalization of Tukia's isomorphism theorem]
\label{theoremtukia}
Let $X$, $\w X$ be CAT(-1) spaces,\Footnote{Or more generally, regularly geodesic strongly hyperbolic metric spaces; cf. \cite[Definitions 3.3.6 and 4.4.5]{DSU}.} let $G\leq\Isom(X)$ and $\w G\leq\Isom(\w X)$ be two geometrically finite groups, and let $\Phi:G\to \w G$ be a type-preserving isomorphism. Let $P$ be a complete set of inequivalent parabolic points for $G$.
\begin{itemize}
\item[(i)] If for every $\bp\in P$ we have
\begin{equation}
\label{tukia}
\dogo{\Phi(h)} \asymp_{\plus,\times,\bp} \dogo h \all h\in G_\bp,
\end{equation}
then there is an equivariant homeomorphism between $\Lambda := \Lambda(G)$ and $\w\Lambda := \Lambda(\w G)$.
\item[(ii)] If for every $\bp\in P$ there exists $\alpha_\bp > 0$ such that
\begin{equation}
\label{tukia2}
\dogo{\Phi(h)}  \asymp_{\plus,\bp} \alpha_\bp \dogo h \all h\in G_\bp,
\end{equation}
then the homeomorphism of \text{(i)} is quasisymmetric.
%The equivariant homeomorphism of \text{(i)} is quasisymmetric if and only if there exists a function $f_\bp:\Rplus\to\Rplus$ such that
%\begin{repequation}{tukia2}
%\|\Phi(h)\|  \asymp_{\plus,\bp} f_\bp(\ell_0(h)) \all h\in G_\bp
%\end{repequation}
%and there exist constants $0 < c < C < \infty$ such that
%\[
%f(\rho_1) + c\rho_2 + \leq f_\bp(\rho_1 + \rho_2) \leq f(\rho_1) + C\rho_2 \all \rho_1,\rho_2 > 0.
%\]
\end{itemize}
\end{theorem}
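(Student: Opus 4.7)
The plan is to imitate Tukia's strategy, adapted to the CAT($-1$) setting: first construct an equivariant boundary map $\phi \colon \Lambda \to \w\Lambda$ from the isomorphism $\Phi$, show it is a homeomorphism under (\ref{tukia}), and then upgrade to quasisymmetry under the sharper condition (\ref{tukia2}). Since $\Phi$ is type-preserving, the image $\Phi(G_\bp)$ of each parabolic stabilizer is itself parabolic and therefore fixes a unique boundary point $\w\bp \in \partial\w X$. I would define $\phi$ on the parabolic orbits by $\phi(g(\bp)) := \Phi(g)(\w\bp)$, and on a radial limit point $\xi$ by $\phi(\xi) := \lim_n \Phi(g_n)(\w\zero)$ for any sequence $(g_n(\zero))$ approaching $\xi$ inside a bounded neighborhood of the ray $[\zero,\xi]$.

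For part (i), the task is to prove $\phi$ is well-defined, continuous, and equivariant. The strategy is to show that $g(\zero) \mapsto \Phi(g)(\w\zero)$ extends to a quasi-isometric embedding of the quasiconvex core $\CC_\zero$ into $\w X$. By Definition \ref{definitionGF}(II), $\CC_\zero$ is covered by a bounded neighborhood of $G(\zero)$ together with $\bigcup\scrH$; on the orbit part, strong discreteness yields bi-Lipschitz control, and on each horoball $H_\bp$ one parametrizes points by $G_\bp$-cosets and horospherical depth. The two-sided multiplicative asymptotic (\ref{tukia}) then translates into $\Phi$ sending horoballs to horoballs with additive control on the Busemann function, so that the combined map on $\CC_\zero$ is a quasi-isometric embedding. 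Applying the standard extension of quasi-isometric embeddings between Gromov hyperbolic spaces to their boundaries, together with the symmetric argument for $\Phi^{-1}$, then yields the equivariant homeomorphism between $\Lambda$ and $\w\Lambda$.

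For part (ii), the sharper condition (\ref{tukia2}) pins down a precise multiplicative constant $\alpha_\bp$ on each parabolic subgroup, which at the boundary forces $\phi$ to behave locally like a power map with exponent $\alpha_\bp$ near each $\bp$. A shadow lemma in the CAT($-1$) setting relates visual diameters of shadows of orbit points $g(\zero)$ to $e^{-\dogo g}$, and of horoball cross-sections at depth $t$ to $e^{-t}$; under $\phi$ these are mapped to the corresponding shadows for the $\w G$-action. Condition (\ref{tukia2}) guarantees that a shadow at depth $t$ near $\bp$ is sent to a shadow at depth $\alpha_\bp t$ (up to additive error) near $\w\bp$, while away from cusps strong discreteness preserves depth up to bounded error. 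Since $P$ is finite, these local power-law estimates, with only finitely many distinct exponents $\alpha_\bp$, assemble into uniform quasisymmetric control on $\phi$.

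The principal obstacle is the interaction between the algebraic distortion of $\Phi$ on parabolic subgroups and the horospherical geometry. In CAT($-1$) spaces, parabolic subgroups need not be virtually abelian---in complex hyperbolic space they can be Heisenberg-type---so translating (\ref{tukia}) and (\ref{tukia2}) into statements about horospherical displacement requires careful horofunction estimates drawn from \cite{DSU}. Moreover, the fact that $\alpha_\bp$ may differ across parabolic orbits means that $\phi$ is non-Lipschitz at parabolic points with possibly different power laws at different points; verifying that this local behavior still assembles into a single globally quasisymmetric map, rather than merely a quasi-M\"obius or H\"older map, is the subtlest step of the argument.
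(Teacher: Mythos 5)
There is a genuine gap in your argument for part (i), and it is not a technical one: the strategy you propose proves too much. You claim that under \eqref{tukia} alone, the map $g(\zero)\mapsto\Phi(g)(\zero)$ extends to a quasi-isometric embedding of the quasiconvex core $\CC_\zero$ into $\w X$ (by parametrizing horoballs via $G_\bp$-cosets and horospherical depth), and you then extract the boundary map from the standard boundary-extension theorem for quasi-isometric embeddings. But that extension theorem produces a \emph{quasisymmetric} boundary map, so your argument would show that \eqref{tukia} alone already implies quasisymmetry --- which is false, by Example \ref{exampletukia} and Remark \ref{remarktukia}. The failure is concrete: for a Heisenberg-type parabolic subgroup $G_\bp$, the central elements satisfy $\dogo{h^n}\asymp_\plus\log n$ while horizontal elements satisfy $\dogo{h^n}\asymp_\plus 2\log n$, and an isomorphism can interchange these roles; then no choice of rescaling of the depth parameter $t$ along the rays $\geo{h(\zero)}{\bp}$ makes the induced map on the cusp region coarsely distance-preserving (with the depth scaled by $\alpha$, points at bounded distance in the source acquire distance $\asymp(2-\alpha)\log n$ in the target along the central direction, while distance $\asymp\log n$ collapses to $0$ along the horizontal direction when $\alpha=2$). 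So \eqref{tukia} does \emph{not} give ``additive control on the Busemann function'' in the sense you need. The actual proof of (i) must avoid extending the map into the horoballs altogether: one shows instead that Gromov products of \emph{orbit points} are preserved up to additive \emph{and multiplicative} constants, by representing $g_1^{-1}g_2$ as a word in a weighted Cayley generating set containing the parabolic subgroups (Theorem \ref{theoremGF}), checking that the associated broken geodesic is a uniform quasigeodesic, and invoking the Morse lemma. Multiplicative distortion of Gromov products still sends Gromov sequences to Gromov sequences, hence yields a homeomorphism of limit sets, but correctly fails to yield quasisymmetry.

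Your part (ii) is closer in spirit to a workable argument, but it takes a different route from the one that actually closes the problem, and its key step is unproven. The assertion that ``local power-law estimates with finitely many exponents $\alpha_\bp$ assemble into uniform quasisymmetric control'' is precisely the subtle point you flag at the end, and a shadow-lemma bookkeeping at the boundary does not obviously deliver it (nor does ``strong discreteness preserves depth away from cusps'' --- the comparison between $G(\zero)$ and $\w G(\zero)$ requires the weighted Cayley quasi-isometry, not just discreteness). The clean way through is to work upstairs: under \eqref{tukia2} the rescaling $\geo{h(\zero)}{\bp}_t\mapsto\geo{\Phi(h)(\zero)}{\w\bp}_{\alpha_\bp t}$ \emph{is} a quasi-isometry on each cusp region, these glue with the orbit comparison to a quasi-isometry between cobounded subsets of $\CC_\zero$ and $\w\CC_\zero$, and the quasisymmetry of the boundary map then follows in one stroke from the Bonk--Schramm extension theorem. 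I recommend you rework (i) from scratch along the Gromov-product/Morse-lemma line, and in (ii) replace the boundary shadow analysis by the explicit cusp-region quasi-isometry.
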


\begin{remark*}
It was pointed out to us by the referee that if the maximal parabolic subgroups of $G$ are assumed to be finitely generated, then part (i) can be proven using results of Yaman \cite{Yaman}, in a way that does not require the use of assumption \eqref{tukia}. (For details see Theorem \ref{theoremtukiaappendix}.) Since these arguments are quite different from ours, this means that there are two independent proofs of Theorem \ref{theoremtukia}(i) in this case.

In any case, the quasisymmetry result of Theorem \ref{theoremtukia}(ii) appears to be new. It is this result which properly generalizes Tukia's isomorphism theorem, which is also a quasisymmetry result. Moreover, having a quasisymmetry rather than just a homeomorphism is important for certain applications, e.g. when proving rigidity theorems; cf. the remarks preceding Example \ref{exampletukia}.
\end{remark*}

Tukia's isomorphism theorem \cite[Theorem 3.3]{Tukia2} corresponds to the case of Theorem \ref{theoremtukia} where $X$ and $\w X$ are finite-dimensional real ROSSONCTs. Note that in this case, the hypothesis \eqref{tukia2} always holds with $\alpha_\bp = 1$ (Corollary \ref{corollaryrealROSSONCTquasi}; see also \cite[Theorem 5.4.3]{Ratcliffe}). This is why Tukia's original theorem does not need to mention the condition \eqref{tukia2}.
%, due to the structure theorem for parabolic subgroups of $\Isom(X)$ \cite[Theorem 5.4.3]{Ratcliffe}.

It is natural to ask in what circumstances the assumptions \eqref{tukia} and/or \eqref{tukia2} hold. In the case of finite-dimensional nonreal ROSSONCTs, this question is partially answered by the following theorem:

\begin{theorem}
\label{theoremtukiaROSSONCT}
Let $X$ and $\w X$ be finite-dimensional ROSSONCTs, let $G\leq\Isom(X)$ and $\w G\leq\Isom(\w X)$ be geometrically finite groups, and let $\Phi:G\to \w G$ be a type-preserving isomorphism. Then \eqref{tukia} holds. Moreover, suppose that
\begin{itemize}
\item[(I)] $G$ is a lattice, and
\item[(II)] if $\F$ and $\w\F$ are the underlying base fields of $X$ and $\w X$, respectively, then $\dim_\R(\F) \geq \dim_\R(\w\F)$.
\end{itemize}
Then \eqref{tukia2} holds.
\end{theorem}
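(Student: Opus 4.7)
The strategy is to reduce both statements to an analysis of $\Phi$ on each parabolic stabilizer $G_\bp$. By type-preservation together with the standard correspondence of maximal parabolic subgroups under isomorphisms of relatively hyperbolic groups, $\Phi(G_\bp) \leq \w G_{\w\bp}$ for some uniquely determined $\w\bp \in \partial\w X$, and the assignment $\bp \mapsto \w\bp$ yields a bijection between complete sets of inequivalent parabolic points. Both conclusions thus amount to comparing $\dogo{h}$ and $\dogo{\Phi(h)}$ for $h \in G_\bp$.

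For \eqref{tukia}: In a finite-dimensional ROSSONCT $X$ with base field $\F$, the $\Isom(X)$-stabilizer of a point of $\partial X$ has the structure $K \ltimes N$, with $K$ compact and $N$ the simply connected Heisenberg-type nilpotent Lie group determined by $\F$. Consequently $G_\bp$ is finitely generated and virtually nilpotent. The displacement function satisfies the standard coarse estimate
\[
\dogo{h} \asymp_{\plus} 2\log\|h\|_{\mathrm{Cyg}},
\]
where $\|\cdot\|_{\mathrm{Cyg}}$ is the Cygan (Carnot--Carath\'eodory) norm on $N$, and on any finitely generated discrete subgroup of $N$ the Cygan norm is polynomially equivalent to the word length. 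By Gromov's polynomial growth theorem together with the Bass--Guivarc'h growth formula, any isomorphism between two finitely generated virtually nilpotent groups is a quasi-isometry of word metrics. Applied to $\Phi|_{G_\bp}$ and chained through the $\log$ relations above, this yields \eqref{tukia}.

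For \eqref{tukia2} under hypotheses (I)--(II): Hypothesis (I) implies that $G_\bp$ is a cocompact lattice in $K \ltimes N$, so $\Gamma_\bp := G_\bp \cap N$ is a cocompact lattice in the simply connected nilpotent Lie group $N$. Mal'cev's rigidity theorem then extends $\Phi|_{\Gamma_\bp}$ uniquely to a continuous Lie group homomorphism $\w\phi : N \to \w N$ (the image lying in $\w N$ up to absorption of a bounded compact-group correction into $\w K$). Hypothesis (II), $\dim_\R \F \geq \dim_\R \w\F$, is then invoked to pin down the structure of $\w\phi$: the dimensional inequality restricts the possible graded Lie algebra embeddings between the Heisenberg-type algebras so that $\w\phi$ must scale the horizontal layer conformally by a single factor $\beta_\bp > 0$ (and, by compatibility with the bracket, scales the center by $\beta_\bp^2$). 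It follows that $\|\w\phi(h)\|_{\mathrm{Cyg}} \asymp_{\times} \beta_\bp\|h\|_{\mathrm{Cyg}}$, and applying $2\log$ converts this multiplicative scaling into the additive asymptotic $\dogo{\Phi(h)} \asymp_{\plus,\bp} \alpha_\bp \dogo{h}$ for a suitable $\alpha_\bp > 0$, completing \eqref{tukia2}.

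The principal obstacle is the conformality step: extracting uniform horizontal scaling of $\w\phi$ from hypothesis (II). Without the dimensional inequality, graded Lie algebra embeddings may have horizontal components that are linear but non-conformal, producing Cygan-norm ratios that depend on direction and thereby destroy the additive asymptotic required for \eqref{tukia2}. I expect that dealing with this will require a case analysis of Heisenberg-type Lie algebras over the four possible base fields and their graded homomorphisms, showing that when the source's base field has at least the target's real dimension, the rigidity of the bracket structure forces the horizontal embedding into the conformal class; this can likely be packaged as a Pansu-type rigidity lemma tailored to the cross-field setting.
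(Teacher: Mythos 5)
Your reduction to the parabolic stabilizers and your argument for \eqref{tukia} are essentially the paper's: the displacement $\dogo{n(a,\vv)} \asymp_\plus 0\vee 2\log\|\vv\|\vee\log|a|$ together with polynomial equivalence of this quantity's exponential with the word metric (the paper's Lemma \ref{lemmaCROSSONCTquasi}(i), proved there via Zariski closures and a distortion estimate rather than merely asserted) gives \eqref{tukia}, since any isomorphism of finitely generated groups is bi-Lipschitz for word metrics. That part is fine, modulo the fact that the polynomial distortion bound is the actual content and you have asserted it rather than proved it.

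The argument for \eqref{tukia2} has a genuine gap, and moreover misidentifies where hypothesis (II) is used. Your ``principal obstacle'' --- forcing the Mal'cev extension $\w\phi$ to be \emph{conformal} on the horizontal layer --- is both unobtainable and unnecessary. Unobtainable: automorphisms of a Heisenberg group restrict on the horizontal layer to essentially arbitrary invertible linear maps compatible with the bracket (e.g.\ $\mathrm{diag}(2,1/2)$ on the horizontal layer of the $3$-dimensional Heisenberg group, acting trivially on the center), so no hypothesis on base fields will make $\w\phi$ horizontally conformal. Unnecessary: because the displacement is a \emph{logarithm} of the homogeneous norm, any bi-Lipschitz (in particular any invertible linear) horizontal map already gives $2\log\|M\vv\| \asymp_\plus 2\log\|\vv\|$, which is exactly the additive asymptotic \eqref{tukia2} with $\alpha_\bp = 1$. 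Direction-dependent multiplicative distortion of the Cygan norm is harmless after taking logs.

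The real issue, which your proposal does not address, is whether $\Phi$ respects the \emph{grading}, i.e.\ whether it can move elements between the central layer (where $\dogo{\cdot}$ grows like $\log|a|$) and the horizontal layer (where it grows like $2\log\|\vv\|$). Concretely, one must show that $\w H\cap\w Z_{\w\bp}$ is commensurable with $\Phi([H,H])$; the containment $\Phi([H,H]) = [\w H,\w H]\le \w H\cap\w Z_{\w\bp}$ is automatic, but the reverse can fail --- this is precisely what happens in the paper's Example \ref{exampletukia}, where an abstractly abelian group sits partly inside the center of $\w N_{\w\bp}$ and the boundary extension fails to be quasisymmetric. This is where (I) and (II) enter: (I) makes $H$ a lattice in $N_\bp$, so $[H,H]$ is a lattice in $Z_\bp$ and has rank $\dim_\R(\F)-1$; then
\[
\dim_\R(\F)-1 = \rank([\w H,\w H]) \leq \rank(\w H\cap\w Z_{\w\bp}) \leq \dim_\R(\w\F)-1 \leq \dim_\R(\F)-1,
\]
forcing equality throughout and hence commensurability of $\w H\cap\w Z_{\w\bp}$ with $[\w H,\w H]=\Phi([H,H])$. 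Without (II) the target's center is large enough to absorb horizontal elements of $H$, and \eqref{tukia2} genuinely fails. Your Mal'cev extension step is a reasonable framework (and close in spirit to the paper's use of the Corlette--Iozzi straightening), but the conformality lemma you propose to prove should be replaced by this rank count.
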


If the assumptions (I)-(II) are omitted, it is easy to construct examples of groups $G$, $\w G$ satisfying the hypotheses of Theorem \ref{theoremtukia}(i) but for which the equivariant homeomorphism is not quasisymmetric (Example \ref{exampletukia} and Remark \ref{remarktukia}). This shows that the assumption \eqref{tukia2} cannot be omitted from part (ii) of Theorem \ref{theoremtukia}.

\begin{remark*}
As remarked earlier, for groups whose maximal parabolic subgroups are finitely generated, the assumption \eqref{tukia} can in fact be omitted from part (i) of Theorem \ref{theoremtukia} via a result of Yaman. However, the first part of Theorem \ref{theoremtukiaROSSONCT} is still significant in that it implies that Theorem \ref{theoremtukia} is sufficient to deduce the existence of equivariant boundary extensions for groups acting on finite-dimensional ROSSONCTS, and Yaman's theorem is not needed.
\end{remark*}

Using Theorems \ref{theoremtukia} and \ref{theoremtukiaROSSONCT}, we generalize a rigidity theorem of Xie \cite[Theorem 3.1]{Xie} to the setting of finite-dimensional ROSSONCTs:

\begin{theorem}
\label{theoremxie}
Let $X$, $\w X$ be finite-dimensional ROSSONCTs whose base fields $\F$ and $\w\F$ satisfy $\dim_\R(\F) \geq \dim_\R(\w\F)$, with $X\neq\H_\R^2$. Let $G\leq\Isom(X)$ be a noncompact lattice, and let $\w G\leq\Isom(\w X)$ be a geometrically finite group, both torsion-free. Let $\Phi:G\to \w G$ be a type-preserving isomorphism. Then $\HD(\Lambda(\w G)) \geq \HD(\Lambda(G)) = \HD(\del X)$, where $\HD$ denotes Hausdorff dimension. Furthermore, equality holds if and only if $\w G$ stabilizes an isometric copy of $X$ in $\w X$.
\end{theorem}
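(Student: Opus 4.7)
The plan is to combine the quasisymmetry conclusion of Theorems~\ref{theoremtukia}--\ref{theoremtukiaROSSONCT} with Pansu's theorem on the conformal dimension of ROSSONCT boundaries. First, since $G$ is a noncompact lattice in $\Isom(X)$, it is geometrically finite and $\Lambda(G) = \del X$, yielding $\HD(\Lambda(G)) = \HD(\del X)$ at once. Under assumptions (I) and (II), the hypotheses of Theorem~\ref{theoremtukiaROSSONCT} are satisfied, so both \eqref{tukia} and \eqref{tukia2} hold, and Theorem~\ref{theoremtukia}(ii) delivers an equivariant quasisymmetric homeomorphism $\phi\colon \del X \to \Lambda(\w G)$ with respect to the visual metrics.

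To prove the Hausdorff dimension inequality, I would invoke the quasisymmetric invariance of conformal dimension together with Pansu's theorem. Denoting conformal dimension by $\text{Cdim}$, one obtains
\[
\HD(\Lambda(\w G)) \;\geq\; \text{Cdim}(\Lambda(\w G)) \;=\; \text{Cdim}(\del X) \;=\; \HD(\del X),
\]
where the first inequality is the general bound of Hausdorff by conformal dimension, the middle equality is the quasisymmetric invariance of $\text{Cdim}$ applied to $\phi$, and the last equality is Pansu's theorem on the attainment of conformal dimension by the visual metric, which holds precisely when $X \neq \H_\R^2$. This establishes the first part of the theorem.

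For the equality case, I would invoke the rigidity half of Pansu--Bourdon--Xie theory: a quasisymmetric embedding of $\del X$ into $\del \w X$ whose image has Hausdorff dimension equal to $\text{Cdim}(\del X)$ must, after composing with a M\"obius self-map of $\del \w X$, coincide with the boundary extension of an isometric embedding $\iota\colon X \hookrightarrow \w X$. Equivariance of $\phi$ under $\Phi$ then forces $\w G$ to preserve the totally geodesic subspace $\iota(X)$, yielding the forward direction of the rigidity statement. The converse is immediate: if $\w G$ preserves such a copy, then via $\iota^{-1}$ it acts as an isomorphic lattice on $X$, so $\Lambda(\w G) = \iota(\del X)$ and its Hausdorff dimension equals $\HD(\del X)$ since $\iota$ restricts to an isometry on $\del X$.

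The main obstacle is the rigidity step in the equality case, namely showing that conformal-dimension-minimality of the image forces the quasisymmetric map to be the boundary of an isometric embedding. This requires Pansu-style differentiation of $\phi$ along the Carnot group structures of $\del X$ and $\del \w X$, and relies essentially on the hypothesis $\dim_\R(\F) \geq \dim_\R(\w\F)$, since otherwise $\del X$ could in principle embed quasisymmetrically into $\del \w X$ in ways not realized by isometric embeddings of the underlying spaces. The exclusion of $X = \H_\R^2$ reflects the failure of rigidity for $S^1$, which admits many Ahlfors $1$-regular quasicircle metrics attaining its conformal dimension, so that equality of Hausdorff dimensions does not pin down the conformal structure of the image.
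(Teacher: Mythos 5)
Your overall strategy coincides with the paper's: both reduce the theorem to Xie's argument by first producing the equivariant quasisymmetric boundary map $\phi:\del X\to\Lambda(\w G)$ from Theorems \ref{theoremtukia}(ii) and \ref{theoremtukiaROSSONCT}, and both obtain the inequality $\HD(\Lambda(\w G))\geq\HD(\del X)$ via Pansu's lemma (your conformal-dimension phrasing is a repackaging of the paper's citation of \cite[Lemma 2.3(a)]{Xie}, and is in fact exactly the modification the paper says must be made to Xie's original argument, which used Szpilrajn's inequality at that point). That half of the proposal is fine, modulo the slip that attainment of the conformal dimension by the visual metric also holds for $\del\H_\R^2=S^1$; the exclusion of $\H_\R^2$ is needed only for the rigidity statement, as you yourself note later.

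The equality case contains a genuine gap. The black box you invoke --- that a quasisymmetric embedding of $\del X$ into $\del\w X$ whose image has Hausdorff dimension equal to the conformal dimension of $\del X$ must be the boundary extension of an isometric embedding --- is false as stated: already for $\H_\R^3$ and $\H_\R^4$, a bi-Lipschitz but non-round embedded sphere in $S^3=\del\H_\R^4$ is a quasisymmetric image of $S^2$ of Hausdorff dimension $2$ that bounds no totally geodesic copy of $\H_\R^3$. Equivariance cannot be appended only after the rigidity conclusion, as in your sketch; it is an essential input to the rigidity argument itself (ergodicity of the $\w G$-action on $\Lambda(\w G)$ with respect to the Patterson--Sullivan measure is what upgrades Pansu's almost-everywhere differentiation to the global statement). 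Moreover, to run the Pansu/Tyson machinery one needs quantitative control of the Patterson--Sullivan measure on $\Lambda(\w G)$ --- Ahlfors regularity up to cusp corrections --- and not merely knowledge of its Hausdorff dimension; since $\w G$ is only geometrically finite, this control is exactly what the Global Measure Formula provides. The paper explicitly identifies this as the second ingredient (besides Tukia's theorem) that must be generalized to ROSSONCTs, citing Schapira \cite[Th\'eor\`eme 3.2]{Schapira}; your proposal omits it entirely, and without it the reduction to Xie's proof does not close.
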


\begin{remark}
The case of Theorem \ref{theoremxie} that occurs when $\F$ is the quaternions and $\w\F\in\{\R,\C\}$ is in fact vacuously true, as pointed out to us by Pierre Py. Indeed, in this case $G$ must have property (T) \cite[Theorems 1.5.3 and 1.7.1]{BHV} while $\w G$ must have the Haagerup property \cite[Theorem 4.2.15]{CCJJV}, and thus they cannot be isomorphic.
\end{remark}

We end this introduction by stating an open problem:

\begin{problem}
Is Theorem \ref{theoremxie} true if we drop the assumption $\dim_\R(\F)\geq \dim_\R(\w\F)$?
\end{problem}

{\bf Outline.} In Section \ref{sectionbackground}, we provide some background on ROSSONCTs, CAT(-1) spaces, and their geometry. In Section \ref{sectiontukia}, we prove Theorem \ref{theoremtukia}, and in Section \ref{sectiontukiaROSSONCT} we prove Theorems \ref{theoremtukiaROSSONCT} and \ref{theoremxie}. In the Appendix, we give the proofs of some assertions suggested by the referee, relating relative hyperbolicity with geometrical finiteness.

%%%%%%%%%%%%%%%%%%%%%%%%%%%%%%%%%%
\draftnewpage
\section{Background}
\label{sectionbackground}
%%%%%%%%%%%%%%%%%%%%%%%%%%%%%%%%%%

\subsection{ROSSONCTs}
\label{sectionROSSONCTs}
%%%%%%%%%%%%%%%%%%%%%%%%%%%%%%%%%%

A canonical class of negatively curved manifolds is the rank one symmetric spaces of noncompact type (ROSSONCTs), which come in four flavors, corresponding to the classical division algebras $\R$, $\C$, $\Q$ (quaternions), and $\mathbb O$ (octonions).\Footnote{We denote the quaternions by $\mathbb Q$ in order to avoid confusion with the ROSSONCT\ itself, which we will denote by $\mathbb H$. $\mathbb Q$ should not be confused with the set of rational numbers.} The first three division algebras have corresponding ROSSONCTs\ of arbitrary dimension, but there is only one ROSSONCT\ corresponding to the octonions; it occurs in dimension two (which corresponds to real dimension 16). 
%Consequently, the octonion ROSSONCT\ (known as the \emph{Cayley hyperbolic plane}\Footnote{Not to be confused with the \emph{Cayley plane}, a different mathematical object.}) does not have an infinite-dimensional analogue, while the other three classes do admit infinite-dimensional analogues. 
The ROSSONCTs\ corresponding to $\R$ have constant negative curvature, however those corresponding to the other division algebras have variable negative curvature \cite[Lemmas 2.3, 2.7, 2.11]{Quint} (see also \cite[Corollary of Proposition 4]{Heintze}). Some references for the theory of ROSSONCTs are \cite{BridsonHaefliger,CFKP,MackayTyson}, and we use the same notation as \cite[\62]{DSU}.

\begin{remark}
\label{remarkH2O}
In the remainder of the text we use the term ``ROSSONCT'' to refer to all ROSSONCTs \emph{except} the the octonion ROSSONCT\ $\H_{\mathbb O}^2$, known as the \emph{Cayley hyperbolic plane},\Footnote{Not to be confused with the \emph{Cayley plane}, a different mathematical object.} in order to avoid dealing with the complicated algebra of the octonion ROSSONCT.\Footnote{The complications come from the fact that the octonions are not associative, thus making it somewhat unclear what it means to say that $\mathbb O^3$ is a vector space ``over'' the octonions, since in general $(\xx a) b \neq \xx (a b)$.} 
However, it may be of interest to investigate whether our results generalize to include the Cayley hyperbolic plane (possibly after modifying the statements slightly). We leave this task to an algebraist. 
%For the reader interested in learning more about the Cayley hyperbolic plane, see \cite[pp.136-139]{Mostow}, \cite{SpringerVeldkamp}, or \cite{Allcock}; see also \cite{Baez} for an excellent introduction to octonions in general.
\end{remark}

Fix $\F\in\{\R,\C,\Q\}$ and $d\in\N$. Let us construct a ROSSONCT\ of type $\F$ in dimension $d$. In what follows we think of $\F^{d + 1}$ as a right $\F$-module (i.e. scalars always act on the right).\Footnote{The advantage of this convention is that it allows matrices to act on the left.} Consider the skew-symmetric sesquilinear form $B_\QQ:\F^{d + 1}\times\F^{d + 1}\to\F$ defined by
\[
B_\QQ(\xx,\yy) := -\wbar x_0 y_0 + \sum_{i=1}^{d} \wbar x_i y_i
\]
and its associated quadratic form
\begin{equation}
\label{Qdef}
\QQ(\xx) := B_\QQ(\xx,\xx) = -|x_0|^2 + \sum_{i=1}^{d} |x_i|^2.
\end{equation}
Let $\proj(\F^{d + 1})$ denote the \emph{projectivization} of $\F^{d + 1}$, i.e. the quotient of $\F^{d + 1}\butnot\{\0\}$ under the equivalence relation $\xx\sim \xx a$ ($\xx\in\F^{d + 1}\butnot\{\0\}$, $a\in \F\butnot\{0\}$). Let
\[
\H = \H_\F^d := \{ [\xx]\in\proj(\F^{d + 1}) : \QQ(\xx) < 0 \},
\]
and consider the map $\dist_\H: \H\times\H\to \Rplus$ defined by the equation
\begin{equation}
\label{distanceinL}
\cosh\dist_\H([\xx],[\yy]) = \frac{|B_\QQ(\xx,\yy)|}{\sqrt{|\QQ(\xx)|\cdot|\QQ(\yy)}|}, \;\;\; [\xx],[\yy]\in \H.
\end{equation}
The map $\dist_\H$ defines a metric on $\H$ that is compatible with the natural topology (as a subspace of the quotient space $\proj(\F^{d + 1})$). Moreover, for any two distinct points $[\xx],[\yy]\in\HH$ there exists a unique isometric embedding $\gamma:\R\to \H$ such that $\gamma(0) = [\xx]$ and $\gamma\circ\dist_\H([\xx],[\yy]) = [\yy]$.

\begin{definition}
\label{definitionROSSONCT}
A \emph{rank one symmetric space of noncompact type (ROSSONCT)} of dimension $d$ is a pair $(\H_\F^d,\dist_\H)$, where $\F\in\{\R,\C,\Q\}$ and $d\in\N$.
\end{definition}

This definition can be extended to infinite dimensions; cf. \cite[Definition 2.2.6]{DSU}.

\begin{remark}
\label{remarkROSSONCTterminology}
Technically, the above definition should really be a theorem (modulo the Cayley hyperbolic plane, cf. Remark \ref{remarkH2O}), since symmetric spaces are a certain type of Riemannian manifolds, with rank and type being properties of those manifolds; the classification of rank one symmetric spaces of noncompact type then follows from the classification of symmetric spaces generally (e.g. \cite[p.518]{Helgason}).\Footnote{In the notation of \cite{Helgason}, the spaces $\mathbb H_{\mathbb R}^p$, $\mathbb H_{\mathbb C}^p$, $\mathbb H_{\mathbb Q}^p$, and $\mathbb H_{\mathbb O}^2$ are written as $\SO(p,1)/\SO(p)$, $\SU(p,1)/\SU(p)$, $\Sp(p,1)/\Sp(p)$, and $(\mathfrak f_{4(-20)},\so(9))$, respectively.} However, we prefer to define ROSSONCTs without dealing with the algebra behind symmetric spaces in general.
\end{remark}

\subsection{Negatively curved metric spaces} \label{sectiongeometry1}
%%%%%%%%%%%%%%%%%%%%%%%%%%%%%%%%

A good reference for the theory of ``negative curvature'' in general metric spaces is \cite{BridsonHaefliger}. 
We assume that that the reader is aware of the definition of a CAT(-1) spaces, viz. these are geodesic metric spaces whose triangles are ``thinner'' than the corresponding triangles in two-dimensional real hyperbolic space $\H^2$, see \cite[p.158]{BridsonHaefliger} for details. It follows from their definition that CAT(-1) spaces are uniquely geodesic; we denote the unique geodesic segment connecting two points $x,y$ by $\geo xy$. Any Riemannian manifold with sectional curvature bounded above by $-1$ is a CAT(-1) space. Since ROSSONCTs are Riemannian manifolds with sectional curvature bounded between $-4$ and $-1$, every ROSSONCT\ is a CAT(-1) space.

The next level of generality considers Gromov hyperbolic metric spaces. These are spaces that are ``approximately $\R$-trees''. A good reference for the basics of the theory is \cite{Vaisala}. 

\begin{definition}
\label{definitiongromovhyperbolic}
A metric space $X$ is called \emph{hyperbolic} (or \emph{Gromov hyperbolic}) if for every four points $x,y,z,w\in X$ we have 
\begin{equation}
\label{gromov}
\lb x|z\rb_w \gtrsim_\plus \min(\lb x|y\rb_w,\lb y|z\rb_w),
\end{equation}
where the expression 
\begin{equation}
\label{gromovproduct}
\lb b|c\rb_a := \frac12[\dist(a,b) + \dist(a,c) - \dist(b,c)]
\end{equation}
is called the \emph{Gromov product} of $b$ and $c$ with respect to $a$. We refer to \eqref{gromov} as \emph{Gromov's inequality}.
\end{definition}

The \emph{Gromov boundary} of $X$, denoted $\del X$, is the set of Gromov sequences modulo equivalence, see \cite[Definition 3.4.1]{DSU} for details. The \emph{Gromov closure} or \emph{bordification} of $X$ is the disjoint union $\bord X := X\cup\del X$. The Gromov product can be extended in a near-continuous way to $\bord X$, see \cite[Definition 3.4.9, Lemma 3.4.22]{DSU}. For each $z\in \bord X$, let $\busemann_z$ denote the \emph{Busemann function}
\begin{equation}
\label{busemanndef}
\busemann_z(x,y) := \lb y|z \rb_x - \lb x|z \rb_y~.
\end{equation}
Note that for $z \in X$, this formula reduces to $\busemann_z(x,y) = \dist(z,x) - \dist(z,y)$.

If $X$ is a CAT(-1) space, then unique geodesicity extends to the bordification in the sense that for all $x,y\in\bord X$ such that $x\neq y$, there exists a unique geodesic segment connecting $x$ with $y$ \cite[Proposition 4.4.4]{DSU}. Again, we denote this geodesic segment by $\geo xy$.

If $X$ is a geodesic metric space, then the condition of hyperbolicity can be reformulated in several different ways, including the \emph{thin triangles condition}.
\begin{proposition}[{\cite[\6III.H.1]{BridsonHaefliger}}]
\label{propositionrips}
Assume that $X$ is a geodesic hyperbolic metric space. Let $\geo xy$ denote the geodesic segment connecting two points $x,y\in X$.
\begin{itemize}
\item[(i)] For all $x,y,z\in X$,
\[
\dist(z,\geo xy) \asymp_\plus \lb x|y\rb_z.
\]
\item[(ii)] (Rips' thin triangles condition) For all $x,y_1,y_2\in X$ and $z\in\geo{y_1}{y_2}$, we have
\[
\min_{i = 1}^2 \dist(z,\geo x{y_i}) \asymp_\plus 0.
\]
\end{itemize}
\end{proposition}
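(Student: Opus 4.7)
The plan is to derive (i) from Gromov's inequality \eqref{gromov} by a judicious choice of auxiliary point, and then to deduce (ii) as an immediate corollary. The only structural input needed beyond Gromov hyperbolicity itself is that any $w$ on a geodesic segment $\geo xy$ satisfies $\dist(x,w)+\dist(w,y)=\dist(x,y)$. For the easy direction of (i), namely $\lb x|y\rb_z\leq\dist(z,\geo xy)$ (with implied constant $0$), I would take any $w\in\geo xy$, add the two triangle inequalities $\dist(z,x)\leq\dist(z,w)+\dist(w,x)$ and $\dist(z,y)\leq\dist(z,w)+\dist(w,y)$, and use the geodesic identity together with the definition of the Gromov product to obtain $\lb x|y\rb_z\leq\dist(z,w)$; infimizing over $w$ concludes.

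For the reverse inequality in (i), I would pick the specific $w\in\geo xy$ characterized by $\dist(x,w)=\lb y|z\rb_x$, which is a legitimate choice because the triangle inequality forces $0\leq\lb y|z\rb_x\leq\dist(x,y)$. A short direct computation using the geodesic identity and the definition of the Gromov product then yields the symmetric formula
\[
\lb x|w\rb_z=\lb y|w\rb_z=\tfrac12\bigl(\lb x|y\rb_z+\dist(z,w)\bigr).
\]
Applying Gromov's inequality to $x,w,y,z$ gives $\lb x|y\rb_z\gtrsim_\plus\min(\lb x|w\rb_z,\lb y|w\rb_z)=\tfrac12(\lb x|y\rb_z+\dist(z,w))$, and rearranging yields $\dist(z,w)\lesssim_\plus\lb x|y\rb_z$. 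Since $\dist(z,\geo xy)\leq\dist(z,w)$ this completes (i). Morally, this choice of $w$ is the branch point of the tripod in the tree approximation of the triangle with vertices $x,y,z$.

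Part (ii) then follows in a few lines. Since $z\in\geo{y_1}{y_2}$ the geodesic identity forces $\lb y_1|y_2\rb_z=0$, and Gromov's inequality applied with the auxiliary point $x$ yields $\min(\lb y_1|x\rb_z,\lb y_2|x\rb_z)\lesssim_\plus0$; feeding each of the two terms back into (i) converts this into $\min_i\dist(z,\geo x{y_i})\lesssim_\plus0$. The only real obstacle in the whole argument is verifying the symmetric identity pinning down $\lb x|w\rb_z$ and $\lb y|w\rb_z$ in terms of $\dist(z,w)$ and $\lb x|y\rb_z$; once that identity is in hand, everything else is one line of rearrangement plus a single invocation of Gromov's inequality.
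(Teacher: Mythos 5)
Your proof is correct: the computation pinning down $\lb x|w\rb_z=\lb y|w\rb_z=\tfrac12(\lb x|y\rb_z+\dist(z,w))$ for the "branch point" $w$ with $\dist(x,w)=\lb y|z\rb_x$ checks out, and both directions of (i) as well as the deduction of (ii) follow as you describe. The paper gives no proof of its own (it cites \cite[\S III.H.1]{BridsonHaefliger}), and your argument is essentially the standard one found there, so there is nothing further to compare.
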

\noindent In fact, the thin triangles condition is equivalent to hyperbolicity; see e.g. \cite[Proposition III.H.1.22]{BridsonHaefliger}.

\begin{convention}
In the remainder of this text, $X$ denotes a CAT(-1) space, and $\zero\in X$ denotes a distinguished point.
\end{convention}

In particular, $X$ is a geodesic Gromov hyperbolic metric space. However, $X$ is not necessarily proper. We keep in mind the special case where $X$ is a ROSSONCT (finite- or infinite-dimensional).

We now recall various definitions and theorems from \cite{DSU}.

%%%%%%%%%%%%%%%%%%%%%%%%%%%%%%%%
\subsection{Classification of isometries}
\label{sectionclassification}
%%%%%%%%%%%%%%%%%%%%%%%%%%%%%%%%

For $g\in\Isom(X)$, let $\Fix(g) := \{x\in\bord X:g(x) = x\}$. Given $\xi\in\Fix(g)\cap\del X$, $\xi$ is a \emph{neutral} or \emph{indifferent} fixed point if $g'(\xi) = 1$,
an \emph{attracting} fixed point if $g'(\xi) < 1$, and a \emph{repelling} fixed point if $g'(\xi) > 1$. Here, $g'(\xi)$ denotes the metric derivative of $g$ at $\xi$ (see \cite[\64.2.2]{DSU}).

\begin{definition}
\label{definitionclassification1}
An isometry $g\in\Isom(X)$ is called
\begin{itemize}
\item \emph{elliptic} if the orbit $\{g^n(\zero):n\in\N\}$ is bounded,
\item \emph{parabolic} if it is not elliptic and has a unique fixed point, which is neutral, and
\item \emph{loxodromic} if it has exactly two fixed points, one of which is attracting and the other of which is repelling.
\end{itemize}
\end{definition}

The categories of elliptic, parabolic, and loxodromic are clearly mutually exclusive. Conversely, any isometry is either elliptic, parabolic, or loxodromic (e.g. \cite[Theorem 6.1.4]{DSU}).

%%%%%%%%%%%%%%%%%%%%%%%%%%%%%%%%%%
\bigskip
\subsection{The limit set}
%%%%%%%%%%%%%%%%%%%%%%%%%%%%%%%%%%

An important invariant of a group $G\leq \Isom(X)$ is its \emph{limit set} $\Lambda \subset \del X$, defined as the intersection of the closure of $G(\zero)$ with $\del X$. The limit set $\Lambda$ is both closed and $G$-invariant, and conversely
\begin{proposition}[{\cite[Th\'eor\`eme 5.1]{Coornaert}} or {\cite[Theorem 7.4.1]{DSU}}]
\label{propositionminimal}
Fix $G\leq\Isom(X)$. Then any closed $G$-invariant subset of $\del X$ containing at least two points contains $\Lambda$.
\end{proposition}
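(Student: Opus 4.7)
The plan is to use the standard minimality argument for limit sets, adapted to the (possibly non-proper) CAT($-1$) setting via Gromov product estimates. Let $F\subset\del X$ be closed, $G$-invariant, with $|F|\geq 2$, and let $\xi\in\Lambda$. I want to produce a sequence in $F$ converging to $\xi$; then closedness of $F$ forces $\xi\in F$.

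By definition of $\Lambda$, there exists a sequence $g_n\in G$ with $g_n(\zero)\to\xi$. The key lemma I would prove is a ``dynamical'' one: after passing to a subsequence, there exists $\xi^*\in\del X$ (a subsequential Gromov limit of $g_n^{-1}(\zero)$) such that for every $\eta\in\bord X\setminus\{\xi^*\}$, the sequence $g_n(\eta)$ converges to $\xi$. The mechanism is the identity
\[
\lb g_n(\eta)\mid g_n(\zero)\rb_\zero \;=\; \lb \eta\mid \zero\rb_{g_n^{-1}(\zero)},
\]
obtained from isometry invariance of Gromov products. Since $\|g_n\|\to\infty$, the right-hand side can be analyzed via Gromov's inequality applied to the quadruple $\zero, \eta, g_n^{-1}(\zero), \xi^*$: whenever $\eta\neq\xi^*$, the Gromov product $\lb \eta\mid \xi^*\rb_\zero$ is finite and bounded, which forces $\lb \eta\mid g_n^{-1}(\zero)\rb_\zero$ to stay bounded, and together with $\|g_n^{-1}(\zero)\|\to\infty$ this makes the right-hand side tend to $\infty$. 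Combined with $g_n(\zero)\to\xi$, another application of Gromov's inequality (to $\zero, g_n(\eta), g_n(\zero), \xi$) gives $\lb g_n(\eta)\mid\xi\rb_\zero\to\infty$, i.e.\ $g_n(\eta)\to\xi$ in $\bord X$.

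With the lemma in hand, finishing is easy: pick any two distinct points $\eta_1,\eta_2\in F$. At most one of them coincides with $\xi^*$, so for some $i\in\{1,2\}$ the sequence $g_n(\eta_i)$ converges to $\xi$. By $G$-invariance of $F$ we have $g_n(\eta_i)\in F$ for all $n$, and by closedness $\xi\in F$.

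The main obstacle is exactly the non-properness of $X$: we cannot extract convergent subsequences of $g_n^{-1}(\zero)$ in $\bord X$ using compactness. Instead I would rely on the sequential construction of $\del X$ from Gromov sequences to extract, by a diagonal argument, a subsequence along which $g_n^{-1}(\zero)$ is either a Gromov sequence (defining the desired $\xi^*\in\del X$) or stays bounded in $X$. The bounded case is easier: then $g_n$ lies in a bounded set of $G$, and by strong discreteness arguments one can check it contradicts $g_n(\zero)\to\xi\in\del X$, so this case does not arise. Everything else is a clean Gromov-product chase using only Definition \ref{definitiongromovhyperbolic} and the extended Gromov product on $\bord X$ mentioned after Definition \ref{definitiongromovhyperbolic}.
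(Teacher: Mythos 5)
The paper itself gives no proof of this proposition (it cites Coornaert and \cite[Theorem 7.4.1]{DSU}), so I can only judge your argument on its own terms. Your overall mechanism --- the identity $\lb g_n(\eta)\mid g_n(\zero)\rb_\zero=\lb\eta\mid\zero\rb_{g_n^{-1}(\zero)}=\dogo{g_n}-\lb\eta\mid g_n^{-1}(\zero)\rb_\zero+O(1)$, followed by a Gromov-inequality chase --- is exactly the right one, and the endgame (two points of $F$, at least one of which moves to $\xi$) is correct. But the step where you manufacture $\xi^*$ has a genuine gap: in a non-proper CAT($-1$) space a sequence $g_n^{-1}(\zero)$ with $\dogo{g_n}\to\infty$ need \emph{not} admit any subsequence that is a Gromov sequence. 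The trichotomy is not ``bounded or subconverges to a boundary point'': consider an $\R$-tree consisting of infinitely many rays glued at $\zero$ and points $x_n$ at distance $n$ along the $n$-th ray; then $\lb x_n\mid x_m\rb_\zero=0$ for $n\neq m$, so no subsequence is Gromov, yet $\dox{x_n}\to\infty$. No diagonal argument over Gromov sequences can rescue this, so the lemma producing $\xi^*$ cannot be established by the route you describe.

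The repair is cheap and bypasses $\xi^*$ entirely: take the two given distinct points $\eta_1,\eta_2\in F$, note $\lb\eta_1\mid\eta_2\rb_\zero<\infty$, and apply Gromov's inequality to get $\min\bigl(\lb\eta_1\mid g_n^{-1}(\zero)\rb_\zero,\lb\eta_2\mid g_n^{-1}(\zero)\rb_\zero\bigr)\lesssim_\plus\lb\eta_1\mid\eta_2\rb_\zero$ for every $n$. By pigeonhole, after passing to a subsequence there is a fixed $i$ with $\lb\eta_i\mid g_n^{-1}(\zero)\rb_\zero$ bounded, whence $\lb g_n(\eta_i)\mid g_n(\zero)\rb_\zero\gtrsim_\plus\dogo{g_n}\to\infty$; combined with $g_n(\zero)\to\xi$ and one more application of Gromov's inequality this gives $g_n(\eta_i)\to\xi$, and closedness plus $G$-invariance of $F$ finish the proof. (Your ``bounded case'' is also more immediate than you make it: $\dist(\zero,g_n^{-1}(\zero))=\dogo{g_n}$, so boundedness directly contradicts $g_n(\zero)\to\xi\in\del X$ with no appeal to strong discreteness --- which the proposition does not assume anyway.) With this substitution your write-up becomes the standard proof found in the cited references.
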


%\begin{proposition}[Cardinality of the limit set by classification]
%\label{propositioncardinalitylimitset}
%Fix $G\leq\Isom(X)$.
%\begin{itemize}
%\item[(i)] If $G$ is elliptic, then $\LambdaG = \emptyset$.
%\item[(ii)] If $G$ is parabolic or inward focal with global fixed point $\xi$, then $\LambdaG = \{\xi\}$.
%\item[(iii)] If $G$ is lineal with fixed pair $\{\xi_1,\xi_2\}$, then $\LambdaG \subset \{\xi_1,\xi_2\}$, with equality if $G$ is a group.
%\item[(iv)] If $G$ is outward focal or of general type, then $\#(\LambdaG) \geq \#(\R)$. Equality holds if $X$ is separable.
%\end{itemize}
%\end{proposition}
%
%\begin{definition}
%\label{definitionelementary}
%Fix $G\leq\Isom(X)$. $G$ is called \emph{elementary} if $\#(\Lambda) < \infty$ and \emph{nonelementary} if $\#(\Lambda) = \infty$.
%\end{definition}

%%%%%%%%%%%%%%%%%%%%%%%%%%%%%%%%%%
\bigskip
\subsection{The quasiconvex core}
\label{subsectionconvexhulls}
%%%%%%%%%%%%%%%%%%%%%%%%%%%%%%%%%%

\begin{definition}
\label{definitionconvexhull}
The \emph{quasiconvex hull} of a set $S\subset\bord X$ is the set
Given $S\subset\bord X$, let
\[
\Hull_1(S) := \bigcup_{x,y\in S} \geo xy.
\]
The \emph{quasiconvex core} of a group $G\leq\Isom(X)$ is the set
\[
\CC_\zero := X\cap \cl{\Hull_1(G(\zero))}.
\]
\end{definition}

Although the quasiconvex core depends on the distinguished point $\zero$, for any $x,y\in X$ the Hausdorff distance between the sets $\CC_x$ and $\CC_y$ is finite \cite[Proposition 7.5.9]{DSU}.

\begin{remark}
The quasiconvex hull of a set $S\subset\bord X$ is in general smaller than the \emph{convex hull}, which is by definition the smallest convex subset of $\bord X$ that contains $S$. However, for a general CAT(-1) space the operation of taking the convex hull may be quite badly behaved; cf. \cite[Remark 7.5.6]{DSU} for a more detailed discussion with references.% \cite[Corollary C]{Ancona} and \cite[Theorem 1]{Borbely}
\end{remark}

\subsection{Horoballs}
\label{subsectionhoroballs}

\begin{definition}
\label{definitionhoroball}
A \emph{horoball} is a set of the form
\[
H_{\xi,t} = \{x\in X : \busemann_\xi (\zero, x) > t\},
\]
where $\xi\in\del X$ and $t\in\R$. The point $\xi$ is called the \emph{center} of a horoball $H_{\xi,t}$, and will be denoted $\Center(H_{\xi,t})$. Note that for any horoball $H$, we have
\[
\cl H \cap \del X = \{\Center(H)\}.
\]
\end{definition}

%%%%%%%%%%%%%%%%%%%%%%%%%%%%%%%%
\subsection{Parabolic fixed points}
\label{sectionparabolic}
%%%%%%%%%%%%%%%%%%%%%%%%%%%%%%%%

\begin{definition}
\label{definitionparabolic}
A group $G\leq\Isom(X)$ is \emph{parabolic} if $G(\zero)$ is an unbounded set and $G$ has a global fixed point $\xi\in \Fix(G)$ such that 
\[
g'(\xi) = 1 \all g\in G,
\]
i.e. $\xi$ is neutral with respect to every element of $G$.
\end{definition}

\begin{definition}
\label{definitionLbp}
Let $G\leq\Isom(X)$. A point $\xi\in\del X$ is a \emph{parabolic fixed point} of $G$ if the semigroup
\[
G_\xi := \Stab(G;\xi) = \{g\in G:g(\xi) = \xi\}
\]
is a parabolic group. If in addition there exists a set $S\subset X$ whose closure does not contain $\xi$ such that
\[
G(\zero) \subset G_\xi(S),
\]
then $\xi$ is called a \emph{bounded parabolic point}. We denote the set of bounded parabolic poitns by $\Lbp(G)$.
\end{definition}

\begin{lemma}[{\cite[Lemma 12.3.6]{DSU}}]
\label{lemmaboundedparabolic}
Let $\xi$ be a bounded parabolic limit point of $G$, and let $H$ be a horoball centered at $\xi$ satisfying $G(\zero)\cap H = \emptyset$. Then there exists $\rho > 0$ such that
\begin{equation}
\label{LbpD}
\CC_\zero\cap\del H \subset G_\xi(B(\zero,\rho)).
\end{equation}
\end{lemma}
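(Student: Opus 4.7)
The plan is to fix $x\in\CC_\zero\cap\del H$ and exhibit some $h_1\in G_\xi$ with $h_1^{-1}(x)\in B(\zero,\rho)$, where $\rho$ depends only on $G$, $\xi$, and $H$. The argument has three ingredients: the definition of $\CC_\zero$ to locate a geodesic near $x$, Rips' thin-triangle condition applied to the ideal triangle with one vertex at $\xi$, and the bounded-parabolic hypothesis, which is used both to pick $h_1$ and, via $\xi\notin\cl S$, to control the resulting geometry on $\del H$.

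First I would extract $p,q\in G(\zero)$ and $y\in\geo pq$ with $\dist(x,y)\le 1$; the hypothesis $G(\zero)\cap H=\emptyset$ forces $p,q\notin H$. Next I apply Rips' thin-triangle condition (Proposition~\ref{propositionrips}(ii)) to the ideal triangle $p,q,\xi$---valid in CAT($-1$) with a universal constant $\delta$ via a limiting argument on finite approximating triangles---to find $z\in\geo p\xi\cup\geo q\xi$ with $\dist(y,z)\le\delta$; without loss of generality $z\in\geo p\xi$. Since $\busemann_\xi$ is 1-Lipschitz and increases at unit speed along $\geo p\xi$, $z$ lies within $1+\delta$ of the unique intersection $z_p:=\geo p\xi\cap\del H$, so $\dist(x,z_p)\le C_1:=2(1+\delta)$. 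Writing $p=h_1(s_1)$ with $h_1\in G_\xi$, $s_1\in S$, and using that $h_1$ fixes $\xi$ and preserves $H$, one has $h_1^{-1}(z_p)=\geo{s_1}\xi\cap\del H=:z_{s_1}$ and $\dist(h_1^{-1}(x),z_{s_1})\le C_1$.

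The crux is then to bound $\dist(\zero,z_{s_1})$ uniformly in $s_1\in S$. The hypothesis $\xi\notin\cl S$ is equivalent, via the near-continuity of the extended Gromov product, to $C:=\sup_{s\in S}\lb s|\xi\rb_\zero<\infty$, and (an ideal version of) Proposition~\ref{propositionrips}(i) gives $\dist(\zero,\geo{s_1}\xi)\lesssim_\plus C$. Letting $p^*$ be the closest point to $\zero$ on this ray, $|\busemann_\xi(\zero,p^*)|\le\dist(\zero,p^*)\lesssim_\plus C$; since $z_{s_1}$ lies on the same ray at Busemann level $t$, $\dist(p^*,z_{s_1})=|t-\busemann_\xi(\zero,p^*)|\le t+C+O(1)$. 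Combining, $\dist(\zero,h_1^{-1}(x))\le t+2C+C_1+O(1)$, and we take this as $\rho$. The main obstacle I expect is precisely this final uniform estimate: since $S$ is not assumed bounded in $X$ one cannot bound $\dist(\zero,s_1)$ directly, and must instead route the Gromov-product control $\lb s_1|\xi\rb_\zero\le C$ through the ray $\geo{s_1}\xi$ and onto the horoball boundary $\del H$; the thin-triangle and Busemann identifications above are what make this possible.
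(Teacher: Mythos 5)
Your proof is correct: the paper does not prove this lemma itself but cites it from \cite{DSU}, and your argument --- tracking $x\in\CC_\zero\cap\del H$ to a geodesic $\geo pq$ between orbit points, projecting via the thin ideal triangle $p,q,\xi$ onto $\geo p\xi\cap\del H$, and using $p\in G_\xi(S)$ together with $\xi\notin\cl S$ (equivalently $\sup_{s\in S}\lb s|\xi\rb_\zero<\infty$) to translate everything into a uniform ball around $\zero$ --- is essentially the standard proof given there. The only points requiring care (thinness of triangles with an ideal vertex, the Gromov-product reformulation of $\xi\notin\cl S$, and the fact that $s_1=h_1^{-1}(p)\notin H$ so that $\geo{s_1}\xi\cap\del H$ is a well-defined single point) are all ones you address correctly.
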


\subsection{Quasi-isometric embeddings of $G$ into $X$}

\begin{definition}
Let $X,Y$ be metric spaces. A map $f:X\to Y$ is called a \emph{quasi-isometric embedding} if for all $x_1,x_2\in X$,
\[
\dist_Y(f(x_1),f(x_2)) \asymp_{\plus,\times} \dist_X(x_1,x_2).
\]
A set $S\subset Y$ is \emph{cobounded} if there exists $\rho > 0$ such that for all $y_1\in Y$, there exists $y_2\in S$ such that $\dist_Y(y_1,y_2) \leq \rho$. A quasi-isometric embedding whose image is cobounded is called a \emph{quasi-isometry}.
\end{definition}

If $G\leq\Isom(X)$ is a geometrically finite group without parabolic points, then by the Milnor--Schwarz lemma \cite[Proposition I.8.19]{BridsonHaefliger}, $G$ is finitely generated, and for any Cayley graph of $G$, the orbit map $g\mapsto g(\zero)$ is a quasi-isometric embedding. If $G$ is geometrically finite with parabolic points, then in general neither of these things is true.\Footnote{For examples of infinitely generated strongly discrete parabolic groups, see \cite[Examples 11.2.18 and 11.2.20]{DSU}; these examples can be extended to nonelementary examples by taking a Schottky product with a lineal group. 
\cite[Theorem 11.2.6]{DSU} guarantees that the orbit map of a parabolic group is never a quasi-isometric embedding.} Nevertheless, by considering a certain \emph{weighted} Cayley metric with infinitely many generators, we can recover the rough metric structure of the orbit $G(\zero)$.

%\begin{definition}
%\label{definitiongraphmetrization}
%A \emph{weighted undirected graph} is a triple $(V,E,\ell)$, where $V$ is a nonempty set, $E\subset V\times V\butnot\{(x,x) : x\in V\}$ is invariant under the map $(x,y)\mapsto (y,x)$, and $\ell:E\to(0,\infty)$ is also invariant under $(x,y)\to (y,x)$. (If $\ell \equiv 1$, the graph is called \emph{unweighted}, and can be denoted simply $(V,E)$.) The \emph{path metric} on $V$ is the metric
%\begin{equation}
%\label{pathmetric}
%\dist_{E,\ell}(x,y) := \inf\left\{ \sum_{i = 0}^{n - 1} \ell(z_i,z_{i + 1}) : z_0 = x,\; z_n = y,\; (z_i,z_{i + 1})\in E \all i = 0,\ldots,n - 1\right\}.
%\end{equation}
%The \emph{geometric realization} of the graph $(V,E,\ell)$ is the metric space
%\[
%X = X(V,E,\ell) = \left(V\cup\bigcup_{(v,w)\in E} [0,\ell(v,w)]\right)/\sim,
%\]
%where $\sim$ represents the following identifications:
%\begin{align*}
%v &\sim ((v,w),0) \all (v,w)\in E\\
%((v,w),t) &\sim ((w,v),\ell(v,w) - t) \all (v,w)\in E \all t\in[0,\ell(v,w)]
%\end{align*}
%and the metric $\dist$ on $X$ is given by
%\[
%\dist\big(((v_0,v_1),t),((w_0,w_1),s)\big) = \min\{ |t - i\ell(v_0,v_1)| + \dist(v_i,w_j) + |s - j\ell(w_0,w_1)| : i = 0,1, j = 0,1\}.
%\]
%(The geometric realization of a graph is sometimes also called a graph. In the sequel, we shall call it a \emph{geometric graph}.)
%\end{definition}

\begin{definition}
Let $\Gamma$ be a group, let $E_0\subset \Gamma$ be a generating set, and let $\ell_0:E_0\to (0,\infty)$. Assume that for all $g\in E_0$, we have $g^{-1}\in E_0$ and $\ell_0(g^{-1}) = \ell_0(g)$. Then the \emph{weighted Cayley metric} on $\Gamma$ corresponding to the generating set $E_0$ and the weight function $\ell_0$ is the metric $\dist_\Gamma$ given by the formula
\[
\dist_\Gamma(g_1,g_2) := \inf_{\substack{(h_i)_1^n\in E_0^n \\ g_1 = g_2 h_1\cdots h_n}} \sum_{i = 1}^n \ell_0(h_i).
\]
\end{definition}

Let $G\leq\Isom(X)$ be a geometrically finite group. In what follows, we describe a generating set and a weight function whose weighted Cayley metric recovers the rough metric structure of $G(\zero)$. Let $P$ be a complete set of inequivalent parabolic points of $G$, and consider the set
\[
E := \bigcup_{\bp\in P} G_\bp,
\]
and for each $h\in G$ let
\begin{equation}
\label{l0def}
\ell_0(h) := 1\vee\dogo h.
\end{equation}
When $G$ is endowed with its weighted Cayley metric corresponding to the generating set $E\cup F$ (where $F$ is a sufficiently large finite set) and the weight function $\ell_0$, then the orbit map will be a quasi-isometric embedding:

\begin{theorem}[{\cite[Theorem 12.4.14]{DSU}}]
\label{theoremGF}
If $G\leq\Isom(X)$ is geometrically finite (cf. Definition \ref{definitionGF}), then
\begin{itemize}
\item[(i)] There exists a finite set $F$ such that $G$ is generated by $E\cup F$.
\item[(ii)] The orbit map $g\mapsto g(\zero)$ is a quasi-isometric embedding with respect to the weighted Cayley metric corresponding to the generating set $E\cup F$ and the weight function \eqref{l0def}.
\end{itemize}
\end{theorem}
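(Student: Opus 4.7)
The plan is to prove (i) and (ii) simultaneously by constructing, for each $g\in G$, an explicit factorization $g = h_1 h_2 \cdots h_n$ with every $h_i\in E\cup F$ and with $\sum_i \ell_0(h_i) \lesssim_{\plus,\times} \dogo{g}$. This construction immediately yields both finite generation (part (i)) and the nontrivial direction of the quasi-isometric embedding in (ii); the easy direction $\dogo{g}\leq \dist_\Gamma(e,g)$ follows from the triangle inequality and isometry invariance, since
\[
\dogo{h_1\cdots h_n} \leq \sum_i\dogo{h_i} \leq \sum_i \ell_0(h_i)
\]
for any factorization.

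The factorization is read off the geodesic $\gamma = \geo{\zero}{g(\zero)}$. By Proposition \ref{propositionrips}(i), $\gamma$ lies in a bounded neighborhood of the quasiconvex core $\CC_\zero$, and by Definition \ref{definitionGF}(II) we have $\CC_\zero \subset G(B(\zero,\sigma)) \cup \bigcup\scrH$. I subdivide $\gamma$ alternately into \emph{thick segments} contained in $G(B(\zero,\sigma))$ and \emph{horoball chords} $\gamma\cap H$ with $H\in\scrH$. Along each thick segment I pick a sequence of orbit points $g_{i-1}(\zero), g_i(\zero)$ at uniformly bounded pairwise distance, so that $h_i := g_{i-1}^{-1}g_i$ satisfies $\dogo{h_i}\leq C$ for a fixed constant $C$ depending on $\sigma$ and the hyperbolicity constants. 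Setting $F := \{f\in G:\dogo{f}\leq C\}$, which is finite by strong discreteness, these $h_i$ lie in $F$.

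For each horoball crossing a parabolic generator does the work. Since $\scrH$ is $G$-invariant and, by geometric finiteness, has only finitely many $G$-orbits, a crossed horoball can be written $H = g_{i-1}(H_\bp)$ with $\bp\in P$ and $H_\bp\in\scrH$ centered at $\bp$, after possibly absorbing a bounded transition into $F$. Lemma \ref{lemmaboundedparabolic} guarantees that $G_\bp$ acts coboundedly on $\CC_\zero\cap\del H_\bp$, so I can select $h_i\in G_\bp$ with $h_i(\zero)$ within uniform distance of $g_{i-1}^{-1}(y_i)$, where $y_i$ is the point at which $\gamma$ exits $H$. Standard hyperbolic horoball geometry then yields
\[
\dogo{h_i} \asymp_\plus 2\dist(\zero, H_\bp) + \mathrm{length}(\gamma\cap H),
\]
and since $P$ is finite the first summand is uniformly bounded; consequently $\ell_0(h_i) = 1 \vee \dogo{h_i} \lesssim_\plus \mathrm{length}(\gamma\cap H)$.

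Summing over $i$, the thick-segment contributions total at most $C$ times the number of orbit points chosen, which is in turn $\lesssim$ the length of the thick part of $\gamma$; the horoball contributions total at most a multiplicative constant times the total length of horoball chords; and the additive constants accrue once per horoball crossing, with the number of crossings controlled linearly by $\mathrm{length}(\gamma) = \dogo{g}$ (consecutive crossings being separated by a thick segment of definite positive length). The main obstacle is the horoball step: one must produce a genuine element $h_i\in G_\bp$ whose translate of $\zero$ lands where needed, and control the additive errors so that their sum over all crossings is absorbed into $O(\dogo{g})$ rather than accumulating uncontrollably. This is precisely where bounded-parabolicity (Lemma \ref{lemmaboundedparabolic}) is essential, with Rips' thin triangles condition (Proposition \ref{propositionrips}(ii)) enabling the passage between $\gamma$ and the orbit $G(\zero)$ along horospheres.
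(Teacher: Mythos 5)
The paper never proves this statement: it is quoted verbatim from \cite[Theorem 12.4.14]{DSU}, so there is no in-text argument to compare against. Your outline is, however, the standard cusp-excursion proof and is essentially the argument the cited reference runs: the geodesic $\geo{\zero}{g(\zero)}$ lies in $\CC_\zero$ (exactly, by definition of $\Hull_1$ --- no appeal to Proposition \ref{propositionrips} is needed there), one decomposes it into thick pieces covered by $G(B(\zero,\sigma))$ and chords through the horoballs of $\scrH$, reads off bounded-displacement generators (a finite set $F$, by strong discreteness) along the thick pieces, and uses bounded parabolicity (Lemma \ref{lemmaboundedparabolic}) to replace each horoball excursion by a single element of a conjugate of some $G_\bp$ whose weight $\ell_0$ is comparable to the chord length. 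Together with the trivial inequality $\dogo{h_1\cdots h_n}\leq\sum_i\ell_0(h_i)$ this yields both (i) and (ii).

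One step does not hold as you state it. Disjoint horoballs can be tangent or arbitrarily close, so consecutive horoball chords need \emph{not} be separated by a thick segment of definite positive length, and your linear bound on the number of crossings --- which is what controls the additive constants that accrue once per crossing --- does not follow as written. Two standard repairs: either shrink every horoball by a fixed depth $s>0$ (disjointness then forces the shrunken horoballs to be $2s$-separated, while the discarded collars lie within bounded distance of $\del H\cap\CC_\zero$ and hence, by Lemma \ref{lemmaboundedparabolic}, inside an enlarged thick part), or, more simply, absorb every chord of length $<1$ into the thick part: its points are within $\tfrac12+\sigma$ of $G(\zero)$ because the chord's endpoints lie on $\del H\cap\CC_\zero$, which is disjoint from $\bigcup\scrH$ and so covered by $G(B(\zero,\sigma))$. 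Then only chords of length $\geq 1$ count as crossings, and their number is at most $\dogo g$. Two smaller points you should also tidy: the additive term in your estimate $\dogo{h_i}\asymp_\plus 2\dist(\zero,H_\bp)+\mathrm{length}(\gamma\cap H)$ should really just be the uniform constant coming from $\rho$ in Lemma \ref{lemmaboundedparabolic} and the finiteness of $P$ (harmless either way); and at the end of the factorization $g_n(\zero)=g(\zero)$ only gives $g_n^{-1}g\in\Stab(G;\zero)$, a finite subgroup that must be absorbed into $F$.
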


%%%%%%%%%%%%%%%%%%%%%%%%%%%%%%%%%%
\draftnewpage
\section{Proof of Theorem \ref{theoremtukia}}
\label{sectiontukia}
%%%%%%%%%%%%%%%%%%%%%%%%%%%%%%%%%%

In this section, the notation and assumptions will be as in Theorem \ref{theoremtukia}.

It follows e.g from \cite[Theorem 6.2.3]{DSU} that a subgroup of $G$ is parabolic if and only if it is infinite and consists only of parabolic and elliptic elements. Since $\Phi$ is type-preserving, it follows that $\Phi$ preserves the class of parabolic subgroups, and also the class of maximal parabolic subgroups. But all maximal parabolic subgroups of $G$ are of the form $G_\xi$, where $\xi$ is a parabolic fixed point of $G$. It follows that there is a bijection $\phi:\Lbp(G)\to\Lbp(\w G)$ such that $\Phi(G_\xi) = \w G_{\phi(\xi)}$ for all $\xi\in\Lbp(G)$. The equivariance of $\phi$ implies that $\w P := \phi(P)$ is a complete set of inequivalent parabolic points for $\w G$.

Let $\dist_G$ and $\dist_{\w G}$ denote the weighted Cayley metrics on $G$ and $\w G$, respectively, with respect to the generating set and weight function of Theorem \ref{theoremGF}.

\begin{lemma}
\label{lemmatukia1}
$\dist_G \asymp_\times \dist_{\w G}\circ\Phi$.
\end{lemma}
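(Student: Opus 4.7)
The plan is to exploit that $\Phi$ essentially takes the generating set $E\cup F$ of $G$ (from Theorem \ref{theoremGF}) onto the analogous generating set $\w E\cup\w F$ of $\w G$, after some bounded correction coming from the finite pieces $F$ and $\w F$. By the discussion preceding the lemma, $\Phi(G_\bp)=\w G_{\phi(\bp)}$ for each $\bp\in P$, so $\Phi(E)=\w E$, where $\w E:=\bigcup_{\bq\in\w P}\w G_\bq$; the only mismatch is therefore between $\Phi(F)$ and $\w F$, both finite sets that will contribute only a bounded correction.

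First I would show that $\ell_0$ is approximately preserved by $\Phi$ on $E$. Fix $h\in E$, so $h\in G_\bp$ for some $\bp\in P$. By \eqref{tukia}, there exist constants $C_\bp,C_\bp'>0$ such that $\dogo{\Phi(h)}\leq C_\bp\dogo h+C_\bp'$, and conversely. Since $P$ is finite (a standard consequence of geometrical finiteness: every $G$-orbit of horoballs in $\scrH$ must meet $\scrH_\rho$ for some fixed $\rho$, which is finite by Definition \ref{definitionGF}(I)), these constants can be chosen uniform in $\bp$. Combined with the bound $\ell_0\geq 1$, which absorbs the additive constant, this yields $\ell_0(\Phi(h))\asymp_\times\ell_0(h)$ uniformly for $h\in E$.

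Next I would deal with $F$ by raw finiteness. Applying Theorem \ref{theoremGF} to $\w G$, the set $\w E\cup\w F$ generates $\w G$; so each image $\Phi(f)$ with $f\in F$ is a finite product of elements of $\w E\cup\w F$, and there are only finitely many such $f$. Let $M$ be the maximum weighted $\dist_{\w G}$-length of such expressions, enlarged so that the same bound holds for $\Phi^{-1}(\w f)$, $\w f\in\w F$, with respect to $E\cup F$.

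To conclude, by left-invariance of the weighted Cayley metrics it suffices to compare $\dist_G(e,g)$ with $\dist_{\w G}(e,\Phi(g))$. Take a near-optimal factorization $g=h_1\cdots h_n$ with $h_i\in E\cup F$; then $\Phi(g)=\Phi(h_1)\cdots\Phi(h_n)$. The factors with $h_i\in E$ each contribute at most $\ell_0(\Phi(h_i))\leq C\,\ell_0(h_i)$, while each factor with $h_i\in F$ can be rewritten as a word in $\w E\cup\w F$ of weighted length $\leq M\leq M\cdot\ell_0(h_i)$ (using $\ell_0\geq 1$). Summing and passing to the infimum gives $\dist_{\w G}(e,\Phi(g))\leq (C\vee M)\dist_G(e,g)$. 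The reverse inequality follows by running the symmetric argument with $\Phi^{-1}$, which is itself type-preserving and satisfies \eqref{tukia} by symmetry of the relation $\asymp_{+,\times,\bp}$. The main obstacle is ensuring the uniformity in $\bp$ of the constants in \eqref{tukia}, which is precisely why the finiteness of $P$ is used.
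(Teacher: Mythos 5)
Your proposal is correct and follows essentially the same route as the paper's (much terser) proof: identify $\Phi(E)=\w E$ from the discussion preceding the lemma, use \eqref{tukia} together with $\ell_0\geq 1$ and the finiteness of $P$ to get $\ell_0(\Phi(h))\asymp_\times\ell_0(h)$ uniformly on $E$, dismiss $F$ and $\w F$ by finiteness, and compare infima over factorizations symmetrically via $\Phi^{-1}$. The paper leaves all of these details implicit, so your write-up is simply a fuller version of the same argument.
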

\begin{proof}
Let $E$ and $F$ be as in Theorem \ref{theoremGF}, and let $\w E$ and $\w F$ be the corresponding sets for $\w G$. Since $\w P = \phi(P)$, we have $\w E = \Phi(E)$. On the other hand, for all $h\in E$, we have $\ell_0(h) \asymp_\times \ell_0(\Phi(h))$ by \eqref{tukia}. Thus, edges in the weighted Cayley graph of $G$ have roughly (multiplicatively asymptotically) the same weight as their corresponding edges in the weighted Cayley graph of $\w G$. (The sets $F$ and $\w F$ are both finite, and so their edges are essentially irrelevant.) The lemma follows.
\end{proof}

Thus, the map $\Phi(g(\zero)) := \Phi(g)(\zero)$ is a quasi-isometry between $G(\zero)$ and $\w G(\zero)$. At this point, we would like to extend $\Phi$ to an equivariant homeomorphism between $\Lambda$ and $\w\Lambda$. However, all known theorems that give such extensions, e.g. \cite[Theorem 6.5]{BonkSchramm}, require the spaces in question to be geodesic or at least roughly geodesic -- for the good reason that the extension theorems are false without this hypothesis\Footnote{A counterexample is given by letting
\begin{align*}
X_1 = X_2 = \mathbb R , && \dist_1(x,y) = \log(1 + |y - x|), && \dist_2(x,y) = \begin{cases} \dist_1(x,y) & xy\geq 0 \\ \dist_1(0,x) + \dist_1(0,y) & xy\leq 0 \end{cases},
\end{align*}
and letting $\Phi:X_1\to X_2$ be the identity map -- since $\#(\del X_1) = 1 < 2 = \#(\del X_2)$, $\Phi$ cannot be extended to a homeomorphism between $\del X_1$ and $\del X_2$. On the other hand, if one of the spaces in question is geodesic, then the extension theorem can be proven by isometrically embedding the other space into a geodesic hyperbolic metric space via \cite[Theorem 4.1]{BonkSchramm} -- a fact that however has no relevance to the present situation.} -- but the spaces $G(\zero)$ and $\w G(\zero)$ are not roughly geodesic. They are, however, embedded in the roughly geodesic metric spaces $\CC_\zero$ and $\w\CC_\zero$, which suggests the strategy of extending the map $\Phi$ to a quasi-isometry between $\CC_\zero$ and $\w\CC_\zero$. It turns out that this strategy works if we assume \eqref{tukia2}, and thus proves the existence of a \emph{quasisymmetric} equivariant homeomorphism between $\Lambda$ and $\w\Lambda$ in that case. Since we know that the equivariant homeomorphism is not necessarily quasisymmetric if \eqref{tukia2} fails (Example \ref{exampletukia} and Remark \ref{remarktukia}), this strategy can't be used to prove part (i) of Theorem \ref{theoremtukia}. Thus the proof splits into two parts at this point, depending on whether we have the stronger assumption \eqref{tukia2} that guarantees quasisymmetry, or only the weaker assumption \eqref{tukia}.

\subsection{Completion of the proof assuming \eqref{tukia2}}
The proof technique here is similar to \cite{Schwartz2}, as described to us by Marc Bourdon.

\begin{lemma}
\label{lemmatukia2}
Fix $\bp\in P$ and let $\w\bp = \phi(\bp)$. Let
\[
A = A(\bp) = \bigcup_{h\in G_\bp} \geo{h(\zero)}{\bp},
\]
and define a bijection $\psi = \psi_\bp:A\to \w A := A(\w\bp)$ by
\[
\psi(\geo{h(\zero)}{\bp}_t) = \geo{\Phi(h)(\zero)}{\w\bp}_{\alpha_\bp t}.
\]
Then $\psi$ is a quasi-isometry.
\end{lemma}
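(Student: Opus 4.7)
The plan is to verify that $\psi$ is a $(\alpha_\bp, O_\bp(1))$-quasi-isometry, i.e.\ bilipschitz with multiplicative constant $\alpha_\bp$ up to an additive error depending on $\bp$. Since $\psi$ is a bijection onto $\w A$ by construction (distinct geodesic rays in a CAT(-1) space converging to the same ideal point are disjoint, so the parameterization $(h, t) \mapsto \geo{h(\zero)}{\bp}_t$ of $A$ by $G_\bp \times [0,\infty)$ is well-defined), it suffices to check that $\dist(\psi(x_1), \psi(x_2)) \asymp_{+,\times,\bp} \dist(x_1, x_2)$ for all $x_1, x_2 \in A$.

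The key technical input is the following distance formula valid in any CAT(-1) space: writing $x_i = \geo{h_i(\zero)}{\bp}_{t_i}$ with $h_i \in G_\bp$ and setting $D := \dist(h_1(\zero), h_2(\zero)) = \dogo{h_1^{-1} h_2}$, I claim
\[
\dist(x_1, x_2) \asymp_{+,\bp} |t_1 - t_2| + \max\bigl(0,\, D - 2\max(t_1, t_2)\bigr).
\]
To establish this I would use the thin triangles condition (Proposition \ref{propositionrips}) together with Busemann function identities. Since $h_1, h_2 \in G_\bp$, the orbit $G_\bp(\zero)$ lies within a bounded $\busemann_\bp$-neighborhood of any $G_\bp$-invariant horosphere centered at $\bp$, so $\busemann_\bp(\zero, h_i(\zero))$ is bounded by $O_\bp(1)$ and hence $\busemann_\bp(\zero, x_i) \asymp_{+,\bp} t_i$. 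The formula then captures the standard hyperbolic contraction: moving a distance $t$ along a ray toward $\bp$ shrinks the transverse distance between two rays by approximately $2t$, until the two rays have become close, at which point the distance between points at parameters $t_1, t_2$ is controlled by $|t_1-t_2|$ up to an additive constant.

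Having the distance formula in hand, the remainder is essentially algebra. By the assumption \eqref{tukia2} applied to $h_1^{-1} h_2 \in G_\bp$, we obtain
\[
D' := \dist\bigl(\Phi(h_1)(\zero), \Phi(h_2)(\zero)\bigr) = \dogo{\Phi(h_1^{-1} h_2)} \asymp_{+,\bp} \alpha_\bp \dogo{h_1^{-1} h_2} = \alpha_\bp D.
\]
Applying the distance formula in $\w X$ to $\psi(x_i)=\geo{\Phi(h_i)(\zero)}{\w\bp}_{\alpha_\bp t_i}$ with orbit distance $D'$, and then pulling the factor $\alpha_\bp$ through the $|\cdot|$ and $\max(0,\cdot)$, yields
\[
\dist(\psi(x_1), \psi(x_2)) \asymp_{+,\bp} \alpha_\bp\bigl[|t_1 - t_2| + \max(0, D - 2\max(t_1,t_2))\bigr] \asymp_{+,\bp} \alpha_\bp\, \dist(x_1, x_2),
\]
which is the desired quasi-isometry.

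The main obstacle is the CAT(-1) distance formula above. It is transparent in the real hyperbolic model via a direct computation in the upper half-space, but the general CAT(-1) case requires some careful bookkeeping with the Gromov product at the ideal point $\bp$, the identity $\lb x|\bp\rb_\zero = \tfrac12(\dist(\zero,x) - \busemann_\bp(\zero,x))$, and a case split into the ``separated'' regime $D \geq 2\max(t_1, t_2)$ (where the distance behaves linearly in the parameters) and the ``merged'' regime $D < 2\max(t_1,t_2)$ (where the distance is bounded by $|t_1-t_2|+O_\bp(1)$). Once this bookkeeping is done, the subsequent transfer via $\Phi$ using \eqref{tukia2} is formal.
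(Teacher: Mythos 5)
Your proof is correct and takes essentially the same route as the paper's: both reduce the lemma to a coarse distance formula for pairs of points on the rays $\geo{h_i(\zero)}{\bp}$ — your expression $|t_1-t_2|+\max\bigl(0,\,D-2\max(t_1,t_2)\bigr)$ is algebraically identical to the paper's $|t_2-t_1|\vee(D-t_1-t_2)$ — established via the thin triangles condition, and then transfer it through $\Phi$ using \eqref{tukia2} applied to $h_1^{-1}h_2$. (Your parenthetical claim that distinct rays asymptotic to $\bp$ are disjoint is false in general CAT($-1$) spaces, e.g.\ in an $\R$-tree where such rays eventually merge, but this only concerns the pointwise well-definedness of $\psi$, which is harmless up to bounded error and is glossed over in the paper as well.)
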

\begin{proof}
Fix two points $x_i = \geo{h_i(\zero)}{\bp}_{t_i} \in A$, $i = 1,2$. Write $y_i = h_i(\zero)$, $i = 1,2$. Then
%\[
%\lb y_1|\bp\rb_{y_2} = \lb y_2|\bp\rb_{y_1} = t := \frac12\dist(y_1,y_2).
%\]
%It follows that
\begin{equation}
\label{distinhoroball}
\dist(x_1,x_2) \asymp_\plus |t_2 - t_1| \vee (\dist(y_1,y_2) - t_1 - t_2).
\end{equation}
(This can be seen e.g. by repeated application of Proposition \ref{propositionrips}(ii).) On the other hand, if we write $\w y_i = \Phi(h_i)(\zero)$, $\w t_i = \alpha_\bp t_i$, and $\w x_i = \geo{\w y_i}{\w\bp}_{\w t_i}$, then by \eqref{tukia2} we have $\dist(\w y_1,\w y_2) \asymp_\plus \alpha_\bp \dist(y_1,y_2)$; applying \eqref{distinhoroball} along with its tilded version, we see that $\dist(\w x_1,\w x_2) \asymp_\plus \alpha_\bp \dist(x_1,x_2)$.
\end{proof}

For $g(\bp)\in G(P) = \Lbp(G)$, write $A_{g(\bp)} = g(A_\bp)$ and $\psi_{g(\bp)} = \Phi(g)\circ\psi_\bp\circ g^{-1}$; then $\psi_{g(\bp)}: A_{g(\bp)}\to A_{\phi(g(\bp))}$ is a quasi-isometry, and the implied constants are independent of $g(\bp)$. Let
\[
S = S(G) = \bigcup_{\xi\in \Lbp(G)} A_\xi \supset G(\zero),
\]
and define $\psi:S\to \w S := S(\w G)$ by letting
\[
\psi(x) = \psi_\xi(x) \all \xi\in \Lbp(G) \all x\in A_\xi.
\]
Note that for $g\in G$, $\psi(g(\zero)) = \Phi(g)(\zero)$.
\begin{lemma}
\label{lemmapsiquasi}
$\psi$ is a quasi-isometry.
\end{lemma}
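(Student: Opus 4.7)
The plan is to upgrade two quasi-isometries already in hand $-$ the restriction $\psi|_{G(\zero)}$, which is a quasi-isometry onto $\w G(\w\zero)$ by composing Lemma \ref{lemmatukia1} with the orbit quasi-isometry of Theorem \ref{theoremGF}(ii), and each $\psi_\xi:A_\xi\to \w A_{\phi(\xi)}$, which is a quasi-isometry by Lemma \ref{lemmatukia2} $-$ into a single quasi-isometry on all of $S = G(\zero)\cup\bigcup_{\xi\in\Lbp(G)}A_\xi$. The equivariance formula $\psi_{g(\bp)} = \Phi(g)\circ\psi_\bp\circ g^{-1}$, combined with the fact that $g$ and $\Phi(g)$ are isometries, guarantees uniform quasi-isometry constants across each $G$-orbit of parabolic points; across the (finite) set $P$ of orbit representatives, a maximum gives uniformity everywhere.

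The only genuinely new content concerns pairs $x_1\in A_{\xi_1}$, $x_2\in A_{\xi_2}$ with $\xi_1\neq\xi_2$. Parametrizing $x_i = \geo{y_i}{\xi_i}_{t_i}$ with $y_i = g_i(\zero)$ the innermost orbit endpoint, I would apply Rips' thin-triangle condition (Proposition \ref{propositionrips}(ii)) to the geodesic quadrilateral $(y_1,x_1,x_2,y_2)$ together with the rays from $y_i$ to $\xi_i$, obtaining a decomposition of the form
\[
\dist(x_1,x_2) \asymp_\plus t_1 + \dist(y_1,y_2) + t_2.
\]
The intuition is that in a CAT($-1$) space the geodesic $\geo{x_1}{x_2}$ must first exit the horoball at $\xi_1$ (cost $\approx t_1$ back along $\geo{y_1}{\xi_1}$), cross the thick part (cost $\approx\dist(y_1,y_2)$), and re-enter the horoball at $\xi_2$ (cost $\approx t_2$); the disjointness of horoballs in the geometrically finite collection $\scrH$ prevents the geodesic from shortcutting. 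Combining this with \eqref{tukia2} applied to each $t_i$ and with Lemma \ref{lemmatukia1}+Theorem \ref{theoremGF} applied to the middle term yields
\[
\dist(\psi(x_1),\psi(x_2)) \asymp_\plus \alpha_{\bp_1}t_1 + \dist(\Phi(g_1)(\zero),\Phi(g_2)(\zero)) + \alpha_{\bp_2}t_2,
\]
so that $\dist(\psi(x_1),\psi(x_2)) \asymp_{\plus,\times} \dist(x_1,x_2)$ with multiplicative constants controlled by $\max_{\bp\in P}\alpha_\bp$ and its reciprocal.

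The main obstacle is making the decomposition identity precise in boundary cases: when some $t_i$ is comparable to $\dist(y_1,y_2)$ (so the "exit" from the horoball is not sharply defined and we must interpolate with the pure orbit-part estimate), when $\geo{x_1}{x_2}$ grazes a third horoball of $\scrH$ en route, and when $y_i$ admits multiple orbit representatives (any two of which must yield additively comparable parametrizations, using the $G_{\xi_i}$-action on $\CC_\zero\cap\del H_{\xi_i}$ via Lemma \ref{lemmaboundedparabolic}). Each of these is controlled by the geometrically-finite structure $-$ disjointness of $\scrH$, boundedness of the parabolic fixed points, and strong discreteness $-$ but tracking additive versus multiplicative constants through these cases is what makes the argument nontrivial.
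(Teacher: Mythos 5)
Your overall architecture matches the paper's: handle pairs in the same $A_{g(\bp)}$ by Lemma \ref{lemmatukia2}, handle orbit points by Lemma \ref{lemmatukia1} plus Theorem \ref{theoremGF}(ii), and get uniform constants from equivariance and the finiteness of $P$. The gap is in your treatment of the cross-cusp case: the decomposition $\dist(x_1,x_2)\asymp_\plus t_1+\dist(y_1,y_2)+t_2$, with $y_i$ the orbit endpoint of the ray of $A_{\xi_i}$ containing $x_i$, is false with uniform constants. The triangle inequality gives $\leq$, but the lower bound fails because the geodesic $\geo{x_1}{x_2}$ can exit the horoball at $\xi_1$ far from $y_1$: the shortcut happens \emph{inside} $H_{\xi_1}$, through the region where the parabolic direction is exponentially contracted, so disjointness of $\scrH$ does not prevent it. Concretely, in the upper half-plane with the cusp $\xi_1=\infty$, horoball $\{\Im z>1\}$ and parabolic generator $z\mapsto z+1$, take $x_1=(0,e^{t})$ on the ray based at $y_1=(0,1)$, and let $x_2$ lie at depth $t_2$ in a horoball centered at a large integer $N$ with $e^{t}\gg N$. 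Then $\dist(x_1,x_2)\asymp_\plus t+t_2$, whereas $t+\dist(y_1,y_2)+t_2\asymp_\plus t+2\log N+t_2$; the discrepancy $2\log N$ is unbounded over the orbit of cusps. Your proposed repair --- that any two orbit representatives for $x_i$ give additively comparable parametrizations --- is also false: valid base points for $x_1$ can be $\asymp 2t_1$ apart, so the middle term $\dist(y_1,y_2)$ is not well defined up to an additive constant. Since your estimate on the tilde side has the same defect, comparing two upper bounds yields no two-sided comparison of $\dist(\psi(x_1),\psi(x_2))$ with $\dist(x_1,x_2)$.

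The paper's proof repairs exactly this point. Instead of the a priori base points of the rays, it chooses intermediate orbit points adapted to the pair: push each $x_i$ a fixed distance $t$ toward $g_i(\bp_i)$ to get points $y_i$ inside the disjoint horoballs, let $z_i$ be the points where the geodesic $\geo{y_1}{y_2}$ crosses $\del H_{g_i(\bp_i)}$ (so that $\dist(y_1,y_2)$ splits \emph{exactly} as $\dist(y_1,z_1)+\dist(z_1,z_2)+\dist(z_2,y_2)$), and use Lemma \ref{lemmaboundedparabolic} to replace $z_i$ by nearby orbit points $w_i\in g_iG_{\bp_i}(\zero)$. Then $\dist(x_i,w_i)$ is an intra-cusp distance (both points lie in $A_{g_i(\bp_i)}$), controlled by the full strength of Lemma \ref{lemmatukia2} via the formula \eqref{distinhoroball} --- note this distance is $\asymp_\plus t_i\vee(\dist(y_i,w_i)-t_i)$, not $t_i$ --- and $\dist(w_1,w_2)$ is an orbit distance. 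Finally, rather than proving the image decomposition is additively exact, the paper uses the one-sided triangle inequality $\dist(\w w_1,\w w_2)+\sum_i\dist(\w x_i,\w w_i)\geq\dist(\w x_1,\w x_2)$ and obtains the reverse bound by symmetry of the roles of $G$ and $\w G$. You would need both of these devices --- the pair-adapted choice of $w_i$ via Lemma \ref{lemmaboundedparabolic}, and the triangle-inequality-plus-symmetry trick --- to make your argument work.
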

\begin{proof}
Fix two points $x_1,x_2\in S$. For each $i = 1,2$, write $x_i \in A_{g_i(\bp_i)}$ for some $g_i(\bp_i)\in \Lbp(G_i)$. If $g_1(\bp_1) = g_2(\bp_2)$, then $\dist(\psi(x_1),\psi(x_2)) \asymp_\plus \dist(x_1,x_2)$ by Lemma \ref{lemmatukia2}. Otherwise, let $t > 0$ be large enough so that the collection $\scrH = \{H_{g(\bp)} := g(H_{\bp,t}) : g\in G, \; \bp\in P\}$ is disjoint. Then $y_i := \geo{x_i}{g_i(\bp_i)}_t \in H_{g_i(\bp_i)}$. It follows that the geodesic $\geo{y_1}{y_2}$ intersects both $\del H_{g_1(\bp_1)}$ and $\del H_{g_2(\bp_2)}$ (cf. Figure \ref{figurepsiquasi}), say in the points $z_1,z_2$. By Lemma \ref{lemmaboundedparabolic}, there exist points $w_i\in g_i G_{\bp_i}(\zero)$ such that $\dist(z_i,w_i) \asymp_\plus 0$. To summarize, we have
\[
\dist(x_1,x_2) \asymp_{\plus,t} \dist(y_1,y_2) = \dist(z_1,z_2) + \sum_{i = 1}^2 \dist(y_i,z_i) \asymp_{\plus,t} \dist(w_1,w_2) + \sum_{i = 1}^2 \dist(x_i,w_i).
\]
As $x_i,w_i\in A_{g_i(\bp_i)}$, we have $\dist(\psi(x_i),\psi(w_i)) \asymp_\plus \dist(x_i,w_i)$ by Lemma \ref{lemmatukia2}. On the other hand, since $w_1,w_2\in G(\zero)$, we have $\dist(\w w_1,\w w_2) \asymp_{\plus,\times} \dist(w_1,w_2)$ by Lemma \ref{lemmatukia1} and Theorem \ref{theoremGF}(ii). (Here $\w x = \psi(x)$.) Thus,
\[
\dist(x_1,x_2) \asymp_{\plus,\times} \dist(\w w_1,\w w_2) + \sum_{i = 1}^2 \dist(\w x_i,\w w_i) \geq \dist(\w x_1,\w x_2).
\]
Since the situation is symmetric, the reverse inequality holds as well.
\end{proof}

\begin{figure}
\begin{center} 
\begin{tikzpicture}[line cap=round,line join=round,>=triangle 45,x=1.0cm,y=1.0cm]
\clip(-3.674,-3.239) rectangle (3.698,3.586);
\draw(0.0,0.0) circle (3.0cm);
\draw(0.0,2.0) circle (1.0cm);
\draw(-1.6765383941122631,-0.4770413063924982) circle (1.2569145430600153cm);
\draw (-2.884984519391117,-0.8227176446835247)-- (-0.8537077377087111,-0.24806336779043003);
\draw (0.0,3.0)-- (0.0,0.5410285663267452);
\draw [shift={(-4.432995402589801,3.34299557285843)}] plot[domain=5.372236452220289:5.900827991552244,variable=\t]({1.0*4.778027883453108*cos(\t r)+-0.0*4.778027883453108*sin(\t r)},{0.0*4.778027883453108*cos(\t r)+1.0*4.778027883453108*sin(\t r)});
\begin{scriptsize}
\draw[color=black] (0.51,2.093) node {$H_{g_2(p_2)}$};
\draw [fill=black] (-2.884984519391117,-0.8227176446835247) circle (1pt);
\draw[color=black] (-3.3,-0.887) node {$g_1(p_1)$};
\draw[color=black] (-2.0,0.305) node {$H_{g_1(p_1)}$};
\draw [fill=black] (0.0,3.0) circle (1pt);
\draw[color=black] (-0.013254332234670846,3.2) node {$g_2(p_2)$};
\draw [fill=black] (-0.8537077377087111,-0.24806336779043003) circle (1pt);
\draw[color=black] (-0.8250955547283336,-0.45) node {$x_1$};
\draw [fill=black] (0.0,0.5410285663267452) circle (1pt);
\draw[color=black] (0.30213211454936214,0.5204766261395189) node {$x_2$};
\draw [fill=black] (-1.5040817556787958,-0.43205612268083204) circle (1pt);
\draw[color=black] (-1.5209594597229017,-0.65) node {$y_1$};
\draw [fill=black] (0.0,1.56027231456143) circle (1pt);
\draw[color=black] (0.2855639263352058,1.5974088600596753) node {$y_2$};
\end{scriptsize}
\end{tikzpicture}
\caption{The proof of Lemma \ref{lemmapsiquasi}. The distance between $y_1$ and $y_2$ is broken up into three segments, each of which is coarsely asymptotically preserved upon applying $\psi$.}
\label{figurepsiquasi}
\end{center}
\end{figure}
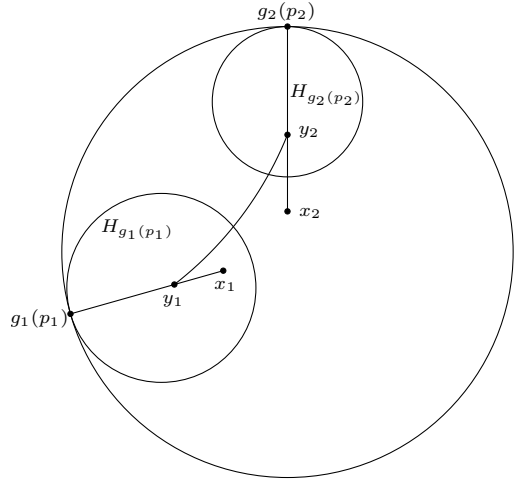

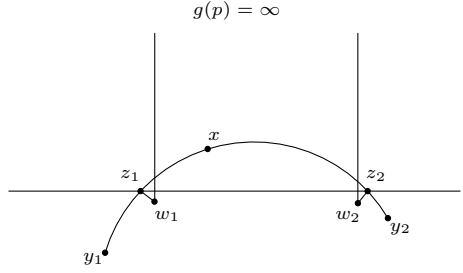
\begin{figure}
\begin{center} 
\begin{tikzpicture}[line cap=round,line join=round,>=triangle 45,x=1.0cm,y=1.0cm]
\clip(-3.331733658928572,0.17471897426577723) rectangle (3.5450834715133723,4.0257365673132615);
\draw (-3.0,1.5)-- (3.0,1.5);
\draw [shift={(0.26067583481434903,0.08708448064792945)}] plot[domain=0.5343314243396922:2.8507321995474353,variable=\t]({1.0*2.0675167655199447*cos(\t r)+-0.0*2.0675167655199447*sin(\t r)},{0.0*2.0675167655199447*cos(\t r)+1.0*2.0675167655199447*sin(\t r)});
\draw (-1.06,1.36)-- (-1.06,3.6);
\draw (1.64,1.34)-- (1.64,3.6);
\draw (-1.2487264677147425,1.5)-- (-1.0637012613578571,1.3605243994346345);
\draw (1.641128303962467,1.3391423475348692)-- (1.77007813734344,1.5);
\begin{scriptsize}
\draw [fill=black] (2.04,1.14) circle (1pt);
\draw[color=black] (2.2055990913229415,1.0) node {$y_2$};
\draw [fill=black] (-1.72,0.68) circle (1pt);
\draw[color=black] (-1.8726524590782814,0.5933078430752864) node {$y_1$};
\draw [fill=black] (-1.2487264677147425,1.5) circle (1pt);
\draw[color=black] (-1.3823054984728558,1.7055582659119821) node {$z_1$};
\draw [fill=black] (1.77007813734344,1.5) circle (1pt);
\draw[color=black] (1.8826876782413198,1.7055582659119821) node {$z_2$};
\draw [fill=black] (-1.0637012613578571,1.3605243994346345) circle (1pt);
\draw[color=black] (-0.891,1.17) node {$w_1$};
\draw [fill=black] (1.641128303962467,1.3391423475348692) circle (1pt);
\draw[color=black] (1.511,1.16) node {$w_2$};
%\draw [fill=black] (0.0,3.6) circle (1pt);
\draw[color=black] (0.03,3.9) node {$g(p) = \infty$};
\draw [fill=black] (-0.3562789079803063,2.0604046323337206) circle (1pt);
\draw[color=black] (-0.27005507563615844,2.2317842724153647) node {$x$};
\end{scriptsize}
\end{tikzpicture}
\caption{The proof of Lemma \ref{lemmaSCB}, in the upper half-space model (cf. e.g. \cite[\62.5.2]{DSU}). The thin triangles condition guarantees that $x$ is close to one of the geodesics $\geo{w_1}{g(p)}$, $\geo{w_2}{g(p)}$, both of which are contained in $S$.}
\label{figureSCB}
\end{center}
\end{figure}

\begin{lemma}
\label{lemmaSCB}
$S$ is cobounded in $C = \CC_\zero$.
\end{lemma}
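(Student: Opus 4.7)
My plan is to exploit the decomposition of $\CC_\zero$ furnished by condition (II) of Definition \ref{definitionGF}, namely $\CC_\zero \subset G(B(\zero,\sigma))\cup\bigcup\scrH$. Since the identity lies in every $G_\bp$, one has $g(\zero)=g\cdot e(\zero)\in A_{g(\bp)}$ for every $g\in G$ and $\bp\in P$, hence $G(\zero)\subset S$; consequently any point of $\CC_\zero$ within $\sigma$ of $G(\zero)$ is automatically within $\sigma$ of $S$, and it suffices to handle the case $x\in \CC_\zero\cap H$ for some horoball $H\in\scrH$. The strategy in that case, illustrated by Figure \ref{figureSCB}, is to bound the distance from $x$ to a single ray of $A_\xi$ by combining Rips' thin triangles condition (Proposition \ref{propositionrips}(ii)) with Lemma \ref{lemmaboundedparabolic}.

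In detail, let $\xi=\Center(H)$, which is a bounded parabolic point by the Roblin/DSU characterization of geometric finiteness, and write $\xi = g(\bp)$ with $g\in G$ and $\bp\in P$. Since $x\in\cl{\Hull_1(G(\zero))}$, we may approximate $x$ arbitrarily well by a point lying on some segment $\geo{y_1}{y_2}$ with $y_1,y_2\in G(\zero)$; as $G(\zero)\cap\bigcup\scrH=\emptyset$, this segment enters and exits $H$ at points $z_1,z_2\in \CC_\zero\cap\del H$. Applying Lemma \ref{lemmaboundedparabolic} at the model parabolic point $\bp$ (to the horoball $g^{-1}(H)$) and then transporting the conclusion by $g$ yields $h_1,h_2\in G_\xi$ with $\dist(z_i,h_i(g(\zero)))\lesssim_\plus 0$, each $h_i(g(\zero))$ being by construction the basepoint of a ray in $A_\xi$. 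Thin triangles on the ideal triangle $(z_1,z_2,\xi)$ place $x$ within a universal additive constant of, say, $\geo{z_1}{\xi}$; a further application of thin triangles to $(z_1,h_1(g(\zero)),\xi)$, whose only finite side $\geo{z_1}{h_1(g(\zero))}$ is uniformly bounded, shows that the Hausdorff distance between $\geo{z_1}{\xi}$ and $\geo{h_1(g(\zero))}{\xi}\subset A_\xi$ is also uniformly bounded. Chaining the two estimates gives $\dist(x,A_\xi)\lesssim_\plus 0$, and hence $\dist(x,S)\lesssim_\plus 0$, uniformly in $x$.

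The subtlety I want to flag is the ``basepoint shift'' in the invocation of Lemma \ref{lemmaboundedparabolic}: as stated, the lemma provides $z_i$ close to the $G_\xi$-orbit of $\zero$, whereas $A_\xi$ is built from the $G_\xi$-orbit of $g(\zero)$, and these two orbits differ by a shift of order $\dogo g$ which is unbounded as $\xi$ ranges over $\Lbp(G)$. Applying the lemma at $\bp\in P$ first, so that the ``$\zero$'' in its conclusion coincides with the basepoint of $A_\bp$, and only then transporting by $g$, sidesteps this shift and puts the error term back at the basepoint of $A_\xi$; this is precisely the same sleight of hand that drives the proof of Lemma \ref{lemmapsiquasi} above.
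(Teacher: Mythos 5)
Your proof is correct and follows essentially the same route as the paper's: dispose of the case $x\notin\bigcup\scrH$ via condition (II) of Definition \ref{definitionGF}, and otherwise use the entry/exit points $z_1,z_2\in\del H$ of a hull geodesic through $x$, Lemma \ref{lemmaboundedparabolic} to replace them by points of $gG_\bp(\zero)$, and thin triangles to push $x$ onto a ray of $A_{g(\bp)}$. The ``basepoint shift'' you flag is indeed glossed over in the paper's one-line invocation of Lemma \ref{lemmaboundedparabolic} (which yields $w_i\in gG_\bp(\zero)$ directly), and your conjugation through the model point $\bp$ is the right way to justify it.
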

\begin{proof}
Fix $x\in\CC_\zero$. If $x\notin \bigcup\scrH$, then $\dist(x,S)\leq \dist(x,G(\zero))\asymp_\plus 0$. So suppose $x\in H = H_{g(\bp)}$ for some $g\in G$, $\bp\in P$. Write $x\in \geo{y_1}{y_2}$ for some $y_1,y_2\in G(\zero)$. Then there exist $z_1,z_2\in \geo{y_1}{y_2}\cap \del H$ such that $x\in \geo{z_1}{z_2}$. By Lemma \ref{lemmaboundedparabolic}, there exist $w_1,w_2\in g G_\bp(\zero)$ such that $\dist(z_i,w_i)\asymp_\plus 0$. It follows that $\lb w_1|w_2\rb_x\asymp_\plus 0$. By Proposition \ref{propositionrips}, we have
\[
\dist(x,S) \leq \dist(x,S_{g(\bp)}) \leq \dist(x,\geo{w_1}{g(\bp)}\cup\geo{w_2}{g(\bp)}) \asymp_\plus 0
\]
(cf. Figure \ref{figureSCB}). This completes the proof.
\end{proof}

Thus, the embedding map from $S$ to $C$ is an equivariant quasi-isometry. Thus $S$, $C$, $\w S$, and $\w C$ are all equivariantly quasi-isometric. By \cite[Theorem 6.5]{BonkSchramm}, the quasi-isometry between $C$ and $\w C$ extends to a quasisymmetric homeomorphism between $\del C = \Lambda$ and $\del\w C = \w\Lambda$. This completes the proof of Theorem \ref{theoremtukia}(ii).

%Let $\Psi:C\to \w C$ be an equivariant quasi-isometry.
%
%\begin{lemma}
%For all $x,y,z\in C$,
%\[
%\lb \Psi(x)|\Psi(y)\rb_{\Psi(z)} \asymp_{\plus,\times} \lb x|y\rb_z.
%\]
%\end{lemma}
%\begin{proof}
%By the Morse Lemma (e.g. \ref[Theorem 9.38]{DrutuKapovich}), there exists $C > 0$ such that for all $x,y\in C$, the Hausdorff distance between $\Psi(\geo xy)$ and $\geo{\Psi(x)}{\Psi(y)}$ is less than $C$. Combining with Proposition \ref{propositionrips}(i), we see that
%\begin{align*}
%\lb \w x|\w y\rb_{\w z} &\asymp_\plus \dist(\w z,\geo{\w x}{\w y})
%&\asymp_\plus \dist(\w z,\Psi(\geo xy))
%\asymp_{\plus,\times} \dist(z,\geo xy) \asymp_\plus \lb x|y\rb_z.
%\end{align*}
%\end{proof}
%
%It follows that $\Psi$ sends Gromov sequences to Gromov sequences, and thus it extends uniquely to a map $\del\Psi:\del C = \Lambda\to\del\w C = \w\Lambda$ which is continuous on $\Lambda$. To show that $\del\Psi$ is quasisymmetric, take $\bp,\eta_1,\eta_2\in \Lambda$, 

\subsection{Completion of the proof assuming only \eqref{tukia}}
We begin by recalling the Morse lemma:

\begin{definition}
\label{definitionquasigeodesic}
A path $\gamma:[a,b]\to X$ is a \emph{$K$-quasigeodesic} if for all $a\leq t_1 < t_2 \leq b$,
\[
\frac1K (t_2 - t_1) - K \leq \dist(\gamma(t_1),\gamma(t_2)) \leq K(t_2 - t_1) + K.
\]
(In other words, $\gamma$ is a $K$-quasigeodesic if $\dist(\gamma(t_1),\gamma(t_2)) \asymp_{\plus,\times} t_2 - t_1$, and the implied constants are both equal to $K$.)
\end{definition}

\begin{lemma}[Morse Lemma, {\cite[Theorem 9.38]{DrutuKapovich}}]
\label{lemmamorse}
For every $K > 0$, there exists $K_2 > 0$ such that the Hausdorff distance between any $K$-quasigeodesic $\gamma$ and the geodesic $\geo{\gamma(a)}{\gamma(b)}$ is at most $K_2$.
\end{lemma}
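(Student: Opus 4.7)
The plan is to prove the Morse lemma in the Gromov hyperbolic setting, which applies since $X$ is CAT$(-1)$ and hence hyperbolic with some thin-triangles constant $\delta$ (Proposition \ref{propositionrips}(ii)).

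First I would reduce to \emph{tame continuous quasi-geodesics}. Given a $K$-quasi-geodesic $\gamma:[a,b]\to X$, form $\bar\gamma$ by concatenating geodesic segments between consecutive integer parameter values $\gamma(a+n), \gamma(a+n+1)$. A direct estimate shows that $\bar\gamma$ is a continuous $K'$-quasi-geodesic for some $K' = K'(K)$ with the additional tameness property that its arc-length grows at most linearly in the parameter, and that every point of the image of $\gamma$ lies within distance $K$ of the image of $\bar\gamma$. Thus it suffices to bound the Hausdorff distance between $\bar\gamma$ and $\eta := \geo{\bar\gamma(a)}{\bar\gamma(b)}$.

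The heart of the argument is an exponential divergence estimate: there exists $C = C(\delta)$ such that any continuous path from $p$ to $q$ whose image avoids the open $D$-neighborhood of $\geo{p}{q}$ has length at least $2^{D/C}$. I would prove this via an iterated bisection argument using the thin triangles condition. Given this estimate, let $D := \sup_s \dist(\bar\gamma(s),\eta)$ and choose a maximal sub-interval $[s_1,s_2]$ on which $\bar\gamma$ stays outside the open $(D/2)$-neighborhood of $\eta$. On one hand, tameness combined with the quasi-geodesic condition yields an upper bound on the length of $\bar\gamma|_{[s_1,s_2]}$ that is linear in $\dist(\bar\gamma(s_1),\bar\gamma(s_2))$, which in turn is comparable to $\dist(y_1,y_2) + O(D)$ where $y_i$ is the nearest-point projection of $\bar\gamma(s_i)$ to $\eta$. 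On the other hand, the exponential divergence estimate applied to $\bar\gamma|_{[s_1,s_2]}$ relative to $\geo{y_1}{y_2}$ gives a lower bound on its length exponential in $D$. Comparing the two bounds constrains $D$ in terms of $K$ and $\delta$ alone.

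The reverse inclusion — that every point of $\eta$ lies near $\bar\gamma$ — then follows by a short thin-triangles argument applied to the quadrilateral with vertices $\bar\gamma(a), \bar\gamma(s_1), \bar\gamma(s_2), \bar\gamma(b)$, combined with the one-sided bound just established. The hard part will be the exponential divergence estimate, whose iterative construction requires careful bookkeeping of the constants at each bisection step; all remaining estimates are routine. Since this is a well-known classical result, appearing in \cite[Theorem 9.38]{DrutuKapovich} as well as in Bridson--Haefliger Chapter III.H.1, in practice I would simply invoke the reference rather than reproduce the full proof.
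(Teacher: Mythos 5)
The paper gives no proof of this lemma at all --- it is stated purely as a citation to \cite[Theorem 9.38]{DrutuKapovich} --- so your concluding plan to simply invoke the reference matches the paper exactly. Your sketched argument (taming the quasi-geodesic, the exponential divergence estimate via iterated bisection, and the two-sided Hausdorff bound) is the standard and correct proof from Bridson--Haefliger/Drutu--Kapovich, so there is nothing to object to.
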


\begin{lemma}
\label{lemmaquasi}
Fix $h_1,\ldots,h_n\in E\cup F$, let $g_k = h_1\cdots h_k$ and $x_k = g_k(\zero)$ for all $k = 0,\ldots,n$, and suppose that
\begin{equation}
\label{quasi}
\dist(x_k,x_\ell) \asymp_\times \sum_{i = k + 1}^\ell \ell_0(h_i) \all 0\leq k < \ell \leq n.
\end{equation}
Then the path $\gamma = \bigcup_{k = 0}^{n - 1} \geo{x_k}{x_{k + 1}}$ is a $K$-quasigeodesic, where $K > 0$ is independent of $h_1,\ldots,h_n$.
\end{lemma}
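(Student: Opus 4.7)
The plan is to parametrize $\gamma$ by arc length: set $s_k := \sum_{i=1}^k \dogo{h_i}$ and view $\gamma$ as a map $[0,s_n]\to X$ with $\gamma(s_k) = x_k$ that is a geodesic on each interval $[s_{k-1},s_k]$. The upper inequality in Definition \ref{definitionquasigeodesic}, namely $\dist(\gamma(t_1),\gamma(t_2))\leq t_2-t_1$, is automatic from the triangle inequality applied along $\gamma$, so the real task is the reverse inequality: produce a constant $K$, depending only on the multiplicative implied constant of \eqref{quasi} (and the hyperbolicity of $X$), such that $t_2-t_1\leq K\dist(\gamma(t_1),\gamma(t_2))+K$ for all $t_1\leq t_2$ in $[0,s_n]$.

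Given such $t_1\leq t_2$, I will identify the unique indices $k\leq\ell$ with $t_1\in[s_{k-1},s_k]$ and $t_2\in[s_{\ell-1},s_\ell]$. The case $k=\ell$ is immediate since $\gamma$ restricted to this segment is a geodesic. For $k<\ell$, the first step is to combine the lower bound furnished by \eqref{quasi},
\[
\dist(x_{k-1},x_\ell) \;\gtrsim_\times\; \sum_{i=k}^\ell \ell_0(h_i) \;\geq\; s_\ell - s_{k-1} \;\geq\; t_2 - t_1,
\]
with the triangle inequality
\[
\dist(x_{k-1},x_\ell) \;\leq\; (t_1 - s_{k-1}) + \dist(\gamma(t_1),\gamma(t_2)) + (s_\ell - t_2),
\]
to obtain a skeleton estimate $\dist(\gamma(t_1),\gamma(t_2))\gtrsim_\times (t_2-t_1)-(t_1-s_{k-1})-(s_\ell-t_2)$.

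The main obstacle will be absorbing the partial-segment errors $t_1-s_{k-1}\leq\dogo{h_k}$ and $s_\ell-t_2\leq\dogo{h_\ell}$, which a priori may be as large as a full adjacent segment length. My strategy is to choose the base points adaptively: when $\gamma(t_1)$ is closer to $x_k$ than to $x_{k-1}$ I will expand the triangle inequality around $x_k$ instead, effectively replacing the index $k-1$ by $k$, and symmetrically at the other endpoint. After this reduction each partial error is at most half of the corresponding $\dogo{h_k},\dogo{h_\ell}$. Since both $\ell_0(h_k)$ and $\ell_0(h_\ell)$ appear in the sum $\sum_{i=k}^\ell\ell_0(h_i)$ that is controlled by $\dist(x_{k-1},x_\ell)$ via \eqref{quasi}, a second application of the triangle inequality allows one to write $\dogo{h_k}+\dogo{h_\ell}\lesssim_\times \dist(\gamma(t_1),\gamma(t_2))$, and these residual terms can then be absorbed into a multiple of $\dist(\gamma(t_1),\gamma(t_2))$, producing the desired bound. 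The resulting $K$ depends only on the implied constant of \eqref{quasi} and is therefore uniform in $h_1,\ldots,h_n$; if needed, Proposition \ref{propositionrips} provides the thin-triangles input that justifies the adaptive replacement quantitatively.
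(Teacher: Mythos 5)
Your upper bound and your ``skeleton estimate'' are fine, but the absorption step at the end is a genuine gap, and it is not one that can be patched within your framework. After the adaptive replacement the base points become $x_k$ and $x_{\ell-1}$, so the vertex distance that \eqref{quasi} controls is $\dist(x_k,x_{\ell-1})\gtrsim_\times\sum_{i=k+1}^{\ell-1}\ell_0(h_i)$, a sum containing \emph{neither} $\ell_0(h_k)$ nor $\ell_0(h_\ell)$; the two half-segment errors of size $\tfrac12\dogo{h_k}$ and $\tfrac12\dogo{h_\ell}$ then have nothing to be absorbed into. (If you instead keep $x_{k-1}$ and $x_\ell$, the sum does contain both terms, but \eqref{quasi} returns them only multiplied by its implied constant $c$, while the errors enter with coefficient $\tfrac12$ --- a net loss unless $c>\tfrac12$, which you cannot assume.) The inequality you invoke, $\dogo{h_k}+\dogo{h_\ell}\lesssim_\times\dist(\gamma(t_1),\gamma(t_2))$, is in fact false: in the upper half-plane model of $\H_\R^2$ with $\zero=i$, let $h_1=h_2$ be the parabolic translation $x\mapsto x+T$. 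The word $(h_1,h_2)$ satisfies \eqref{quasi} with multiplicative constants in $[\tfrac12,1]$, yet the midpoints $z,w$ of $\geo{x_0}{x_1}$ and $\geo{x_1}{x_2}$ both sit near height $T/2$ at horizontal separation $T$, so $\dist(z,w)=O(1)$ while their arc-length separation along $\gamma$ is about $2\log T$. No triangle-inequality bookkeeping can exclude this configuration, because it uses only the numerical content of \eqref{quasi}, which the configuration satisfies.

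The real content of the lemma is precisely the point your proposal skips: one must show that distinct segments of $\gamma$ penetrate \emph{distinct} horoballs of the invariant family $\scrH$, so that a point $z$ lying deep in the horoball entered by $\geo{x_{k-1}}{x_k}$ is forced to be far from every point $w$ of a later segment. This is where the hyperbolic geometry enters: once one knows $g^{-1}(w)\notin H_{\bp,t}$ for a uniform $t$ (with $g=g_{k-1}$ and $h_k\in G_\bp$), the Busemann function gives $\dist(z,w)\geq\busemann_\bp(g^{-1}(w),g^{-1}(z))\gtrsim_\plus\min\bigl(\dist(z,x_{k-1}),\dist(z,x_k)\bigr)-t$, and only then does the vertex-level estimate from \eqref{quasi} finish the job. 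Establishing that exclusion requires the disjointness of $\scrH$, the bounded parabolicity of the points of $P$, and --- crucially --- hypothesis \eqref{quasi} used structurally, to rule out the degenerate case in which two segments dive into the same horoball (exactly what happens in the example above). Proposition \ref{propositionrips} alone does not substitute for this.
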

\begin{proof}
Fix $0 <  k \leq \ell < n$ and points $z\in\geo{x_{k - 1}}{x_k}$, $w\in\geo{x_\ell}{x_{\ell + 1}}$. To show that $\gamma$ is a quasigeodesic, it suffices to show that
\begin{equation}
\label{ETSquasi}
\dist(z,w) \gtrsim_{\plus,\times} \dist(z,x_k) + \dist(x_k,x_\ell) + \dist(x_\ell,w).
\end{equation}
\begin{claim}
\label{claimquasi}
$\dist(z,w)\gtrsim_\plus \min(\dist(z,x_{k - 1}),\dist(z,x_k))$.
\end{claim}
\begin{subproof}
If $h_k\in F$, then $\dist(z,x_k) \leq \dist(x_{k - 1},x_k) \asymp_\plus 0$, so $\dist(z,w)\gtrsim_\plus \dist(z,x_k)$. Thus, suppose that $h_k\in E$; then $h_k\in G_\bp$ for some $\bp\in P$. Let $g = g_{k - 1}$; since $g^{-1}(z) \in \geo\zero{h_k(\zero)}$, by Proposition \ref{propositionrips}(i) we have $\dist(g^{-1}(z),\geo y\bp) \asymp_\plus 0$, where either $y = \zero$ or $y = h_k(\zero)$.
\begin{subclaim}
\label{subclaimquasi}
There exists $t > 0$ independent of $h_1,\ldots,h_n$ such that $g^{-1}(w)\notin H_{\bp,t}$.
\end{subclaim}
(Cf. Figure \ref{figurequasi}.)

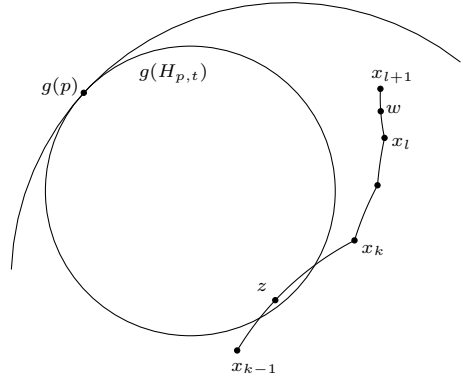
\begin{figure}
\begin{center} 
\begin{tikzpicture}[line cap=round,line join=round,>=triangle 45,x=1.0cm,y=1.0cm]
\clip(-3.8491074720597833,-2.1266876752711386) rectangle (3.5797459638630347,3.7907291690819807);
\draw [shift={(0.0,-0.0)}] plot[domain=0.9045225827260511:3.1035675129286187,variable=\t]({1.0*3.6889564920177627*cos(\t r)+-0.0*3.6889564920177627*sin(\t r)},{0.0*3.6889564920177627*cos(\t r)+1.0*3.6889564920177627*sin(\t r)});
\draw(-1.3068803163851197,1.1823748155886806) circle (1.9266288587793043cm);
\draw [shift={(2.685346326431731,-2.9662333458959456)}] plot[domain=2.049120890160455:2.6001311772486844,variable=\t]({1.0*3.9319392309623806*cos(\t r)+-0.0*3.9319392309623806*sin(\t r)},{0.0*3.9319392309623806*cos(\t r)+1.0*3.9319392309623806*sin(\t r)});
\draw [shift={(4.767483130446365,-0.6701022273279595)}] plot[domain=2.648334556608196:2.8438017124837423,variable=\t]({1.0*4.071162818980422*cos(\t r)+-0.0*4.071162818980422*sin(\t r)},{0.0*4.071162818980422*cos(\t r)+1.0*4.071162818980422*sin(\t r)});
\draw [shift={(5.978973518661347,0.8697896686860462)}] plot[domain=2.9284471092129145:3.0609325657134683,variable=\t]({1.0*4.812997880472604*cos(\t r)+-0.0*4.812997880472604*sin(\t r)},{0.0*4.812997880472604*cos(\t r)+1.0*4.812997880472604*sin(\t r)});
\draw [shift={(3.7794856280301645,2.426664585692047)}] plot[domain=3.0967788621557717:3.353471214637198,variable=\t]({1.0*2.5618834704300895*cos(\t r)+-0.0*2.5618834704300895*sin(\t r)},{0.0*2.5618834704300895*cos(\t r)+1.0*2.5618834704300895*sin(\t r)});
\begin{scriptsize}
\draw [fill=black] (-2.723168609785217,2.4885242057654264) circle (1pt);
\draw[color=black] (-3.036353833196826,2.5787281286723056) node {$g(p)$};
\draw[color=black] (-1.53,2.75) node {$g(H_{p,t})$};
\draw [fill=black] (-0.6841540363190708,-0.9397552627666829) circle (1pt);
\draw[color=black] (-0.45369,-1.18728) node {$x_{k-1}$};
\draw [fill=black] (0.8755044855278509,0.524413961824313) circle (1pt);
\draw[color=black] (1.127,0.397) node {$x_k$};
\draw [fill=black] (1.181623953260003,1.257585677919404) circle (1pt);
%\draw[color=black] (1.355367583817047,1.1528445517197468) node {$I$};
\draw [fill=black] (1.2748920600343736,1.887908634247394) circle (1pt);
\draw[color=black] (1.4994382699732513,1.81) node {$x_{l}$};
\draw [fill=black] (1.2242241363497024,2.2425841000400935) circle (1pt);
\draw[color=black] (1.398,2.27) node {$w$};
\draw [fill=black] (1.2201742115269398,2.541433873479229) circle (1pt);
\draw[color=black] (1.38,2.7) node {$x_{l+1}$};
\draw [fill=black] (-0.17745538908965663,-0.2709573946084274) circle (1pt);
\draw[color=black] (-0.3556927085260205,-0.10193299599850501) node {$z$};
\end{scriptsize}
\end{tikzpicture}
\caption{In Subclaim \ref{subclaimquasi}, the geodesics $\geo{x_{k - 1}}{x_k}$ and $\geo{x_\ell}{x_{\ell + 1}}$ cannot penetrate the same cusp, thus guaranteeing some distance between $z$ and $w$.}
\label{figurequasi}
\end{center}
\end{figure}
\begin{subproof}
If $h_{\ell + 1}\in F$, then $\dist(g^{-1}(w),g^{-1}(x_{\ell + 1})) \asymp_\plus 0$, in which case the subclaim follows from the fact that $\bp$ is a bounded parabolic point. Thus suppose $h_{\ell + 1}\in E$; then $h_{\ell + 1}\in G_\eta$ for some $\eta\in P$. Let $k = g_\ell$; since $k^{-1}(w)\in \geo\zero{h_{\ell + 1}(\zero)}$, by Proposition \ref{propositionrips}(i) we have $\dist(k^{-1}(w),\geo p\eta) \asymp_\plus 0$, where either $p = \zero$ or $p = h_k(\zero)$. In particular $\busemann_\eta(\zero,k^{-1}(w))\gtrsim_\plus 0$, so by the disjointness of the family $\scrH$, there exists $t > 0$ such that $k^{-1}(w)\notin j(H_{\bq,t})$ for all $\bq\in P$ and $j\in G$ such that $j(\bq)\neq \eta$. In particular, letting $j = k^{-1} g = (h_k\cdots h_\ell)^{-1}$ and $\bq = \bp$, we have $g^{-1}(w)\notin H_{\bp,t}$ unless $j(\bp) = \eta$. But if $j(\bp) = \eta$, then $j = \id$ due to the minimality $P$, and this contradicts \eqref{quasi}.
\end{subproof}
It follows that
\begin{align*}
\dist(z,w) \geq \busemann_\bp(g^{-1}(w),g^{-1}(z))
&= \busemann_\bp(\zero,g^{-1}(z)) - \busemann_\bp(\zero,g^{-1}(w))\\
&= \dist(y,g^{-1}(z)) - \busemann_\bp(\zero,g^{-1}(w)) \gtrsim_\plus \dist(y,g^{-1}(z)) - t.
\end{align*}
Applying $g$ to both sides finishes the proof of Claim \ref{claimquasi}.
\end{subproof}

A similar argument shows that $\dist(z,w)\gtrsim_\plus \min(\dist(w,x_\ell),\dist(w,x_{\ell + 1}))$. Now let $y_1\in \{x_{k - 1},x_k\}$ and $y_2\in \{x_\ell,x_{\ell + 1}\}$ be such that
\begin{equation}
\label{zwzyzwwy}
\dist(z,w) \gtrsim_\plus \dist(z,y_1) \text{ and } \dist(z,w) \gtrsim_\plus \dist(w,y_2).
\end{equation}
Then the triangle inequality gives $\dist(z,w)\gtrsim_{\plus,\times} \dist(y_1,y_2)$. On the other hand, \eqref{quasi} implies that $\dist(y_1,y_2) \gtrsim_{\plus,\times} \dist(y_1,x_k) + \dist(x_k,x_\ell) + \dist(x_\ell,y_2)$. Combining with \eqref{zwzyzwwy} and using the triangle inequality gives \eqref{ETSquasi}.
\end{proof}

\begin{lemma}
\label{lemmagromovquasi}
For all $x,y,z\in G(\zero)$,
\[
\lb \w x | \w y\rb_{\w z} \asymp_{\plus,\times} \lb x | y \rb_z.
\]
\end{lemma}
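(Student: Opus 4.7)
The plan is to reduce the Gromov product $\lb x|y\rb_z$ to a distance from $\zero$ to a quasigeodesic built from a weighted-Cayley-geodesic word via Lemma \ref{lemmaquasi}, then to compare with the analogous construction on the $\w G$-side using the machinery already assembled. First, by isometry-invariance of the Gromov product together with the $\Phi$-equivariance of $\psi$, writing $z = g_z(\zero)$ one has
\[
\lb x|y\rb_z = \lb g_z^{-1}(x) \,|\, g_z^{-1}(y)\rb_\zero, \qquad \lb \w x|\w y\rb_{\w z} = \lb \psi(g_z^{-1}(x)) \,|\, \psi(g_z^{-1}(y))\rb_\zero,
\]
so it suffices to prove the lemma when $z = \zero$. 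Write $x = g_x(\zero)$, $y = g_y(\zero)$, fix a $\dist_G$-geodesic word $h_1,\dots,h_n\in E\cup F$ for $g_x^{-1}g_y$, and set $g_k := g_x h_1\cdots h_k$, $x_k := g_k(\zero)$, $\w g_k := \Phi(g_k)$, and $\w x_k := \w g_k(\zero)$. Theorem \ref{theoremGF}(ii) together with geodesicity of the word verifies the hypothesis of Lemma \ref{lemmaquasi}, so $\gamma := \bigcup_k \geo{x_k}{x_{k+1}}$ is a uniform quasigeodesic from $x$ to $y$; Lemma \ref{lemmatukia1} combined with \eqref{tukia} yields the analogous conclusion on the $\w G$-side for $\w\gamma := \bigcup_k \geo{\w x_k}{\w x_{k+1}}$.

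Proposition \ref{propositionrips}(i) and the Morse Lemma (Lemma \ref{lemmamorse}) then give
\[
\lb x|y\rb_\zero \asymp_\plus \dist(\zero, \gamma) = \min_k \dist(\zero, \geo{x_k}{x_{k+1}}) \asymp_\plus \min_k \lb x_k|x_{k+1}\rb_\zero,
\]
and the analogous estimate on the $\w G$-side. Since $\asymp_{\plus,\times}$ commutes with finite minima, it suffices to prove the per-segment estimate $\lb x_k | x_{k+1}\rb_\zero \asymp_{\plus,\times} \lb \w x_k | \w x_{k+1}\rb_\zero$ uniformly in $k$. Expanding $\lb x_k|x_{k+1}\rb_\zero = \tfrac12(\dogo{g_k} + \dogo{g_{k+1}} - \dogo{h_{k+1}})$ and analogously on the $\w G$-side, Theorem \ref{theoremGF}(ii) combined with Lemma \ref{lemmatukia1} controls the first two norms and \eqref{tukia} controls the third, each up to $\asymp_{\plus,\times}$. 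The case $h_{k+1}\in F$ is immediate, since then $\dogo{h_{k+1}}$ is uniformly bounded and both sides of the per-segment claim reduce to $\dogo{g_k}$ and $\dogo{\w g_k}$, respectively, up to additive constants.

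The hard part will be the parabolic case $h_{k+1}\in G_\bp\subset E$, where all three norms can be simultaneously large and the multiplicative distortions in their $\asymp_{\plus,\times}$-preservation relations need not agree, leaving the signed combination $\tfrac12(A+B-C)$ delicate. To handle this, I would combine the horoball geometry of Section \ref{subsectionhoroballs} with Lemma \ref{lemmaboundedparabolic}: the per-segment Gromov product encodes how deeply the geodesic from $x_k$ to $x_{k+1}$ plunges into the horoball $g_k(H_\bp)$, hence the distance from $\zero$ to this horoball, which in turn can be expressed up to an additive constant as a distance between $\zero$ and some $g_k G_\bp$-orbit point on $\partial(g_k H_\bp)$. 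Such orbit distances are uniformly $\asymp_{\plus,\times}$-preserved by the orbit-level quasi-isometry coming from Theorem \ref{theoremGF}(ii) and Lemma \ref{lemmatukia1}, yielding the per-segment estimate and, after taking minima, the conclusion.
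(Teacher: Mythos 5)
Your reduction to the uniform per-segment estimate $\lb x_k|x_{k+1}\rb_\zero\asymp_{\plus,\times}\lb\w x_k|\w x_{k+1}\rb_\zero$ is exactly the paper's argument (quasigeodesic word from Theorem \ref{theoremGF}(ii), Lemma \ref{lemmaquasi}, the Morse Lemma, Proposition \ref{propositionrips}(i), and the easy case $h_{k+1}\in F$), and you correctly identify the crux: termwise $\asymp_{\plus,\times}$-control of $\dogo{g_k}$, $\dogo{g_{k+1}}$, $\dogo{h_{k+1}}$ does not control the signed combination $\tfrac12(A+B-C)$. However, your proposed resolution of the parabolic case does not work. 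The per-segment Gromov product is \emph{not} comparable to $\dist(\zero,g_k(H_{\bp,t}))$: in the upper half-space model with $g_k(\bp)=\infty$ and the relevant horosphere at height $1$, take $x_k=(0,1)$, $x_{k+1}=(R,1)$ and $\zero=(S,1)$ with $1\ll S\ll R$; then $\lb x_k|x_{k+1}\rb_\zero\approx\log S$ is arbitrarily large while $\dist(\zero,g_k(H_{\bp,t}))$ stays bounded. The Gromov product depends on where $\zero$ sits relative to the two endpoints \emph{along} the horosphere, not merely on its Busemann height, so it cannot be rewritten as a distance from $\zero$ to the horoball, nor to a single nearby orbit point on its boundary; the first link of your chain breaks, and the same false identification would be needed again on the $\w G$ side.

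What closes the parabolic case (and is what the paper does) is the two-sided comparison $\lb x_k|x_{k+1}\rb_\zero\asymp_{\plus,\times}\min(\dox{x_k},\dox{x_{k+1}})$, with multiplicative constant $2$. The bound $\lb x_k|x_{k+1}\rb_\zero\leq\min(\dox{x_k},\dox{x_{k+1}})$ is trivial. For the reverse, write $g=g_k$ and $h=h_{k+1}\in G_\bp$, and insert the parabolic point via Gromov's inequality:
\[
\lb x_k|x_{k+1}\rb_\zero=\lb \zero|h(\zero)\rb_{g^{-1}(\zero)}\gtrsim_\plus\min\bigl(\lb \zero|\bp\rb_{g^{-1}(\zero)},\lb h(\zero)|\bp\rb_{g^{-1}(\zero)}\bigr).
\]
For $y\in\{\zero,h(\zero)\}$ the point $y$ lies on the horosphere through $\zero$ centered at $\bp$ (because $h$ fixes $\bp$ neutrally), while $g^{-1}(\zero)\in G(\zero)$ lies outside $H_{\bp,t}$; hence the Busemann cocycle $\busemann_\bp(g^{-1}(\zero),y)$ is bounded below by $-t$ and therefore $\lb y|\bp\rb_{g^{-1}(\zero)}\gtrsim_\plus\tfrac12\dist(g^{-1}(\zero),y)$, which equals $\tfrac12\dox{x_k}$ or $\tfrac12\dox{x_{k+1}}$ as appropriate. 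This factor $\tfrac12$ is precisely the source of the multiplicative constant in the statement of the lemma, so no purely additive argument could succeed here. Since $\min(\dox{x_k},\dox{x_{k+1}})\asymp_{\plus,\times}\min(\dox{\w x_k},\dox{\w x_{k+1}})$ by the orbit quasi-isometry (Theorem \ref{theoremGF}(ii) and Lemma \ref{lemmatukia1}), combining the lower bound on one side with the trivial upper bound on the other yields the per-segment estimate; you should replace the final paragraph of your proposal with this argument.
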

\begin{proof}
Fix $g_1,g_2\in G$, and we will show that
\begin{equation}
\label{ETSgromovquasi}
\lb \w g_1(\zero) | \w g_2(\zero)\rb_\zero \lesssim_{\plus,\times} \lb g_1(\zero) | g_2(\zero)\rb_\zero.
\end{equation}
The reverse inequality will then follow by symmetry. By Theorem \ref{theoremGF}(ii), there exists a sequence $h_1,\ldots,h_n\in E\cup F$ such that $g_2 = g_1 h_1\cdots h_n$ and satisfying \eqref{quasi}. By Lemma \ref{lemmatukia1}, the sequence $\w h_1,\ldots,\w h_n\in \w E\cup \w F$ also satisfies \eqref{quasi}. Let $x_k = g_1 h_1 \cdots h_k(\zero)$. By Lemma \ref{lemmaquasi}, the paths
\begin{align*}
\gamma &= \bigcup_{k = 0}^{n - 1} \geo{x_k}{x_{k + 1}}\\
\w\gamma &= \bigcup_{k = 0}^{n - 1} \geo{\w x_k}{\w x_{k + 1}}
\end{align*}
are quasigeodesics. So by Lemma \ref{lemmamorse}, $\gamma$ and $\w\gamma$ lie within a bounded Hausdorff distance of the geodesics they represent, namely $\geo{x_0}{x_n}$ and $\geo{\w x_0}{\w x_n}$. Combining with Proposition \ref{propositionrips}(i), we have
\[
\lb g_1(\zero) | g_2(\zero)\rb_\zero = \lb x_0 | x_n \rb_\zero \asymp_\plus \dist(\zero,\geo{x_0}{x_n}) \asymp_\plus \dist(\zero,\gamma),
\]
and similarly for $\w\gamma$. So to prove \eqref{ETSgromovquasi}, we need to show that $\dist(\zero,\w\gamma) \lesssim_{\plus,\times} \dist(\zero,\gamma)$.

Fix $z\in\gamma$, and we will show that $\dox z \gtrsim_{\plus,\times} \dist(\zero,\w\gamma)$. Write $z\in \geo{x_{k - 1}}{x_k}$ for some $k = 1,\ldots,n$. By Proposition \ref{propositionrips}(i), we have
\begin{align*}
\dist(\zero,\w\gamma) &\leq \dist(\zero,\geo{\w x_{k - 1}}{\w x_k}) \asymp_\plus \lb \w x_{k - 1} | \w x_k \rb_\zero\\
\dox z &\geq \dist(\zero,\geo{x_{k - 1}}{x_k}) \asymp_\plus \lb x_{k - 1} | x_k \rb_\zero,
\end{align*}
so to complete the proof of Lemma \ref{lemmagromovquasi} it suffices to show that
\begin{equation}
\label{gromovquasi}
\lb \w x_{k - 1} | \w x_k \rb_\zero \lesssim_{\plus,\times} \lb x_{k - 1} | x_k \rb_\zero.
\end{equation}
Now, if $h_k\in F$, then $\dist(x_{k - 1},x_k) \asymp_\plus \dist(\w x_{k - 1},\w x_k) \asymp_\plus 0$, so \eqref{gromovquasi} follows from Theorem \ref{theoremGF}(ii). Thus, suppose that $h_k\in E$, and write $h_k\in G_\bp$ for some $\bp\in P$. Use the notations $g = g_1 h_1\cdots h_{k - 1}$ and $h = h_k$, so that $x_{k - 1} = g(\zero)$ and $x_k = gh(\zero)$. Then for $y = \zero, h(\zero)$ we have (see \cite[Corollary 3.4.12]{DSU}) that
\[
\lb y|\bp\rb_{g^{-1}(\zero)} \asymp_\plus \frac12[\dist(g^{-1}(\zero),y) - \busemann_\bp(g^{-1}(\zero),y)] \gtrsim_\plus \frac12\dist(g^{-1}(\zero),y) \asymp_\times \dox{g(y)},
\]
so by Gromov's inequality,
\[
\lb x_{k - 1} | x_k \rb_\zero = \lb g(\zero)|gh(\zero)\rb_\zero = \lb \zero|h(\zero)\rb_{g^{-1}(\zero)} \gtrsim_{\plus,\times} \dox{g(y)} \asymp_{\plus,\times} \dox{\w g(\w y)} \geq \lb \w x_{k - 1} | \w x_k \rb_\zero. 
\]
This demonstrates \eqref{gromovquasi} and completes the proof of Lemma \ref{lemmagromovquasi}.
\end{proof}

It follows that the map $\Phi$ sends Gromov sequences to Gromov sequences, so it induces an equivariant homeomorphism $\del\Phi:\Lambda\to \w\Lambda$. This completes the proof of Theorem \ref{theoremtukia}(i).

%%%%%%%%%%%%%%%%%%%%%%%%%%%%%%%%%%
\draftnewpage
\section{Applications to finite-dimensional ROSSONCTs}
\label{sectiontukiaROSSONCT}
%%%%%%%%%%%%%%%%%%%%%%%%%%%%%%%%%%

A particularly interesting case of Theorem \ref{theoremtukia} is when $X$ and $\w X$ are both finite-dimensional ROSSONCTs. In this case, \eqref{tukia} always holds, but \eqref{tukia2} does not; nevertheless, there is a reasonable sufficient condition for \eqref{tukia2} to hold. Specifically, we have the following:

\begin{proposition}
\label{propositionCROSSONCTquasi}
Let $X$ and $\w X$ be finite-dimensional ROSSONCTs, let $G\leq\Isom(X)$ and $\w G\leq \Isom(\w X)$ be geometrically finite groups, and let $\Phi:G\to\w G$ be a type-preserving isomorphism. Fix $\bp\in P$, and let $\w\bp = \phi(\bp)\in \Lbp(\w G)$ be the unique point such that $\Phi(G_\bp) = \w G_{\w\bp}$. Then
\begin{itemize}
\item[(i)] \eqref{tukia} holds.
\item[(ii)] Let $H\leq G_\bp$ be a nilpotent subgroup of finite index. If the underlying base fields of $X$ and $\w X$ are the same, say $\F$, and if $\rank([H,H]) = \dim_\R(\F) - 1$, then \eqref{tukia2} holds.
\end{itemize}
\end{proposition}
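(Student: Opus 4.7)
The proof rests on the standard description of parabolic stabilizers in a finite-dimensional ROSSONCT $X = \H_\F^n$: the $\Isom(X)$-stabilizer of $\bp \in \del X$ has a Heisenberg-type nilpotent unipotent radical $N_\bp$, whose Lie algebra decomposes as $V_{N_\bp} \oplus Z(N_\bp)$ with $\dim_\R Z(N_\bp) = \dim_\R(\F) - 1$. By the Margulis/Bieberbach lemma, $G_\bp$ has a finite-index torsion-free nilpotent subgroup $H$ contained in $N_\bp$. Writing $(v(h), t(h)) \in V_{N_\bp} \oplus Z(N_\bp)$ for the exponential coordinates of $h \in H$, the Cygan (Kor\'anyi) gauge $|h|_C := (\|v(h)\|^4 + \|t(h)\|^2)^{1/4}$ satisfies the classical asymptotic
\[
\dogo h = 2 \log |h|_C + O_\bp(1)
\]
from the Siegel upper-half-space model. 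Fix the analogous data with tildes on the $\w X$ side; by the type-preserving hypothesis, $\w H := \Phi(H) \leq \w G_{\w\bp}$ is a finite-index nilpotent subgroup contained in $\w N_{\w\bp}$. Both parts then reduce to comparing $|h|_C$ with $|\Phi(h)|_{\w C}$.

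For part (i), I would route through the word length. Fixing a finite generating set of $H$, Pansu's asymptotics for word metrics on finitely generated nilpotent groups of class at most $2$, combined with the embedding $H \hookrightarrow N_\bp$, give a constant $c \geq 1$ with $|h|_C^{1/c} \lesssim_\times |h|_w \lesssim_\times |h|_C^c$, and likewise for $\Phi(h) \in \w H$. Since $\Phi|_H$ is a group isomorphism of finitely generated groups, word lengths are preserved up to multiplicative constants, $|h|_w \asymp_\times |\Phi(h)|_{\w w}$. Taking logarithms and chaining the three estimates gives $\log |h|_C \asymp_{\plus,\times} \log |\Phi(h)|_{\w C}$, and hence $\dogo h \asymp_{\plus,\times,\bp} \dogo{\Phi(h)}$ on $H$. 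The extension to $G_\bp$ via its finite coset decomposition contributes only bounded additive errors, proving \eqref{tukia}.

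For part (ii), I would upgrade this to a purely additive asymptotic. Under $\rank([H,H]) = \dim_\R(\F) - 1$, the commutator lattice $[H,H]$ has full rank in $Z(N_\bp)$, so the Mal'cev completion $N$ of $H$ contains all of $Z(N_\bp)$; since $\Phi$ preserves commutator subgroups and $\w\F = \F$ makes the two ambient centers have the same dimension, the analogous identity holds on the other side. By Mal'cev's rigidity theorem, $\Phi|_H$ extends uniquely to a Lie group isomorphism $\phi: N \to \w N$. Its differential is a Lie algebra isomorphism carrying $Z(N_\bp)$ onto $Z(\w N_{\w\bp})$ and inducing a linear isomorphism on horizontal quotients; both maps are bi-Lipschitz as linear isomorphisms of finite-dimensional real normed spaces, and a direct computation in exponential coordinates yields
\[
|\Phi(h)|_{\w C} \asymp_\times |h|_C,
\]
whence $\log |\Phi(h)|_{\w C} \asymp_\plus \log |h|_C$ and $\dogo{\Phi(h)} \asymp_{\plus,\bp} \dogo h$. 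Thus \eqref{tukia2} holds with $\alpha_\bp = 1$.

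The main obstacle is verifying that $\phi$ preserves the ambient Cygan gauge up to bi-Lipschitz equivalence: the maximality hypothesis $\rank([H,H]) = \dim_\R(\F) - 1$ is precisely what forces $\phi$ to map the \emph{ambient} vertical direction $Z(N_\bp)$ onto $Z(\w N_{\w\bp})$ rather than mixing it with horizontal coordinates, and the same-base-field hypothesis ensures that the horizontal/vertical weights in the two Cygan gauges match. Without either, $\phi$ could mix coordinates of different weights, distorting Cygan gauges by direction-dependent powers --- the mechanism underlying Example \ref{exampletukia}.
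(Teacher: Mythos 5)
Your argument is correct, and for part (i) it follows essentially the paper's route: both proofs reduce $\dogo h$ to the logarithm of a polynomial-type gauge and then exploit the fact that an abstract isomorphism of finitely generated groups preserves word metrics up to multiplicative constants (the paper's Lemma \ref{lemmaCROSSONCTquasi}(i) is exactly the statement $\dogo h\asymp_{\plus,\times}0\vee\log\dist_H(e,h)$). For part (ii) you take a genuinely different route. The paper introduces the quasi-commutator $Z(H)=\Ker(\pi\circ\psi)$ and proves the refined, purely additive word-metric formula \eqref{CROSSONCTquasi2}, which reduces \eqref{tukia2} to the commensurability of $\Phi(Z(H))$ with $Z(\w H)$ (Corollary \ref{corollaryCROSSONCTquasi}(ii)); the full-rank hypothesis on $[H,H]$ then forces $Z(H)$ to be commensurable with $[H,H]$, which \emph{is} algebraically determined. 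You instead invoke Mal'cev rigidity to extend $\Phi|_H$ to a Lie group isomorphism of the Mal'cev closures and check directly that the ambient Cygan gauges are bi-Lipschitz equivalent under this extension, using that the full-rank hypothesis forces $[N,N]=Z_\bp$ and $[\w N,\w N]=\w Z_{\w\bp}$, so the differential respects the weight grading up to the harmless lower-order mixing of horizontal into vertical coordinates. Your route is shorter and more conceptual for the sufficient direction; the paper's formula buys more, namely an if-and-only-if criterion stated purely in terms of word metrics, which is what is used later to \emph{disprove} \eqref{tukia2} for the mismatched and irregular groups of Example \ref{exampletukia}.

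Two small points to tighten. First, Margulis's lemma produces a finite-index nilpotent subgroup of $G_\bp$ inside $M_\bp N_\bp$, not inside $N_\bp$; as in the paper one needs the Corlette--Iozzi projection $\psi:H_2\to N_\bp$ with $\psi(h)(\zero)=h(\zero)$ before the exponential-coordinate computations apply (this preserves $\dogo\cdot$ and commutator ranks, so nothing is lost). Second, in part (i) the two-sided polynomial comparison between word length and the \emph{ambient} Cygan gauge is not a direct consequence of Pansu's theorem when $\pi(H)$ fails to be discrete (the paper's irregular case, e.g.\ $H'$ in Example \ref{exampletukia}): Pansu/Guivarc'h compare the word metric with the intrinsic homogeneous gauge of the closure $\cl H$, whose grading need not be compatible with the ambient one. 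The polynomial bound does still hold --- this is the content of the $\gtrsim$ direction of \eqref{CROSSONCTquasi1}, proved in the paper by passing to the quotient $\cl H/\cl Z$ --- but it deserves a sentence of justification.
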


Before we begin the proof of Proposition \ref{propositionCROSSONCTquasi}, it will be necessary to understand the structure of a parabolic subgroup of $\Isom(X)$.

Let $X = \H = \H_\F^d$ be a finite-dimensional ROSSONCT, let $\bp = [(1,1,\0)]$, and let $J_\bp = \Stab(\Isom(X);\bp)$. Note that $J_\bp$ is a parabolic group in the sense of Lie theory, while it is a focal group according to the classification of \cite[\66]{DSU} (and in particular not parabolic in the sense of Definition \ref{definitionparabolic}). To study the group $J_\bp$, we use the coordinate system generated by the basis
\[
\ff_0 = (\ee_0 + \ee_1)/2, \;\; \ff_1 = \ee_1 - \ee_0, \;\; \ff_i = \ee_i \;\;\; (i = 2,\ldots,d).
\]
In this coordinate system, the sesquilinear form $B_\QQ$ takes the form
\[
B_\QQ(\xx,\yy) = \wbar x_0 y_1 + \wbar x_1 y_0 + \sum_{i = 2}^d \wbar x_i y_i,
\]
the point $\bp$ takes the form $\bp = [\ff_0]$, and the group $J_\bp$ can be written (cf. \cite[Theorem 2.3.3]{DSU}) as
\[
J_\bp = \left\{ h_{\lambda,a,\vv,\ww,m,\sigma} := \left[\begin{array}{lll}
\lambda & a & \ww^\dag\\
& \lambda^{-1} &\\
& \vv & m
\end{array}\right] \sigma^{d + 1} :
\begin{split}
\lambda > 0,\; a\in \F,\;  \vv,\ww\in \F^{d - 1},\\
m\in \SO(\F^{d - 1};\EE),\; \sigma\in\Aut(\F)
\end{split}\right\} \cap \Isom(X),
\]
where $\EE$ denotes the Euclidean quadratic form on $\F^{d - 1}$. Given $\lambda,a,\vv,\ww,m$, it is readily verified that $h_{\lambda,a,\vv,\ww,m}\in \Isom(X)$ if and only if
\[
2\lambda^{-1}\Re(a) + \|\vv\|^2 = 0 \text{ and } \lambda^{-1}\ww^\dag + \vv^\dag m = \0.
\]
Consequently, it makes sense to rewrite $J_\bp$ as
\[
J_\bp = \left\{ h_{\lambda,a,\vv,m,\sigma} := \left[\begin{array}{lll}
\lambda & a - \lambda\|\vv\|^2/2 & -\lambda\vv^\dag m\\
& \lambda^{-1} &\\
& \vv & m
\end{array}\right] \sigma^{d + 1} :
\begin{split}
\lambda > 0,\; a\in \Im(\F),\;  \vv\in \F^{d - 1},\\
m\in \SO(\F^{d - 1};\EE),\; \sigma\in\Aut(\F)
\end{split}\right\}.
\]
We can now define the \emph{Langlands decomposition} of $J_\bp$:
\begin{align*}
M_\bp &= \{h_{1,0,\0,m,\sigma} : m\in\SO(\F^{d - 1};\EE),\; \sigma\in \Aut(\F)\}\\
A_\bp &= \{h_{\lambda,0,\0,I_{d - 1},e} : \lambda > 0\}\\
N_\bp &= \{n(a,\vv) := h_{1,a,\vv,I_{d - 1},e} : a\in\Im(\F) ,\; \vv\in \F^{d - 1}\}\\
J_\bp &= M_\bp A_\bp N_\bp.
\end{align*}
We observe the following facts about the Langlands decomposition: the groups $M_\bp$ and $A_\bp$ commute with each other and normalize $N_\bp$, which is nilpotent of order at most $2$. Moreover, the subgroup $M_\bp N_\bp$ is exactly the kernel of the homomorphism $J_\bp\ni h\mapsto h'(\bp)$, where $h'$ denotes the metric derivative. Equivalently, $M_\bp N_\bp$ is the largest parabolic subgroup of $J_\bp$, where ``parabolic'' is interpreted in the sense of Definition \ref{definitionparabolic}.

Let's look a bit more closely at the internal structure of $N_\bp$. The composition law is given by
\begin{equation}
\label{compositionlaw}
n(a_1,\vv_1) n(a_2,\vv_2) = n(a_1 + a_2 + \Im B_\EE(\vv_2,\vv_1),\vv_1 + \vv_2),
\end{equation}
confirming that $N_\bp$ is nilpotent of order at most two, and that its commutator is given by
\[
Z_\bp = \{n(a,\0) : a\in\Im(\F)\}.
\]
Moreover, the map $\pi:n(a,\vv)\mapsto \vv\in \F^{d - 1}$ is a homomorphism whose kernel is $Z_\bp$.

Now let $H\leq M_\bp N_\bp$ be a discrete parabolic subgroup. By Margulis's lemma, $H$ is almost nilpotent, and so by \cite[Lemma 3.4]{CorletteIozzi}, there exist a finite index subgroup $H_2\subset H$ and a homomorphism $\psi:H_2\to N_\bp$ such that $\psi(h)(\zero) = h(\zero)$ for all $h\in H_2$. (Here $\zero = [\ee_0] = [2\ff_0 - \ff_1]$ as usual.) We then let $H_3 = \psi(H_2) \leq N_\bp$.
\begin{definition}
\label{definitionHregular}
The group $H$ is \emph{regular} if $\pi(H_3)$ is a discrete subgroup of $\F^{d - 1}$. If $H$ is regular, we define its \emph{quasi-commutator} to be the subgroup
\[
Z = Z(H) = \psi^{-1}(Z_\bp) = \Ker(\pi\circ\psi) \leq H.
\]
Note that in general, the quasi-commutator of $H$ cannot be determined from its algebraic structure; cf. Example \ref{exampletukia}. Nevertheless, since $\F^{d - 1}$ is abelian, the quasi-commutator of $H$ always contains the commutator of $H_2$.
\end{definition}

In general, if $H\leq\Isom(X)$ is a discrete parabolic subgroup, we can conjugate the fixed point of $H$ to $[(1,1,\0)]$, apply the above construction, and then conjugate back to get a subgroup $Z(H)\leq H$.

If $H$ is regular, then the quasi-commutator $Z\leq H$ can be used to give an algebraic description of the function $h\mapsto \dogo h$. Specifically, we have the following:

\begin{lemma}
\label{lemmaCROSSONCTquasi}
Let $\dist_H$ and $\dist_Z$ be any Cayley metrics on $H$ and $Z$, respectively.
\begin{itemize}
\item[(i)]
\begin{equation}
\label{CROSSONCTquasi1}
\dogo h \asymp_{\plus,\times} 0\vee \log\dist_H(e,h).
\end{equation}
\item[(ii)] If $H$ is regular, then
\begin{equation}
\label{CROSSONCTquasi2}
\dogo h \asymp_\plus \min_{z\in Z} \big(0\vee 2\log\dist_H(z,h) \vee \log\dist_Z(e,z)\big)  \all h\in H.
\end{equation}
\end{itemize}
\end{lemma}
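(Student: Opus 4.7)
The plan is to reduce both assertions to elementary computations in the step-$2$ nilpotent group $N_\bp$ via the homomorphism $\psi$. First, I would replace $H$ by the finite-index subgroup $H_2$ and identify it with $H_3:=\psi(H_2)\leq N_\bp$; since $\psi(h)(\zero)=h(\zero)$ and finite-index subgroups embed quasi-isometrically into their ambient group in any Cayley metric, both sides of \eqref{CROSSONCTquasi1} and \eqref{CROSSONCTquasi2} change only by bounded additive and multiplicative constants. So one may assume $H=H_3\leq N_\bp$ and write every $h\in H$ as $n(a,\vv)$ with $a\in\Im(\F)$ and $\vv\in\F^{d-1}$.

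Second, a direct matrix computation using \eqref{distanceinL} at $\zero=[2\ff_0-\ff_1]$ gives
\[
\cosh\dogo{n(a,\vv)}=\tfrac14\sqrt{\bigl(4+\tfrac{\|\vv\|^2}{2}\bigr)^{2}+|a|^2},
\]
and consequently
\[
\dogo{n(a,\vv)}\asymp_\plus 2\log(1+\|\vv\|)\vee\log(1+|a|).
\]
On the word-length side, since $H_3$ is a finitely generated discrete subgroup of the step-$2$ nilpotent Lie group $N_\bp$, the Bass--Guivarc'h--Pansu growth theorem for nilpotent groups supplies the Carnot--Carath\'eodory asymptotic
\[
\dist_H(e,n(a,\vv))\asymp_\times\|\vv\|+|a|^{1/2}.
\]
Taking logarithms yields $0\vee\log\dist_H(e,h)\asymp_\plus\log(1+\|\vv\|)\vee\tfrac12\log(1+|a|)$, which is exactly half of $\dogo h$ up to a bounded additive error, proving (i).

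For (ii), observe that $Z$ is an abelian group whose image under $\psi$ lies in $Z_\bp\cong\Im(\F)$, so $\dist_Z(e,n(b,\0))\asymp_\times|b|$ for any Cayley metric on $Z$. The composition law \eqref{compositionlaw} gives $hz^{-1}=n(a-b,\vv)$ for $z=n(b,\0)\in Z$, since $\Im B_\EE(\0,\vv)=0$. Combining with the two asymptotics above,
\[
0\vee 2\log\dist_H(z,h)\vee\log\dist_Z(e,z)\asymp_\plus 2\log(1+\|\vv\|)\vee\log(1+|a-b|)\vee\log(1+|b|).
\]
For every $b$, the triangle inequality forces $\max(|a-b|,|b|)\geq|a|/2$, so this right-hand side is $\geq\dogo h-O(1)$ for all $z\in Z$. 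Taking $z=e$ supplies the matching upper bound, establishing \eqref{CROSSONCTquasi2}.

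The technical heart of the argument is the nilpotent word-length estimate $\dist_H(e,n(a,\vv))\asymp_\times\|\vv\|+|a|^{1/2}$; this is precisely where regularity (discreteness of $\pi(H_3)$ in $\F^{d-1}$) is essential, as without it a finite generating set could accumulate in the abelianization and destroy the Carnot--Carath\'eodory comparison. Once this estimate is paired with the explicit hyperbolic distance formula, both (i) and (ii) follow by routine arithmetic.
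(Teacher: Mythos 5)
Your reduction to $H=H_3\leq N_\bp$ and your asymptotic $\dogo{n(a,\vv)}\asymp_\plus 0\vee 2\log\|\vv\|\vee\log|a|$ both match the paper. The gap is in your word-metric estimate $\dist_H(e,n(a,\vv))\asymp_\times\|\vv\|+|a|^{1/2}$. Bass--Guivarc'h--Pansu describes the word metric of $H$ in terms of the Carnot--Carath\'eodory metric on the Malcev completion of $H$ \emph{itself}, not of the ambient group $N_\bp$; the two agree only when $H$ is a lattice in $N_\bp$, which is not assumed here. For a concrete failure, take $H=\theta(\Z\times\Z\times\{0\})\leq N_\bp\leq\Isom(\H_\C^3)$ from Example \ref{exampletukia}: this $H$ is regular and abelian, isomorphic to $\Z^2$, so $\dist_H(e,n(ai,(y,0)))\asymp_\times |a|+|y|$, not $|y|+|a|^{1/2}$. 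This sinks your proof of (ii): with the correct word metric, the choice $z=e$ gives $0\vee 2\log\dist_H(e,h)\asymp_\plus 0\vee 2\log|a|\vee 2\log|y|$, which when $|a|$ dominates is about $2\log|a|$, i.e.\ \emph{twice} $\dogo h\asymp_\plus\log|a|\vee 2\log|y|$ --- so $z=e$ does not supply the matching upper bound, and \eqref{CROSSONCTquasi2} would fail as an additive asymptotic. The correct argument (as in the paper) must choose $z$ depending on $h$: regularity lets one write $h=fz$ where $f$ is a word of length $\lesssim_\times\|\vv\|$ in the generators projecting onto $\pi(h)$ and $z=hf^{-1}\in Z$ with $\dist_Z(e,z)\lesssim_\times|a|\vee\|\vv\|^2$, so that the central part of $h$ is charged to the $\log\dist_Z$ term (weight $1$) rather than the $2\log\dist_H$ term (weight $2$).

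A second, related problem: part (i) of the lemma carries no regularity hypothesis, but your proof of (i) rests on the same Carnot--Carath\'eodory estimate, which you yourself say requires regularity. So even granting your estimate where it is valid, (i) remains unproven for irregular $H$ (e.g.\ $H'=\theta(\Lambda\times\{0\})$ of Example \ref{exampletukia}). The paper handles this case by a separate argument: it passes to the Zariski closures $\cl H$ and $\cl Z$ in $N_\bp$, projects to the vector space $\cl H/\cl Z$ (whose image of $H$ \emph{is} discrete because $Z$ is cocompact in $\cl Z$), and runs the same ``short word plus central remainder'' decomposition there; the weaker multiplicative conclusion of (i) then tolerates the factor-of-two loss between $\log|a|$ and $2\log|a|$. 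You need some version of that argument to cover (i) in general.
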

\begin{proof}
Let $F\subset H$ be a finite set so that $H_2 F = H$, and let $H_3 = \psi(H_2)$. Then for all $h\in H$, we can write $h = h_2 f$ for some $h_2\in H_2$ and $f\in F$, and then
\begin{align*}
\dogo h &\asymp_\plus \dogo{h_2} = \dogo{\psi(h_2)}\\
\dist_H(z,h) &\asymp_\plus \dist_H(z,h_2) \asymp_\times \dist_{H_2}(z,h_2) = \dist_{H_3}(\psi(z),\psi(h_2))\\
\min_{z\in Z} \big(0\vee 2\log\dist_H(z,h) \vee \log\dist_Z(e,z)\big) & \asymp_\plus \min_{z\in \psi(Z)} \big(0\vee 2\log\dist_{H_3}(z,\psi(h_2)) \vee \log\dist_{\psi(Z)}(e,z)\big).
\end{align*}
Thus, we may without loss of generality assume that $H = H_3$, i.e. that $H\leq N_\bp$ and $Z_H = H\cap Z_\bp$. We can also without loss of generality assume that $\bp = [(1,1,\0)]$.

The following formula regarding the function $n(a,\vv)$ can be verified by direct computation (cf. \cite[(3.5)]{CorletteIozzi}):
\begin{equation}
\label{corletteiozzi}
\|n(a,\vv)\| \asymp_\plus 0\vee 2\log\|\vv\| \vee \log|a|
\end{equation}
On the other hand, iterating \eqref{compositionlaw} gives
\begin{equation}
\label{compositionlawbounds}
\begin{split}
\|\vv\| &\lesssim_\times \dist_H(e,n(a,\vv))\\
|a| &\lesssim_\times \dist_H(e,n(a,\vv))^2\\
|a| &\lesssim_\times \dist_Z(e,n(a,\0)).
\end{split}
\end{equation}
These formulas make it easy to verify the $\lesssim$ direction of \eqref{CROSSONCTquasi2}: given $h = n(a,\vv)\in H$ and $z = n(b,\0)\in Z$, we have
\begin{align*}
0\vee 2\log\dist_H(z,h) \vee \log\dist_Z(e,z)
&=_\pt 0\vee 2\log\dist_H(e,n(a - b,\vv)) \vee \log\dist_Z(e,n(b,\0))\\
&\geq_\pt 0\vee 2\log\big(\|\vv\|\vee \sqrt{|a - b|}\big)\vee \log|b|\\
&=_\pt 0\vee 2\log\|\vv\| \vee \log|a - b| \vee\log|b|\\
&\gtrsim_\plus 0\vee 2\log\|\vv\| \vee \log|a| \asymp_\plus \dogo{n(a,\vv)} = \dogo h.
\end{align*}
Setting $z = e$ yields the $\lesssim$ direction of \eqref{CROSSONCTquasi1}.

To prove the $\gtrsim$ directions, we will need the following easily verified fact:

\begin{fact}
\label{factFDVS}
If $V$ is a finite-dimensional vector space, $\Lambda\leq V$ is a discrete subgroup, and $\dist_\Lambda$ is a Cayley metric on $\Lambda$, then $\dist_\Lambda(\0,\vv) \asymp_\times \|\vv\|$ for all $\vv\in\Lambda$. Here $\|\cdot\|$ denotes any norm on $V$.
\end{fact}

To prove the $\gtrsim$ direction of \eqref{CROSSONCTquasi2}, assume that $H$ is regular, fix $h = n(a,\vv)\in H$, and let $F$ be a finite generating set for $H$. Since $H$ is regular, the group $\Lambda = \pi(H) \leq \F^{d - 1}$ is discrete. Since $\F^{d - 1}$ is a finite-dimensional vector space, Fact \ref{factFDVS} guarantees the existence of a sequence $f_1,\ldots,f_n\in F$ such that $\pi(f_1\cdots f_n) = \pi(h)$ and $n \lesssim_\times \|\vv\|$. Let $f = f_1\cdots f_n$ and let $z = hf^{-1} \in \pi^{-1}(0) = Z$, say $z = n(b,\0)$. Applying \eqref{compositionlaw} and the second equation of \eqref{compositionlawbounds}, we see that $|b| \lesssim_\times |a| \vee \|\vv\|^2 \vee n^2 \lesssim_\times |a| \vee \|\vv\|^2$. On the other hand, applying Fact \ref{factFDVS} to $Z_\bp$ gives $\dist_Z(e,z) \lesssim_\times |b|$. Thus
\begin{align*}
0\vee 2\log\dist_H(z,h) \vee \log\dist_Z(e,z) &=_\pt 0\vee 2\log\dist_H(e,f) \vee \log\dist_Z(e,z)\\
&\lesssim_\plus 0\vee 2\log(n)\vee \log|b|\\
&\lesssim_\plus 0\vee 2\log\|\vv\| \vee \log(|a| \vee \|\vv\|^2)\\
&=_\pt 0\vee 2\log\|\vv\| \vee \log|a| = \dogo h.
\end{align*}
This completes the proof of \eqref{CROSSONCTquasi2}.

To prove the $\gtrsim$ direction of \eqref{CROSSONCTquasi1}, let $\cl H$ and $\cl Z$ be the Zariski closures of $H$ and $Z$ in $N_\bp$, respectively. Then $\cl H/\cl Z$ and $\cl Z$ are abelian Lie groups, and therefore isomorphic to finite-dimensional vector spaces. Let $\w\pi:\cl H\to\cl H/\cl Z$ be the projection map. Note that $\|\w\pi(n(a,\vv))\| \lesssim_\times |a| \vee \|\vv\|$ for all $n(a,\vv)\in H$. Here $\|\cdot\|$ denotes any norm on $\cl H/\cl Z$.

Since $\cl Z$ is a vector space, the fact that $Z$ is Zariski dense in $\cl Z$ simply means that $Z$ is a lattice in $\cl Z$. In particular, $Z$ is cocompact in $\cl Z$, which implies that $\w\pi(H)$ is discrete. Fix $h = n(a,\vv)\in H$, and let $F$ be a finite generating set for $H$. Then by Fact \ref{factFDVS}, there exists a sequence $f_1,\ldots,f_n\in F$ such that $\w\pi(f_1)\cdots\w\pi(f_n) = \w\pi(h)$ and $n\lesssim_\times \|\w\pi(h)\|\lesssim_\times |a| \vee \|\vv\|$. Let $f = f_1\cdots f_n$ and let $z = h f^{-1} \in H\cap \w\pi^{-1}(0) = H\cap \cl Z = Z$, say $z = n(b,\0)$. Applying \eqref{compositionlaw} and the second equation of \eqref{compositionlawbounds}, we see that $|b| \lesssim_\times |a| \vee \|\vv\|^2 \vee n^2 \lesssim_\times |a|^2 \vee \|\vv\|^2$. On the other hand, applying Fact \ref{factFDVS} to $\cl Z$ gives $\dist_Z(e,z) \lesssim_\times |b|$. Thus
\begin{align*}
0\vee\log\dist_H(e,h) &\leq_\pt 0\vee \log\dist_H(e,f) \vee \log\dist_Z(e,z)\\
&\lesssim_\plus 0\vee \log(n)\vee \log|b|\\
&\lesssim_\plus 0\vee \log(|a|\vee \|\vv\|) \vee \log(|a|^2\vee \|\vv\|^2)\\
&\asymp_\times 0\vee 2\log\|\vv\| \vee \log|a| = \dogo h.
\end{align*}
This completes the proof of \eqref{CROSSONCTquasi1}.
\end{proof}

\begin{corollary}
\label{corollaryCROSSONCTquasi}
Let $X$ and $\w X$ be finite-dimensional ROSSONCTs, let $H\leq\Isom(X)$ and $\w H\leq \Isom(\w X)$ be parabolic groups with fixed points $\bp$ and $\w\bp$, respectively, and let $\Phi:H\to\w H$ be an isomorphism. Then
\begin{itemize}
\item[(i)] \eqref{tukia} holds.
\item[(ii)] If $H$ and $\w H$ are regular, then \eqref{tukia2} holds if and only if $\Phi(Z)$ is commensurable to $\w Z$. Here $Z = Z(H)$ and $\w Z = Z(\w H)$.
\end{itemize}
\end{corollary}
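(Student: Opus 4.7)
The plan is to apply Lemma~\ref{lemmaCROSSONCTquasi} to both $H$ and $\w H$ with generating sets matched via $\Phi$. For part (i), choose a finite generating set $F$ for $H$ and use $\w F := \Phi(F)$ as a generating set for $\w H$; then $\dist_{\w H}(\Phi(h_1),\Phi(h_2)) = \dist_H(h_1,h_2)$, so $\log\dist_H(e,h) = \log\dist_{\w H}(e,\Phi(h))$. Applying Lemma~\ref{lemmaCROSSONCTquasi}(i) to both sides gives $\dogo h \asymp_{\plus,\times} 0\vee\log\dist_H(e,h) = 0\vee\log\dist_{\w H}(e,\Phi(h)) \asymp_{\plus,\times} \dogo{\Phi(h)}$, which is \eqref{tukia}.

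For the ``if'' direction of part (ii), apply Lemma~\ref{lemmaCROSSONCTquasi}(ii) with matched generators. The two expressions of Lemma~\ref{lemmaCROSSONCTquasi}(ii) for $\dogo h$ and $\dogo{\Phi(h)}$ must be shown to be additively asymptotic. Commensurability of $\Phi(Z)$ with $\w Z$ produces a finite-index subgroup $K := \Phi(Z)\cap\w Z$ on which the Cayley metrics inherited from $\Phi(Z)$ and from $\w Z$ are bi-Lipschitz equivalent, and places $\Phi(Z)$ within bounded $\w H$-Hausdorff distance of $\w Z$. For each $z\in Z$, select $w(z)\in\w Z$ within bounded $\w H$-distance of $\Phi(z)$; the identifications $\dist_{\w Z}(e,w(z))\asymp_\times\dist_Z(e,z)$ and $\dist_{\w H}(w(z),\Phi(h)) \asymp_\plus \dist_H(z,h)$ then let one replace the minimum over $\w Z$ by the minimum over $Z$ (up to $\asymp_\plus$), yielding \eqref{tukia2} with $\alpha_\bp = 1$.

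For the ``only if'' direction, I would run a power-sequence argument based on \eqref{compositionlaw} and \eqref{corletteiozzi}. Fix $z\in Z$ with $\psi(z) = n(b,\0)$, so $\dogo z \asymp_\plus \log|b|$. After replacing $\Phi(z)$ by a power if needed to land in $\w H_2$, write $\w\psi(\Phi(z)) = n(a',\vv')$. Since $\Im B_\EE(\vv',\vv') = 0$ automatically (as $B_\EE$ is Hermitian), iterating \eqref{compositionlaw} gives $\w\psi(\Phi(z)^n) = n(na', n\vv')$, hence $\dogo{\Phi(z)^n} \asymp_\plus 0\vee 2\log(n\|\vv'\|)\vee\log(n|a'|)$, while $\dogo{z^n} \asymp_\plus \log(n|b|)$. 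For large $n$ the asymptotic ratio is $2$ if $\vv'\neq\0$ (i.e., $\Phi(z)\notin\w Z$) and $1$ if $\vv'=\0$ (i.e., $\Phi(z)\in\w Z$). Consistency of a single $\alpha_\bp$ then forces either $\Phi(Z)\subseteq\w Z$ modulo finite index (with $\alpha_\bp = 1$) or $\Phi(Z)\cap\w Z$ finite (with $\alpha_\bp = 2$); symmetrizing with $\Phi^{-1}$ upgrades the inclusion in the first case to commensurability, while the second case contradicts the containment $\Phi([H_2,H_2]) = [\Phi(H_2),\Phi(H_2)]\subseteq\w Z$ (quasi-commutators contain commutators, by the remark after Definition~\ref{definitionHregular}) whenever $[H_2,H_2]$ is infinite. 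The main obstacle is the virtually abelian case where $[H_2,H_2]$ is finite and the commutator-containment contradiction is vacuous; this requires a separate analysis comparing $Z$ and $\w Z$ as finitely generated abelian groups, using the isomorphism $\Phi$ together with the growth exponent $\alpha_\bp$ to match their ranks and finite-index structure.
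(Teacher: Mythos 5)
Parts (i) and the ``if'' half of (ii) are correct and follow the paper's own route: the paper likewise deduces (i) immediately from \eqref{CROSSONCTquasi1} with matched generating sets, and for the ``if'' direction it writes the right-hand side of \eqref{CROSSONCTquasi2} as a function $R(h,Z)$ and runs the chain $\dogo h\asymp_\plus R(h,Z)=R(\w h,\Phi(Z))\asymp_\plus R(\w h,\w Z)\asymp_\plus\dogo{\w h}$, which is exactly your replacement of the minimum over $\w Z$ by the minimum over $\Phi(Z)$ via commensurability.

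The issue is the ``only if'' direction, and you have put your finger on the right spot. The paper's argument is the same power-sequence computation you describe: assuming $[\Phi(Z):\Phi(Z)\cap\w Z]=\infty$, it produces $h\in Z$ with $\Phi(h)^n\notin\w Z$ for all $n\neq 0$, notes via \eqref{corletteiozzi} that $\dogo{h^n}\asymp_\plus\log n$ while $\dogo{\Phi(h)^n}\asymp_\plus 2\log n$, and stops, asserting that \eqref{tukia2} fails. As you observe, this discrepancy by itself only pins $\alpha_\bp=2$; to reach a contradiction one needs a second element whose growth exponent transforms with ratio at most $1$, i.e.\ an element of $H_2$ outside $Z$. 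Your fix via $\Phi([H_2,H_2])\subseteq\w Z$ handles the case where $[H_2,H_2]$ is infinite, but the residual case you flag ($H_3$ abelian, so $Z$ of finite index in $H_2$) is not something a ``separate analysis'' can dispose of: taking $H=\{n(ki,\0):k\in\Z\}$ and $\w H=\{n(0,k\vv_0):k\in\Z\}$ for some $\0\neq\vv_0\in\C^{d-1}$, both inside $\Isom(\H_\C^d)$, one gets regular parabolic groups with $Z(H)=H$ and $Z(\w H)=\{e\}$, so $\Phi(Z)$ and $\w Z$ are not commensurable, yet \eqref{tukia2} holds with $\alpha_\bp=2$. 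So your acknowledged gap is a genuine one, but it is also present (silently) in the paper's one-line conclusion: the ``only if'' implication needs the extra hypothesis that $Z$ has infinite index in $H_2$ (equivalently $\pi(H_3)\neq\{\0\}$), under which your dichotomy argument does close. Note that only the ``if'' half is used downstream, in Corollary \ref{corollarylatticequasi}, so the applications are unaffected.
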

\begin{proof}
\eqref{tukia} follows immediately from \eqref{CROSSONCTquasi1}. %, since
%\[
%0\vee \log\dist_H(e,h) \leq \min_{z\in Z} \big(0\vee 2\log\dist_H(z,h) \vee \log\dist_Z(e,z)\big) \leq 0\vee 2\log\dist_H(e,h).
%\]
Suppose that $H$ and $\w H$ are regular and that $\Phi(Z)$ is commensurable to $\w Z$. Since the right hand side of \eqref{CROSSONCTquasi2} depends on both $h$ and $Z$, let us write it as a function $R(h,Z)$. We then have
\[
\dogo h \asymp_\plus R(h,Z) = R(\w h, \Phi(Z)) \asymp_\plus R(\w h,\w Z) \asymp_\plus \|\w h\|.
\]
On the other hand, suppose that $\Phi(Z)$ and $\w Z$ are not commensurable. Without loss of generality, suppose that the index of $\Phi(Z)\cap \w Z$ in $\Phi(Z)$ is infinite. Since $\Phi(Z)$ is a finitely generated abelian group, it follows that there exists $\w h = \Phi(h)\in \Phi(Z)$ such that $\w h^n\notin \w Z$ for all $n\in\Z\butnot\{0\}$. Without loss of generality, suppose that $\w h\in \w H_2$; otherwise replace $h$ by an appropriate power. Then \eqref{corletteiozzi} implies that
\[
\|h^n\| \asymp_{\plus,h} \log(n) \text{ but } \|\w h^n\| \asymp_{\plus,h} 2\log(n).
\]
Thus \eqref{tukia2} fails along the sequence $(h_n)_1^\infty$.
\end{proof}

\begin{corollary}
\label{corollaryrealROSSONCTquasi}
In the context of Corollary \ref{corollaryCROSSONCTquasi}, if $X$ and $\w X$ are both real ROSSONCTs, then \eqref{tukia2} holds.
\end{corollary}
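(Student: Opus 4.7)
The plan is to apply Corollary \ref{corollaryCROSSONCTquasi}(ii), which reduces the problem to checking that (a) both $H$ and $\w H$ are regular, and (b) $\Phi(Z(H))$ is commensurable to $Z(\w H)$. Both points will follow from one algebraic observation together with strong discreteness.

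The key simplification in the real case is that $\Im(\R)=\{0\}$. Looking at the explicit description
\[
N_\bp = \{n(a,\vv) : a\in\Im(\F),\; \vv\in \F^{d-1}\}
\]
from the Langlands decomposition, we see that in the real case $N_\bp=\{n(0,\vv):\vv\in\R^{d-1}\}$ is abelian, and in particular the commutator subgroup
\[
Z_\bp = \{n(a,\0) : a\in\Im(\R)\}
\]
is trivial. The same holds in $\w X$.

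To check regularity of $H$: since $Z_\bp$ is trivial, the projection $\pi : N_\bp \to \R^{d-1}$ is an isomorphism, so it suffices to show that $\psi(H_2)$ is discrete in $N_\bp$. The argument I would give uses strong discreteness of $H$: since $\psi(h)(\zero)=h(\zero)$, any sequence with $\psi(h_n)\to e$ forces $h_n(\zero)\to\zero$, hence $\|h_n\|\to 0$, which by strong discreteness traps the $h_n$ in a finite set and forces $\psi(h_n)=e$ eventually. The same reasoning applies to $\w H$.

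For commensurability: since $Z_\bp$ is trivial, $Z(H)=\psi^{-1}(Z_\bp)=\Ker(\psi)$. Any $h\in\Ker(\psi)$ fixes $\zero$, since $h(\zero)=\psi(h)(\zero)=\zero$, so $\Ker(\psi)\subseteq \Stab(H;\zero)$, which is finite by strong discreteness. Hence $Z(H)$ is finite, and likewise $Z(\w H)$. Therefore $\Phi(Z(H))\leq \w H$ is also finite, and any two finite subgroups of $\w H$ are trivially commensurable (the intersection is finite and so has finite index in each). Corollary \ref{corollaryCROSSONCTquasi}(ii) then yields \eqref{tukia2}. The ``main obstacle'' in Corollary \ref{corollaryCROSSONCTquasi}(ii)—whether $\Phi$ preserves the vertical direction of the Heisenberg-type group $N_\bp$—simply disappears in the real case because there is no vertical direction.
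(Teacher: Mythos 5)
Your proof is correct and follows essentially the same route as the paper's: both reduce to Corollary \ref{corollaryCROSSONCTquasi}(ii) via the observation that $\Im(\R)=\{0\}$ makes $Z_\bp$ trivial, whence regularity of $H$ and $\w H$ and the commensurability of $\Phi(Z)$ with $\w Z$ come for free. Your write-up is in fact slightly more careful, noting that $Z(H)=\Ker(\psi)$ is a priori only finite (torsion in the stabilizer of $\zero$) rather than trivial, but finiteness already gives commensurability, so this is a refinement of, not a departure from, the paper's argument.
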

\begin{proof}
Since $\Im(\R) = \{0\}$, the group $Z_\bp$ is trivial and thus $Z$ and $\w Z$ are trivial as well; moreover, every discrete parabolic group is regular.
\end{proof}

\begin{corollary}
\label{corollarylatticequasi}
In the context of Corollary \ref{corollaryCROSSONCTquasi}, if we assume both that
\begin{itemize}
\item[(I)] $H$ is a lattice in $M_\bp N_\bp$, and that
\item[(II)] the underlying base fields $\F$ and $\w\F$ of $X$ and $\w X$ satisfy $\dim_\R(\F)\geq\dim_\R(\w\F)$,
\end{itemize}
then \eqref{tukia2} holds.
\end{corollary}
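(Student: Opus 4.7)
The plan is to reduce to Corollary~\ref{corollaryCROSSONCTquasi}(ii) by verifying that both $H$ and $\w H$ are regular and that $\Phi(Z(H))$ is commensurable with $Z(\w H)$. The central observation I would use is that the latticeness of $H$ forces $\rank([H,H])$ to be as large as possible inside $Z_\bp$, after which the dimension hypothesis will force $\F=\w\F$.

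First I would replace $H$ and $\w H$ by torsion-free nilpotent finite-index subgroups (using Margulis's lemma on the $\w H$ side) and apply $\psi$ to identify $H$ with a lattice $H_3\leq N_\bp$ and $\w H$ with a finitely generated torsion-free nilpotent subgroup $\w H_3\leq N_{\w\bp}$. Since $\ker\psi$ is finite (it is contained in the stabilizer of $\zero$), ranks are preserved throughout this reduction. Because $N_\bp$ is $2$-step nilpotent with $[N_\bp,N_\bp]=Z_\bp$, Mal'cev theory for lattices in nilpotent Lie groups gives $\rank([H_3,H_3])=\dim Z_\bp=\dim_\R(\F)-1$.

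Pushing this through the isomorphism yields $\rank([\w H_3,\w H_3])=\dim_\R(\F)-1$; on the other hand $[\w H_3,\w H_3]\leq[N_{\w\bp},N_{\w\bp}]=Z_{\w\bp}$, so this rank is bounded by $\dim_\R(\w\F)-1$. Combined with hypothesis~(II), this forces $\F=\w\F$. Consequently $[\w H_3,\w H_3]$ has full rank inside $Z_{\w\bp}$ and is a lattice there, and a fortiori $\w H_3\cap Z_{\w\bp}=Z(\w H)$ is a lattice in $Z_{\w\bp}$.

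Writing $\mathrm{Z}(\cdot)$ for the abstract center, the lattice property of $H_3$ in the $2$-step nilpotent $N_\bp$ gives $\mathrm{Z}(H_3)=H_3\cap Z_\bp=Z(H)$, so by characteristicness $\Phi(Z(H))=\mathrm{Z}(\w H_3)$. The group $Z(\w H)$ sits inside $\mathrm{Z}(\w H_3)$ because $Z_{\w\bp}$ is central in $N_{\w\bp}$, and both subgroups have rank $\dim_\R(\F)-1$ (one from the isomorphism with $Z(H)$, the other from the previous paragraph), so the containment is of finite index, giving the required commensurability. Regularity of $H_3$ is immediate, and regularity of $\w H_3$ follows since the fact that $\w H_3\cap Z_{\w\bp}$ is a lattice in $Z_{\w\bp}$ allows standard lattice theory to conclude that $\pi(\w H_3)$ is a lattice in its Mal'cev closure inside $\w\F^{\w d-1}$. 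Corollary~\ref{corollaryCROSSONCTquasi}(ii) would then yield~\eqref{tukia2}. The main obstacle is the rank comparison forcing $\F=\w\F$; once this is in hand everything else is routine bookkeeping within the Langlands decomposition.
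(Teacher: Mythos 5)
Your proposal is correct and follows essentially the same route as the paper: latticeness forces $[H,H]$ to have full rank in $Z_\bp$, the rank comparison together with hypothesis (II) forces $\dim_\R(\F) = \dim_\R(\w\F)$, and the resulting regularity and commensurability reduce the statement to Corollary \ref{corollaryCROSSONCTquasi}(ii). The only cosmetic difference is that you route the commensurability of $\Phi(Z(H))$ with $Z(\w H)$ through the abstract centers $\mathrm{Z}(H_3)$ and $\mathrm{Z}(\w H_3)$, whereas the paper uses the commutator subgroups $[H,H]$ and $[\w H,\w H]$ as the common commensurability class; both work since all of these subgroups have the same rank $\dim_\R(\F)-1$.
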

\begin{proof}
Let $H_2$, $\psi$, $H_3$, and $Z = Z(H)$ be as on page \pageref{lemmaCROSSONCTquasi}. Without loss of generality, we may assume that $H = H_3$ and $\w H = \w H_3$. Then $H$ is a lattice in $N_\bp$ and $\w H\leq \w N_{\w\bp}$.

Since $H$ is a lattice in $N_\bp$, $H$ is Zariski dense in $N_\bp$; this implies that $[H,H]$ is Zariski dense in $Z_\bp = [N_\bp,N_\bp]$. Thus, the rank of $[H,H]$ (and also of $\Phi([H,H]) = [\w H,\w H]$) is equal to $\dim_\R(\Im(\F)) = \dim_\R(\F) - 1$. Thus $\dim_\R(\w\F) - 1 = \rank([H,H]) \leq \dim(Z_\bp) = \dim_\R(\w\F) - 1$. Since by assumption $\dim_\R(\F)\geq\dim_\R(\w\F)$, equality holds. Thus $Z$ is a lattice in $Z_\bp$ and is commensurable to $[H,H]$. Similarly, $\w Z$ is a lattice in $\w Z_{\w\bp}$ and is commensurable to $[\w H,\w H]$. Thus, the groups $H$ and $\w H$ are regular. Finally, $\w Z$ is commensurable to $[\w H,\w H] = \Phi([H,H])$, which is commensurable to $\Phi(Z)$, so Corollary \ref{corollaryCROSSONCTquasi} finishes the proof.
\end{proof}

We can now prove Theorems \ref{theoremtukiaROSSONCT} and \ref{theoremxie} from the introduction:

\begin{proof}[Proof of Theorem \ref{theoremtukiaROSSONCT}]
If $G\leq\Isom(X)$ is a lattice, then every parabolic subgroup $G_\bp$ satisfies (I). Thus, combining Corollaries \ref{corollaryCROSSONCTquasi} and \ref{corollarylatticequasi} proves Theorem \ref{theoremtukiaROSSONCT}.
\end{proof}

\begin{proof}[Proof of Theorem \ref{theoremxie}]
Xie has observed that the main result of his paper generalizes to ROSSONCTs\ once one verifies that Tukia's isomorphism theorem and the Global Measure Formula both generalize to that setting (cf. \cite[p.1]{Xie}). We have just shown that Tukia's isomorphism theorem generalizes (to the present setting at least), and the Global Measure Formula has been shown to generalize by Schapira \cite[Th\'eor\`eme 3.2]{Schapira}.

Actually, we should mention a minor change that needs to be made to Xie's proof in the setting of ROSSONCTs: Since the Hausdorff and topological dimensions of the boundary of a nonreal ROSSONCT\ are not equal, at the top of \cite[p.252]{Xie} one should use Pansu's lemma \cite[Proposition 6.5]{Pansu1}, \cite[Lemma 2.3(a)]{Xie} to deduce the lower bound on the Hausdorff dimension of $\Lambda(G_2)$ (i.e. \cite[p.252, line 4]{Xie}) rather than using Szpilrajn's inequality between Hausdorff and topological dimensions (cf. \cite[p.252, lines 2-3]{Xie}).
\end{proof}

Note that in Xie's proof, quasisymmetry is used in an essential way due to his use of Pansu's lemma \cite[Corollary 7.2]{Pansu1}, \cite[Lemma 2.3]{Xie}. Thus, the fact that the stronger asymptotic \eqref{tukia2} holds in the context of Corollary \ref{corollarylatticequasi} is essential to the proof of Theorem \ref{theoremxie}. It remains to be answered whether Theorem \ref{theoremxie} holds if we drop the assumption $\dim_\R(\F) \geq \dim_\R(\w\F)$.

We end this section by giving an example of groups for which \eqref{tukia2} fails.

\begin{example}
\label{exampletukia}
Let $\H = \H_\C^3$, let $\bp = [(1,1,\0)]$, and define a homomorphism $\theta:\R^3\to N_\bp$ by $\theta(x,y,z) = n(xi ,(y,z))$, where $i = \sqrt{-1}$. Consider the parabolic groups $H,H',H''\leq N_\bp$ defined by
\begin{align*}
H &= \theta(\Z\times\Z\times\{0\})\\
H' &= \theta(\Lambda\times\{0\})\\
H'' &= \theta(\{0\}\times\Z\times\Z).
\end{align*}
In the middle equation, $\Lambda$ denotes a lattice in $\R^2$ that does not intersect the axes. Then the groups $H,H',H''$ are all isomorphic, but we will show below that \eqref{tukia2} cannot hold for any isomorphisms between them. This is accounted for in Corollary \ref{corollaryCROSSONCTquasi} as follows: The group $H'$ is irregular, so Corollary \ref{corollaryCROSSONCTquasi} does not apply; The groups $Z(H)$ and $Z(H'')$ are not almost isomorphic (the former is isomorphic to $\Z$ while the latter is isomorphic to $\{0\}$), so Corollary \ref{corollaryCROSSONCTquasi} does not apply.% The fact that \eqref{tukia2} does not hold for the isomorphisms between $H$, $H'$, and $H''$ is consistent with Corollary \ref{corollaryCROSSONCTquasi} for the following reasons:
\end{example}
\begin{proof}
Note that the function $\dogo\cdot$ is described on $\theta(\R^3)$ by
\[
\dogo{\theta(x,y,z)} \asymp_\plus 0\vee \log|x|\vee 2\log(|y|\vee|z|)
\]
(cf. \eqref{corletteiozzi}). Now let $h_1 = \theta((1,0,0)) \in H$, $h_2 = \theta((0,1,0))\in H$. Then
\[
\|h_i^n\| \asymp_\plus i \log(n);
\]
but if $\Phi$ is an isomorphism from $H$ to either $H'$ or $H''$, then
\[
\|\Phi(h_i)^n\| \asymp_\plus 2 \log(n).
\]
This demonstrates the failure of \eqref{tukia2}, as setting $h = h_1^n$ gives $\alpha_\bp = 2$ while setting $h = h_2^n$ gives $\alpha_\bp = 1$.

Next, let $\dist_{H'}$ and $\dist_{H''}$ be Cayley metrics on $H'$ and $H''$, respectively. Then for all $R\geq 1$,
\[
\sup_{\dist_{H'}(e,h') \leq R} \|h'\| \asymp_\plus 2\log(R) > \log(R) \asymp_\plus \inf_{\dist_{H'}(e,h') > R} \|h'\|.
\]
but
\[
\sup_{\dist_{H''}(e,h'') \leq R} \|h''\| \asymp_\plus \inf_{\dist_{H''}(e,h'') > R} \|h''\| \asymp_\plus 2\log(R)
\]
This demonstrates the failure of \eqref{tukia2} for any isomorphism between $H'$ and $H''$, as taking the supremum over a ball in the Cayley metric gives $\alpha_\bp = 1$, while taking the infimum over the complement of a ball in the Cayley metric gives $\alpha_\bp = 2$.
%; it follows that
%\[
%\zeta(x,y,z) := \lim_{n\to\infty} \frac{1}{\log(n)}\|\theta(x,y,z)\| = \begin{cases}
%1 & y = z = 0\\
%2 & \text{otherwise}
%\end{cases}.
%\]
%In particular, there exists $h_1,h_2\in H$ such that $\theta(h_i) = i$ (namely $h_1 = (1,0,0)$, $h_2 = (0,1,0)$). 
%
%
%then
%\begin{align*}
%\frac{\sup_H \zeta}{\inf_H \zeta} &= 2 > 1 = \frac{\sup_{H'}\zeta}{\inf_{H'}\zeta} = \frac{\sup_{H''}\zeta}{\inf_{H''}\zeta}\\
%\frac{\sup_{\R H} \zeta}{\inf_{\R H} \zeta} = \frac{\sup_{\R H''}\zeta}{\inf_{\R H''}\zeta} &= 2 > 1 = \frac{\sup_{\R H'}\zeta}{\inf_{\R H'}\zeta}\cdot
%\end{align*}
%But the quantities $\sup_\Lambda\zeta/\inf_\Lambda\zeta$ and $\sup_{\R\Lambda}\zeta/\inf_{\R\Lambda}\zeta$ are preserved under isomorphisms of lattices which satisfy \eqref{tukia2}.
\end{proof}

%\begin{proposition}
%With notation as in Theorem \ref{theoremtukia}, 
%
%\end{proposition}

%\begin{remark}
%The above proof actually shows more; namely, it shows that the equivariant boundary extensions of the isomorphisms between $H$, $H'$, and $H''$ are not quasisymmetric. Indeed, if $\phi:\Lambda\to\w\Lambda$ is a quasisymmetric equivariant boundary extension of an isomorphism $\Phi:H\to \w H$, then there exists an increasing homeomorphism $f:(0,\infty)\to(0,\infty)$ such that
%
%\end{remark}

\begin{remark}
\label{remarktukia}
The above proof actually shows more; namely, it shows that if $\Phi:G\to \w G$ is a type-preserving isomorphism so that for some $\bp\in\Lbp$, $G_\bp$ and $\w G_{\w\bp}$ are distinct elements of $\{H,H',H''\}$, then the equivariant boundary extension of $\Phi$ is not quasisymmetric.
\end{remark}
\begin{proof}
By contradiction suppose that the equivariant boundary extension $\phi:\Lambda\to\w\Lambda$ is quasisymmetric. Fix $\zeta\in \Lambda\butnot\{\bp\}$, and let $\w\zeta = \phi(\zeta)$. Then by equivariance, for each $h\in G_\bp$ we have
\[
\phi(h(\zeta)) = \w h(\w\zeta).
\]
Let $f:(0,\infty)\to(0,\infty)$ be as in Definition \ref{definitionquasisymmetric}, so that for all $\xi,\eta_1,\eta_2\in \Lambda$,
\[
\frac{\w\Dist(\w\xi,\w\eta_2)}{\w\Dist(\w\xi,\w\eta_1)} \leq f\left(\frac{\Dist(\xi,\eta_2)}{\Dist(\xi,\eta_1)}\right).
\]
Letting $\xi = \bp$ and $\eta_i = h_i(\zeta)$ gives
\[
\frac{\w\Dist(\w\bp,\w h_2(\w\zeta))}{\w\Dist(\w\bp,\w h_1(\w\zeta))} \leq f\left(\frac{\Dist(\bp,h_2(\zeta))}{\Dist(\bp,h_1(\zeta))}\right).
\]
But $\Dist(\bp,h_i(\zeta)) \asymp_{\times,\zeta} \Dist(\bp,h_i(\zero)) = e^{(1/2)\dogo{h_i}}$; thus
\[
\exp\left(\frac12 \left[\|\w h_2\| - \|\w h_1\|\right]\right) \leq f_2\exp\left(\frac12 \Big[\dogo{h_2} - \dogo{h_1}\Big]\right),
\]
where $f_2(t) = Cf(Ct)$ for some constant $C > 0$. Letting $f_3(t) = 2\log f_2(e^{(1/2)t})$ gives
\[
\|\w h_2\| - \|\w h_1\| \leq f_3(\dogo{h_2} - \dogo{h_1}).
\]
But this is readily seen to contradict the proof of Example \ref{exampletukia}.
\end{proof}

\appendix
\section{Relative hyperbolicity and geometrical finiteness}

While the results of this appendix are stated with the assumption that the maximal parabolic subgroups of the groups in question are finitely generated, it is possible that the arguments of \cite{Osin2} might be able to remove this hypothesis.

\begin{lemma}
\label{lemmarelativelyhyperbolic}
Let $X$ be a CAT(-1) space, let $G\leq\Isom(X)$ be a geometrically finite group, and suppose that the maximal parabolic subgroups of $G$ are finitely generated. Then $G$ is hyperbolic relative to the collection $\{\Stab(G;p) : p\in\Lbp(G)\}$, where $\Lbp(G)$ denotes the set of bounded parabolic points of $G$ and $\Stab(G;p)$ denotes the stabilizer of $p$ in $G$.
\end{lemma}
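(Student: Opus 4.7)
The plan is to deduce the statement from Yaman's characterization of relative hyperbolicity \cite{Yaman}, which says that if a countable group $G$ acts as a geometrically finite convergence group on a perfect compact metrizable space $M$, and if each maximal parabolic subgroup is finitely generated, then $G$ is hyperbolic relative to the (conjugacy classes of) maximal parabolic subgroups. Thus it suffices to verify that $G$ acts on the limit set $\Lambda = \Lambda(G)$ as a geometrically finite convergence group in the sense of Bowditch, and then read off the conclusion.

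The first step is to take $M = \Lambda$ and check the topological hypotheses. Assuming $G$ is nonelementary (the elementary cases are trivial, since a parabolic or lineal group is already relatively hyperbolic in a degenerate sense), the minimality Proposition \ref{propositionminimal} forces $\Lambda$ to be perfect. Compactness and metrizability of $\Lambda$ in the potentially non-proper CAT(-1) setting follow from the fact that $\Lambda$ embeds into the visual metric completion of the bordification $\bord X$ and is closed in $\del X$; in our setting strong discreteness of $G$ and Definition \ref{definitionGF} imply that the orbit $G(\zero)$ lies in a separable subspace, so that its set of accumulation points in $\del X$ inherits a compact metrizable topology from a visual metric on $\del X$ (see \cite[Chapter 3]{DSU}).

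The second step is to verify that $G$ acts on $\Lambda$ as a convergence action, i.e.\ that every infinite sequence $(g_n)$ in $G$ has a subsequence $(g_{n_k})$ and points $\xi,\eta\in\Lambda$ such that $g_{n_k}|_{\Lambda\butnot\{\eta\}} \to \xi$ uniformly on compacta. This is a direct consequence of strong discreteness together with the negative curvature: pick basepoints $g_n(\zero)$, extract a convergent subsequence in $\bord X$ with limit $\xi$, and apply the same to $g_n^{-1}(\zero)$ to obtain $\eta$; Gromov's inequality and standard CAT($-1$) dynamics (cf.\ the classification of isometries in Section \ref{sectionclassification} and the north-south dynamics lemmas in \cite[Chapter 6]{DSU}) then yield the desired uniform convergence. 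The third step is to identify dynamical conical and bounded parabolic points with their geometric counterparts. The bounded parabolic points are defined dynamically in a way that matches Definition \ref{definitionLbp} verbatim, while the equivalence between dynamical and geometric conical limit points is standard in CAT($-1$) and recorded in \cite[Chapter 12]{DSU}; combining these, the geometrical finiteness condition (GF1)$\Leftrightarrow$(GF2) (which for CAT($-1$) groups in the sense of Definition \ref{definitionGF} is proven in \cite[Chapter 12]{DSU}) says precisely that $\Lambda = \Lr(G)\cup \Lbp(G)$, i.e.\ every point of $\Lambda$ is either conical or bounded parabolic in the dynamical sense.

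With these three steps in hand, the action of $G$ on $\Lambda$ is a geometrically finite convergence action in the sense of Bowditch; the assumption that maximal parabolic subgroups are finitely generated is exactly Yaman's hypothesis, and Yaman's theorem immediately gives that $G$ is hyperbolic relative to $\{\Stab(G;p) : p\in\Lbp(G)\}$. The main obstacle I anticipate is making sure that the convergence action and compactness/metrizability steps go through without the standing properness assumption present in most of the literature on convergence groups; this will likely require a careful reference to the CAT($-1$) machinery in \cite[Chapters 3, 6, and 12]{DSU} rather than quoting classical proper results directly.
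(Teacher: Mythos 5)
Your proposal is correct and follows essentially the same route as the paper: verify that $G$ acts on $\Lambda$ as a convergence group, match the dynamical (Yaman/Bowditch) notions of conical and bounded parabolic points with the geometric ones from \cite{DSU}, invoke Yaman's theorem in the nonelementary case, and dispose of the elementary cases directly. The paper fills in the two steps you leave as ``standard'' with short explicit arguments (the convergence property via \cite[Lemma 7.4.2]{DSU} applied to triples, and the conical-point equivalence via Gromov product estimates), but the overall structure is identical.
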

\begin{proof}
By \cite[Theorem 12.4.5]{DSU}, the limit set $\Lambda$ of $G$ is compact and consists entirely of conical points and bounded parabolic points.
\begin{claim}
\label{claimappendix1}
The action of $G$ on $\Lambda$ is a convergence action (i.e. it acts discretely on the space of triples of distinct points).
\end{claim}
\begin{subproof}
If $\xi_1,\xi_2,\xi_3\in\Lambda$ are distinct points such that $g_n(\xi_i)\to \xi_i$ for some sequence $(g_n)_1^\infty$ in $G$, then applying \cite[Lemma 7.4.2]{DSU} with $y_i^{(n)} = g_n(\xi_i)$ and $x_n = g_n(\zero)$ shows that $\lim_{n\to\infty} x_n \in \{\xi_1,\xi_2\}$. But there is some pair $i,j\in\{1,2,3\}$ such that $\lim_{n\to\infty} x_n \notin \{\xi_i,\xi_j\}$, so this is a contradiction.
\end{subproof}
\begin{claim}
\label{claimappendix2}
A point $\xi\in\Lambda$ is conical (resp. bounded parabolic) according to the definitions in \cite{Yaman} if and only if it is conical (resp. bounded parabolic) with respect to the definitions in \cite{DSU}.
\end{claim}
\begin{subproof}
For the equivalence of the definitions of bounded parabolic points, see \cite[Lemma 12.3.6(A)$\Leftrightarrow$(B)]{DSU}; the compactness of closed $\xi$-bounded subsets of $\Lambda\butnot\{\xi\}$ follows from \cite[Theorem 12.4.5]{DSU}.

If $g_n(\zero)\to \xi$ radially in the sense of \cite[Definition 7.1.2]{DSU}, then for all $\eta\neq\xi$, we have $\lb \w\eta | \w\xi \rb_\zero \asymp_\plus \lb \eta|\xi\rb_{g_n(\zero)} \asymp_\plus 0$, where $\w\eta = g_n^{-1}(\eta)$ and $\w\xi \df g_n^{-1}(\xi)$. Thus $\Dist(\w\eta,\w\xi) \gtrsim_\times 1$, so by selecting $\w\theta\in\Lambda$ so that $\Dist(\w\theta,\w\eta),\Dist(\w\theta,\w\xi) \gtrsim_\times 1$, we get $(\w\xi,\w\eta,\w\theta)\in \KK$ for some compact set $\KK\subset \{(\xi,\eta,\theta)\in \Lambda^3 \text{ distinct}\}$. Then $g_n(\w\theta)\to \xi$, so $\xi$ is a conical point in the sense of \cite[third Definition on p.62]{Yaman}.

Conversely, if $\xi$ is a conical limit point in the sense of \cite[third Definition on p.62]{Yaman}, then there exist $\eta\neq\xi$, a sequence $\theta_n\to \xi$, and a sequence $(g_n)_1^\infty$ in $G$ such that for all $n$, the distances between the points $\w\xi \df g_n^{-1}(\xi)$, $\w\eta \df g_n^{-1}(\eta)$, and $\w\theta \df g_n^{-1}(\theta)$ are bounded from below. Then $\lb \xi|\eta\rb_{g_n(\zero)} \asymp_\plus \lb \w\xi | \w\eta \rb_\zero \asymp_\plus 0$, so $g_n(\zero)$ is close to the geodesic connecting $\xi$ and $\eta$. Similarly, $g_n(\zero)$ is close to the geodesic connecting $\xi$ and $\theta_n$, so since $\theta_n\to\xi$ we have $g_n(\zero)\to\xi$. So $g_n(\zero)\to\xi$ radially in the sense of \cite[Definition 7.1.2]{DSU}.
\end{subproof}

If $G$ is nonelementary, then Claims \ref{claimappendix1} and \ref{claimappendix2} allow us to apply \cite[Theorem 0.1]{Yaman}, which completes the proof. (The  quotient $G\backslash\Lbp(G)$ is always finite if $G$ is geometrically finite \cite[Observation 12.4.12]{DSU}.)

Finally, if $G$ is elementary then the lemma is easily seen to hold, either because $G$ is parabolic (in which case it is trivially hyperbolic relative to the collection $\{G\}$) or because $G$ is elliptic or elementary loxodromic (in which case $G$ is hyperbolic relative to $\{\lb\id\rb\}$).
\end{proof}

\begin{theorem}
\label{theoremtukiaappendix}
Let $X$, $\w X$ be CAT(-1) spaces, let $G\leq\Isom(X)$ and $\w G\leq\Isom(\w X)$ be two geometrically finite groups, and let $\Phi:G\to \w G$ be a type-preserving isomorphism. Suppose that the maximal parabolic subgroups of $G$ and $\w G$ are finitely generated. Then there is an equivariant homeomorphism between $\Lambda := \Lambda(G)$ and $\w\Lambda := \Lambda(\w G)$.
\end{theorem}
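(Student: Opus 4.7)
The plan is to deduce this result by combining Lemma \ref{lemmarelativelyhyperbolic} with the uniqueness of the Bowditch boundary of a relatively hyperbolic group. First, I would apply Lemma \ref{lemmarelativelyhyperbolic} to both $G$ and $\w G$: the finite-generation hypothesis on the maximal parabolic subgroups guarantees that both groups are relatively hyperbolic with respect to their families of maximal parabolic subgroups, $\mathcal{P} = \{G_p : p \in \Lbp(G)\}$ and $\w{\mathcal{P}} = \{\w G_{\w p} : \w p \in \Lbp(\w G)\}$, respectively. Since $\Phi$ is type-preserving, the observation at the start of Section \ref{sectiontukia} already tells us that $\Phi$ sends maximal parabolic subgroups bijectively and equivariantly onto maximal parabolic subgroups, so $\Phi$ is an isomorphism of peripheral structures.

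Next, I would invoke the canonicity of the Bowditch boundary: Yaman's theorem \cite[Theorem 0.1]{Yaman} characterizes relative hyperbolicity via geometrically finite convergence actions on perfect compact metrizable spaces. A consequence is that if a group $(G,\mathcal{P})$ is relatively hyperbolic, then any two geometrically finite convergence actions of $G$ on perfect compact metrizable spaces with peripheral stabilizers exactly $\mathcal{P}$ are related by a unique $G$-equivariant homeomorphism. Transporting the $\w G$-action on $\w\Lambda$ back to $G$ via $\Phi$ gives a second such action of $G$, with the same peripheral structure $\mathcal{P}$ as the action on $\Lambda$, so the uniqueness statement produces the desired $\Phi$-equivariant homeomorphism $\Lambda \to \w\Lambda$.

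To apply this uniqueness, I need to verify that the $G$-action on $\Lambda$ really is a geometrically finite convergence action in the sense of \cite{Yaman}, and similarly for $\w G$. Fortunately this is essentially already done inside the proof of Lemma \ref{lemmarelativelyhyperbolic}: Claim \ref{claimappendix1} verifies the convergence property, and Claim \ref{claimappendix2} shows that the definitions of conical and bounded parabolic points in \cite{Yaman} agree with those in \cite{DSU}; together with \cite[Theorem 12.4.5]{DSU} (compactness of $\Lambda$ and decomposition $\Lambda = \Lr \cup \Lbp$), this identifies $\Lambda$ as the Bowditch boundary of $(G,\mathcal{P})$, and likewise for $\w\Lambda$.

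The main obstacle I expect is the elementary case, where $G$ (and hence $\w G$, by type-preservation) is finite, parabolic, or elementary loxodromic; in these cases Yaman's theorem does not strictly apply because $\Lambda$ has fewer than three points and the action is not a geometrically finite convergence action in the standard sense. These cases must be handled by hand, exactly as in the last paragraph of the proof of Lemma \ref{lemmarelativelyhyperbolic}: a type-preserving isomorphism forces $\Lambda$ and $\w\Lambda$ to have the same cardinality (zero, one, or two), and the equivariant bijection is forced by the type-correspondence on fixed points. A secondary but routine check is metrizability of $\Lambda$ in the possibly non-proper CAT($-1$) setting, which follows from the standard construction of visual metrics on the Gromov boundary.
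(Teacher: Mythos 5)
Your proposal is correct and follows essentially the same route as the paper: both identify $\Lambda$ and $\w\Lambda$ with the relative (Bowditch) boundaries of $(G,\{G_p\}_{p\in\Lbp(G)})$ and $(\w G,\{\w G_{\w p}\}_{\w p\in\Lbp(\w G)})$ via Yaman's Theorem 0.1 together with the claims established in Lemma \ref{lemmarelativelyhyperbolic}, and then use the fact that the type-preserving isomorphism $\Phi$ matches up the peripheral structures to conclude. Your separate treatment of the elementary case (and the metrizability remark) corresponds to what the paper delegates to the final paragraph of the proof of Lemma \ref{lemmarelativelyhyperbolic}.
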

\begin{proof}
The arguments of Lemma \ref{lemmarelativelyhyperbolic} show that \cite[Theorem 0.1]{Yaman} applies as long as $G$ and $\w G$ are nonelementary. In this case, $\Lambda$ and $\w\Lambda$ are equivariantly homeomorphic to the boundaries of $G$ and $\w G$ relative to the collections $\{\Stab(G;p):p\in\Lbp(G)\}$ and $\{\Stab(\w G;\w p) : \w p\in\Lbp(\w G)\}$, respectively. But since $\Phi$ is type-preserving, these two sets are equivariantly homeomorphic to each other.
\end{proof}

\bibliographystyle{amsplain}

\bibliography{bibliography}

\end{document}